\newtheorem{theorem}{Theorem}[section]
\newtheorem{corollary}{Corollary}[section]
\newtheorem{proposition}{Proposition}[section]
\newtheorem{assumption}{Assumption}[section]
\theoremstyle{definition}
\newtheorem{definition}{Definition}[section]
\numberwithin{equation}{section}
\title{Asymptotics of torus equivariant Szeg\H{o} kernel on a compact CR manifold}
\author[W.-C~Shen]{Wei-Chuan Shen}
\email{shen1993@gate.sinica.edu.tw}
\address{Institute of Mathematics, Academia Sinica, 6F, Astronomy-Mathematics Building, No. 1, Sec. 4, Roosevelt Road, Taipei 10617, TAIWAN}
\date{}
\begin{document}
\begin{abstract}
For a compact CR manifold $(X,T^{1,0}X)$ of dimension $2n+1$, $n\geq 2$, admitting a $S^1\times T^d$ action, if the lattice point $(-p_1,\cdots,-p_d)\in\mathbb{Z}^{d}$ is a regular value of the
associate CR moment map $\mu$, then we establish the asymptotic expansion of the torus
equivariant Szeg\H{o} kernel $\Pi^{(0)}_{m,mp_1,\cdots,mp_d}(x,y)$ as $m\to +\infty$ under certain assumptions of the positivity of Levi
form and the torus action on $Y:=\mu^{-1}(-p_1,\cdots,-p_d)$.
\end{abstract}

\maketitle 
\tableofcontents
\section{Introduction and statement of the main results}
Let $(X,T^{1,0}X)$ be a Cauchy--Riemann (CR for short) manifold of dimension $2n+1$, and
$\Box^{(q)}_b$ be Kohn Laplacian for $(0,q)$-forms on $X$. The orthogonal projection
$\Pi^{(q)}:L^2_{(0,q)}(X)\to\ker\Box_{b}^{(q)}$ is called the Szeg\H{o} projection, and we call
its distributional kernel $\Pi^{(q)}(x,y)$ the Szeg\H{o} kernel. The study of the Szeg\H{o}
kernel is a classical subject in several complex variables and CR geometry. For example, when $X$
is the boundary of a strongly pseudoconvex domain in $\mathbb{C}^n$, $n\geq 2$, which implies that $X$ is a
strongly pseudoconvex CR manifold, Boutet de Monvel and Sj\"ostrand \cite{BS1976} proved
that when $q=0$, $\Pi^{(0)}(x,y)$ is a Fourier integral operator with complex valued phase
function. This kind of description of kernel function has profound impact in many aspects, such
as spectral theory for Toeplitz operator, geometric quantization and K\"ahler geometry
\cites{HsiaoMarinsecu2014,HsiaoMarinescu2017berezin,MaMarinescu2007,Zelditch1998,BG1981,G1995}. 

In some recent progress \cites{HsiaoHuang2017,FritschHermannHsiao2018,HsiaoMaMarinescu}, people
start
to consider CR manifolds with Lie group action $G$. The study of $G$-equivariant Szeg\H{o} kernels
is closely related to the problems of equivariant CR
embedding and geometric quantization on CR manifolds. The goal of this paper is especially to
understand the asymptotic behavior of torus equivariant Szeg\H{o} kernel. Let us briefly explain
out motivation. Within the manifolds drastically studied on this
topic,  Sasakian manifold, which is a compact, strongly pseudoconvex and torsion free CR
manifold, stands for the odd-dimensional
counter part in K{\" a}hler geometry and serves as a significant example. We say a CR manifold
$(X,T^{1,0}X)$ is a quasi-regular Sasakian manifold if it admits a CR and transversal circle action. If $X$ is
quasi-regular, in \cite{HermannHsiaoLi2017expansion} they showed that  $\dim H^0_{b,m}(X)\approx
m^n$ and the Szeg\H{o} kernel for $H^0_{b,m}(X)$ admits a full asymptotic expansion as $m\to
+\infty$, where $H^0_{b,m}(X)$ is the space of $m$-th Fourier component with respect to the circle action of the global CR
functions on $X$. We say a CR manifold $(X,T^{1,0}X)$ is an irregular Sasakian manifold if it endows with a CR transversal
$\mathbb{R}$-action, which does not come from any circle action. Suppose now $X$ is irregular and $T$ be the fundamental vector field of the
$\mathbb{R}$-action. Take a $\mathbb{R}$-invariant $L^2$-inner product on $X$ and consider the weak maximal extension of $T$ on $L^2$ functions, then in \cite{HermannHsiaoLi2017embedding} it was shown that $T$ is a self-adjoint operator, and the spectrum of $T$, denoted by
${\rm Spec}(T)$, is a countable subset in $\mathbb{R}$. Moreover, all the spectrum of $T$ are eigenvalues. On irregular Sasakian
manifolds, it is important to understand the space
$H_{b,\alpha}^0(X):=\{u\in\mathscr{C}^\infty(X):\overline{\partial}_b u=0, Tu=i\alpha u\}$, where
$\overline{\partial}_b$ denotes the tangential Cauchy--Riemann operator on $X$. Different from the
quasi-regular situation, in general it is very difficult to see which $\alpha\in{\rm Spec}(T)$
makes $\dim H_{b,\alpha}^0(X)>0$. It is revealed in \cite{HermannHsiaoLi2018} that if we sum
over $\alpha$ between $0$ and $k$ then the weigted Szeg\H{o} kernel for the space
$\bigoplus_{\substack{\alpha\in{\rm Spec}(T)\\0\leq\alpha\leq k}} H^0_{b,\alpha}(X)$ admits an
asymptotic expansion in $k$ as $k\to+\infty$. Accordingly, there are many $\alpha\in{\rm Spec}(T)$ such that
$H^0_{b,\alpha}(X)$ is non-trivial, and it is natural to fix such $\alpha$ and consider the
Szeg\H{o} kernel for the space $H^0_{b,m\alpha}(X)$ as $m\to+\infty$. This  is the motivation of
our work. In fact, in \cite{HermannHsiaoLi2017embedding}*{Section 3} they pointed out that for an
irregular Sasakian manifold $X$, the $\mathbb{R}$-action on $X$ comes from a CR torus action
denoted by $T^{d+1}$, the Reeb vector field $T$ can also be induced by $T^{d+1}$ and the spectrum
${\rm Spec}(T)=\{\mu_0 p_0+\mu_1 p_1+\cdots+\mu_d p_d:(p_0,p_1,\cdots,p_d)\in\mathbb{Z}^{d+1}\}$,
where $\mu_0,\cdots\mu_d$ are real numbers linearly independent over $\mathbb{Q}$. Hence the
problem above
is equivalent to the study of the Szeg\H{o} kernel for $H^0_{b,mp_0,mp_1,\cdots,mp_d}(X)$ as
$m\to+\infty$ for some lattice point $(p_0,p_1,\cdots,p_d)$. For simplicity, we take $p_0=1$ in
this article. Write $T^{d+1}=S^1\times T^d$. Suppose that the $S^1$-action is CR and
transversal and the $T^d$-action is CR. We prove that if the lattice point
$(-p_1,\cdots,-p_d)$ is a regular value of the associate CR moment map $\mu$, see (\ref{CR moment map}), then there is a
full asymptotic expansion of the torus equivariant Szeg\H{o} kernel as $m\to +\infty$ under certain
assumptions of the positivity of Levi form and the torus action on $Y:=\mu^{-1}(-p_1,\cdots,-p_d)$. In particular, for $\alpha:=\mu_0+\mu_1 p_1+\cdots+\mu_d p_d\in{\rm Spec}(T)$, the space
$H^0_{b,m\alpha}(X)$ is non-trivial as $m\to+\infty$. 

We now briefly introduce some notations and the main results. Let $T_0$ be the induced vector field of the $S^1$-action, and $T_1,\cdots,
T_d$ be the ones for $T^d$-action. In other words, $T_0u(x):=\left.\frac{\partial}{\partial\theta}\right|_{\theta=0}u(e^{i\theta}\circ x)$
and 
$T_ju(x):=\left.\frac{\partial}{\partial\theta_j}\middle|\right._{\theta_j=0}u\left((1,\cdots ,e^{i\theta_j},\cdots ,1)\circ x\right)$, for $j=1,\cdots,d$ and $u\in\mathscr{C}^\infty(X)$. We define $T_0$ and $T_j$ act on $(0,q)$-forms via Lie derivatives $\mathcal{L}_{T_0}$ and $\mathcal{L}_{T_j}$, $j=1,\cdots,d$, respectively. Choose $T_0$ to be the Reeb vector field on $X$, see (\ref{Reeb vector field}), and $\omega_0$
to be its dual one form, see (\ref{omega_0}). Fix a lattice point $(p_1,\cdots,p_d)\in\mathbb{Z}^d$ and any
$m\in\mathbb{N}:=\{1,2,3,\cdots\}$, we consider the space of equivariant smooth $(0,q)$-forms
$$
\Omega^{(0,q)}_{m,mp_1,\cdots,mp_d}(X):=\{u\in\Omega^{(0,q)}(X):-iT_0u=mu,-iT_ju=mp_j
u,j=1,\cdots,d\}.
$$
Since the group action is assumed to be CR, we can take the $\overline{\partial}_b$-subcomplex
$\left(\overline{\partial}_b, \Omega^{(0,\bullet)}_{m,mp_1,\cdots,mp_d}(X)\right)$ and define the
corresponding Kohn--Rossi cohomology $H^q_{b,m,mp_1,\cdots,mp_d}(X)$. Here $\overline{\partial}_b$ we mean the tangential Cauchy--Riemann operator on $X$ with respect to $T_0$, see (\ref{tangential Cauchy Riemann operator}). Let $\langle\cdot|\cdot\rangle$ be a torus invariant Hermitian metric on
$\mathbb{C}TX$ and let $(\cdot|\cdot)$ be the torus invariant $L^2$-inner product on
$\Omega^{(0,q)}(X)$ induced by $\langle\cdot|\cdot\rangle$. Let $L^2_{(0,q),m,mp_1,\cdots,mp_d}(X)$ be the completion of
$\Omega^{(0,q)}_{m,mp_1,\cdots,mp_d}(X)$ with respect to the given torus invariant $L^2$-inner product $(\cdot|\cdot)$ and take the Gaffney extension of Kohn Laplacian $\Box^{(q)}_b$ to the $L^2$-space, see (\ref{Gaffney extension}), then we have the Hodge theorem such that
$$
H^q_{b,m,mp_1,\cdots,mp_d}(X)\cong\mathcal{H}^q_{b,m,mp_1,\cdots,mp_d}(X):=L^2_{(0,q),m,mp_1,\cdots,
mp_d}(X)\cap\ker\Box^{(q)}_b\subset\Omega^{(0,q)}_{m,mp_1,\cdots,mp_d}(X),
$$
and that $H^q_{b,m,mp_1,\cdots,mp_d}(X)$ is finite dimensional for each $q=0,\cdots,n$, though the
Kohn Laplacian $\Box^{(q)}_b$ is not elliptic, and in general it may not be hypoelliptic,
neither. We use the notation
$\Pi^{(q)}_{m,mp_1,\cdots,mp_d}(x,y)$ for the torus equivariant Szeg\H{o} kernel, which is the
distribution kernel of the orthogonal projection
$\Pi^{(q)}_{m,mp_1,\cdots,mp_d}:L^2_{(0,q)}(X)\to\mathcal{H}^q_{b,m,mp_1,\cdots,mp_d}(X)$ with respect to $(\cdot|\cdot)$.

Because the group action here is CR, we can check that the one form $\omega_0$ is torus invariant. We hence consider the torus invariant CR moment
map
\begin{equation}
    \label{CR moment map}
    \mu:X\to\mathbb{R}^d,~\mu(x):=\bigg(\langle\omega_0(x),T_1(x)\rangle,\cdots,\langle\omega_0(x),T_d(x)\rangle\bigg),
\end{equation}
where we identify $(T_e T^d)^*\cong(\mathbb{R}^d)^*\cong\mathbb{R}^d$. In this work, we need
\begin{assumption}
\label{assumption 1}
The given lattice point
$(-p_1,\cdots,-p_d)\in\mathbb{Z}^d$ is a regular value of $\mu$.
\end{assumption}
\begin{assumption}
\label{assumption 2}
The torus action $S^1\times T^d$ is free near $Y$.
\end{assumption}
\begin{assumption}
\label{assumption 3}
The induced Levi form $\mathcal{L}$ is positive near the set $Y:=\mu^{-1}(-p_1,\cdots,-p_d)$
\end{assumption}
The main result in this work is as follows:
\begin{theorem}
\label{Major theorem 1}
Let $(X,T^{1,0}X)$ be a compact connected CR manifold with $2n+1$, $n\geq 2$, admitting a
$S^1\times T^d$ action, where the $S^1$-part is CR and transversal and the $T^d$-part is only required to be CR. For the lattice point $(p_1,\cdots,p_d)\in\mathbb{Z}^d$ and $T^d$-invariant CR moment map $\mu$ satisfying Assumptions \ref{assumption 1}, \ref{assumption 2} and \ref{assumption 3}, then we have the following full asymptotic expansion for the torus equivariant Szeg\H{o} kernel: On one hand, let $\Omega$ be an open set containing $Y$, then 
$$
\Pi^{(q)}_{m,mp_1,\cdots,mp_d}(x,y)=O(m^{-\infty})
$$
on $(X\setminus\Omega)\times(X\setminus\Omega)$ if $q\in\{0,\cdots,n\}$. On the other hand, for each $p\in Y$, we can find a neighborhood denoted by $D_p$, such that 
$$
\Pi^{(q)}_{m,mp_1,\cdots,mp_d}(x,y)=O(m^{-\infty})
$$
on $D_p\times D_p$ if $q\in\{1,\cdots,n\}$. Finally, on $D_p\times D_p$,
$$
\Pi^{(0)}_{m,mp_1,\cdots,mp_d}(x,y)\equiv e^{im f(x,y)}b(x,y,m)~{\rm mod}~O(m^{-\infty}).
$$
Here, the phase function $f\in\mathscr{C}^\infty(D_p\times D_p)$ satisfies ${\rm Im}f\geq 0$, $f(x,x)=0$ for all $x\in Y\cap D_p$ and $d_x f(x,x)=-\omega_0(x)$, $d_y f(x,x)=\omega_0(x)$ for all $x\in Y\cap D_p$;
also, the symbol satisfies
\[
\begin{split}
\quad
&b(x,y,m)\in S^{n-\frac{d}{2}}_{{\rm loc}}(1;D_p\times D_p)\\
&b(x,y,m)\sim\sum_{j=0}^\infty m^{n-\frac{d}{2}-j}b_j(x,y)~\text{in}~S^{n-\frac{d}{2}}_{{\rm loc}}(1;D_p\times D_p)
\end{split}
\]
where $b_j(x,y)\in\mathscr{C}^\infty(D_p\times D_p)$, $j=0,1,2,\cdots$ and $b_0(x,x)>0$ for all $x\in Y\cap D_p$.
\end{theorem}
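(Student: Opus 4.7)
The plan is to reduce the full $S^1\times T^d$-equivariant Szeg\H{o} kernel to the $S^1$-equivariant one and then carry out a complex stationary phase in the $d$ torus variables. Since the two actions commute and are both unitary for the chosen invariant $L^2$-inner product, the associated equivariant projections commute, so
\[
\Pi^{(q)}_{m,mp_1,\ldots,mp_d}(x,y) \;=\; \frac{1}{(2\pi)^d}\int_{T^d} e^{-im\langle p,\theta\rangle}\,\Pi^{(q)}_m\bigl((e^{i\theta_1},\ldots,e^{i\theta_d})\cdot x,\;y\bigr)\,d\theta_1\cdots d\theta_d,
\]
where $\Pi^{(q)}_m$ is the $m$-th $S^1$-Fourier component of the Szeg\H{o} projector. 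The problem then becomes the microlocal analysis of this oscillatory torus integral.

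Next I would invoke the $S^1$-equivariant microlocal analysis of the Szeg\H{o} kernel, in the spirit of Hsiao--Li and Hermann--Hsiao--Li. Under Assumptions \ref{assumption 2} and \ref{assumption 3} there is an open neighborhood $U$ of $Y$ on which the $S^1$-action is transversal, free, and the Levi form is positive; on $U\times U$ one then has $\Pi^{(q)}_m\equiv 0\pmod{O(m^{-\infty})}$ for $q\geq 1$, while
\[
\Pi^{(0)}_m(x,y) \;\equiv\; e^{im\varphi(x,y)}s(x,y,m)\pmod{O(m^{-\infty})}
\]
on $U\times U$, for a complex phase $\varphi$ with $\varphi|_{\Delta}=0$, $d_x\varphi(x,x)=-\omega_0(x)$, $d_y\varphi(x,x)=\omega_0(x)$, $\mathrm{Im}\,\varphi(x,y)\gtrsim |x-y|^2$, and a classical symbol $s\sim\sum_{j\geq 0}m^{n-j}s_j$ with $s_0>0$ on the diagonal. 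To pass from this local description to the full kernel appearing in the torus average one uses subellipticity of the $S^1$-Fourier mode of $\Box_b$ together with a cutoff argument to show that contributions from outside $U$ can be dropped at the price of an $O(m^{-\infty})$ error.

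Inserting this local form into the torus integral gives composite phase $\Phi(x,y,\theta):=-\langle p,\theta\rangle+\varphi((e^{i\theta_1},\ldots,e^{i\theta_d})\cdot x,y)$, whose first-order derivatives in $\theta$ satisfy
\[
\partial_{\theta_j}\Phi(x,x,0) \;=\; -p_j + d_x\varphi(x,x)\bigl(T_j(x)\bigr) \;=\; -p_j - \langle\omega_0(x),T_j(x)\rangle \;=\; -p_j-\mu_j(x),
\]
so the critical set in $\theta$ sits precisely over $Y$. Away from $\Omega\supset Y$, repeated integration by parts in $\theta$ yields the first $O(m^{-\infty})$ assertion, and the $q\geq 1$ vanishing on $D_p\times D_p$ follows from the vanishing of $\Pi^{(q)}_m$ on $U\times U$ already recorded. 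For $q=0$ near $p_0\in Y$, Assumption \ref{assumption 1} together with Assumption \ref{assumption 2} ensures that the $T^d$-Hessian of $\Phi$ at the critical point is non-degenerate (essentially because $d\mu$ is surjective along the free $T^d$-orbits), so Melin--Sj\"ostrand complex stationary phase applies in the $d$ torus variables and gives
\[
\Pi^{(0)}_{m,mp_1,\ldots,mp_d}(x,y)\;\equiv\;e^{imf(x,y)}b(x,y,m)\pmod{O(m^{-\infty})},\qquad f(x,y):=\Phi(x,y,\theta_c(x,y)),
\]
where $\theta_c$ is the (complex) critical point. The order drops from $n$ to $n-\tfrac{d}{2}$ by the usual stationary-phase gain in $d$ real variables, and the claims $f(x,x)=0$, $d_xf(x,x)=-\omega_0(x)$, and $b_0(x,x)>0$ for $x\in Y\cap D_{p_0}$ follow by direct computation at $\theta_c(x,x)=0$, using the critical-point relation to kill the $\theta$-derivatives and the positivity of $s_0$ together with non-degeneracy of the Hessian.

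The principal obstacle is the second step: establishing the local Fourier integral representation of $\Pi^{(q)}_m$ on $U\times U$ with explicit control of $\varphi$ and $s$, when the Levi form is only \emph{locally} positive. This requires building a microlocal parametrix for the $S^1$-Fourier mode of $\Box_b$ on $U$, verifying that the gluing to the non-positive region contributes only $O(m^{-\infty})$, and running the machinery of complex Fourier integral operators through both formal and analytic reductions. A secondary technical issue is the honest application of Melin--Sj\"ostrand stationary phase for a complex phase (almost-analytic extension in $\theta$, correct Hessian signature), and bookkeeping the various sign conventions so that $\mathrm{Im}\,f\geq 0$ and $d_xf(x,x)=-\omega_0$ come out as stated.
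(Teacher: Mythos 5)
Your overall architecture for the near-$Y$ analysis --- average the circle-equivariant kernel over $T^d$ against $e^{-im\langle p,\theta\rangle}$, identify the $\theta$-critical set with $Y$ via $\partial_{\theta_j}\Phi(x,x,0)=-p_j-\mu_j(x)$, check non-degeneracy of the $\theta$-Hessian from Assumptions \ref{assumption 1} and \ref{assumption 2}, and apply Melin--Sj\"ostrand --- is exactly the paper's, and the $m^{n}\to m^{n-\frac{d}{2}}$ bookkeeping is right. But there are two genuine gaps. First, your proof of the estimate on $(X\setminus\Omega)\times(X\setminus\Omega)$ by ``repeated integration by parts in $\theta$'' cannot work: that integration by parts presupposes an oscillatory representation of $\Pi^{(0)}_m((e^{i\theta_1},\ldots,e^{i\theta_d})\circ x,y)$ for $x$ outside a neighborhood of $Y$, and precisely there the Levi form may degenerate, so no Boutet--Sj\"ostrand/Hsiao--Marinescu description is available. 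The paper proves this part by a completely different, purely $L^2$ argument: on a patch where $p_1+\langle\omega_0,T_1\rangle\neq 0$, the vector field $F=T_1+\langle\omega_0,T_1\rangle T_0$ lies in $T^{1,0}X\oplus T^{0,1}X$ and acts on equivariant eigenforms as multiplication by $im(p_1+\langle\omega_0,T_1\rangle)$; combining this with the Bochner--Kodaira--Kohn estimate for $\Box_b^{(q)}$ and bootstrapping gives $\|\chi u\|=O(m^{-N})$ for every $N$, and then G\aa rding plus Sobolev embedding gives the pointwise decay. You need some such argument that does not rely on an FIO representation away from $Y$.

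Second, you work throughout with the genuine projector $\Pi^{(q)}_m$ and propose to justify its local FIO form by ``building a microlocal parametrix for the $S^1$-Fourier mode of $\Box_b$'' plus ``subellipticity''. The Kohn Laplacian here is neither elliptic nor, in general, hypoelliptic, and no such parametrix is constructed (or needed) in the paper. The paper instead replaces $\Pi^{(q)}_{m,mp_1,\cdots,mp_d}$ by the low-energy spectral projector $\Pi^{(q)}_{\leq\lambda,m,mp_1,\cdots,mp_d}$, for which the Hsiao--Marinescu theorem gives the complex FIO description near the set where the Levi form is positive; it proves all the asymptotics for the low-energy kernels first, and only then recovers the statement for the genuine kernel by showing $\dim\mathcal{H}^1_{b,\leq\lambda,m,mp_1,\cdots,mp_d}(X)=O(m^{-N})$, hence $=0$ for large $m$, so that $\overline{\partial}_b u=0$ forces $\Box^{(0)}_b u=0$ and $\Pi^{(0)}_{\leq\lambda,m,mp_1,\cdots,mp_d}=\Pi^{(0)}_{m,mp_1,\cdots,mp_d}$ for $m$ large. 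Without this detour (or an equivalent substitute) your second step is not established. A smaller point: your claim that $f(x,x)=0$, $d_xf(x,x)=-\omega_0(x)$ and $b_0(x,x)>0$ for \emph{all} $x\in Y\cap D_p$ follows ``by direct computation'' glosses over the fact that the Melin--Sj\"ostrand critical value and symbol are only canonically evaluated at the center of the chart; the paper pins down these properties along $Y\cap D_p$ by comparing the expansions obtained from two overlapping BRT charts centered at different points of $Y$.
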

Here, for continuous functions $f$ and $g$ on an open set $W\subset\mathbb{R}^N$, we use the notation $f\approx g$ on $W$ if  there is a constant $C>0$ such that $\frac{1}{C}g\leq f\leq C g$ on $W$; we refer the semi-classical notations such as
$O(m^{-\infty})$, $A=B+O(m^{-\infty})$, $S^k_{\rm loc}(1;D_p\times D_p)$, and asymptotic sums $\sim$ in $S^k_{\rm loc}$ to Section 2.3.

 Note that Theorem \ref{Major theorem 1} holds on a class of manifolds slightly more general than Sasakian manifolds, for we only assume the Levi form is positive near the submanifold $Y$ instead of being positive on the whole $X$. Also, from Theorem \ref{Major theorem 1}, we can conclude:
 \begin{corollary}
 Let $(X,T^{1,0}X)$ be an irregular Sasakian manifold and $T$ be the fundamental vector field induced by the prescribed $\mathbb{R}$-action on $X$. It is known that $\mathbb{R}$-action comes from a $S^1\times T^d$-action. Assume $T=\mu_0T_0+\mu_1 T_1+\cdots+\mu_d T_d$, where $T_0$ is the vector field induced by $S^1$-action and $T_1,\cdots,T_d$ are the vector fields induced by $T^d$-action and $\mu_0,\mu_1,\cdots,\mu_d$ are real numbers linearly independent over $\mathbb{Q}$. If the lattice point $(p_1,\cdots,p_d)\in\mathbb{Z}^d$ satisfies Assumptions \ref{assumption 1},\ref{assumption 2}, for $\alpha:=\mu_0+\mu_1 p_1+\cdots+\mu_d p_d\in {\rm Spec}(T)$, $H^0_{b,m\alpha}(X)$ is non-trivial as $m\to\infty$. Here, $H^0_{b,m\alpha}(X):=\{u\in\mathscr{C}^\infty(X):\overline{\partial}_b u=0,~Tu=im\alpha u\}$.
 \end{corollary}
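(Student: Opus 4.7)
The plan is first to establish an explicit inclusion
\[
\mathcal{H}^0_{b,m,mp_1,\ldots,mp_d}(X)\subset H^0_{b,m\alpha}(X),
\]
and then to use Theorem \ref{Major theorem 1} to show the left-hand side is non-zero for every sufficiently large $m$. Before either step, I would check that the three hypotheses of the main theorem all hold here: Assumptions \ref{assumption 1} and \ref{assumption 2} are given, while Assumption \ref{assumption 3} comes for free because the Levi form of a Sasakian manifold is positive definite on the whole of $X$.

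The inclusion is a one-line computation on the equivariant subspace. For $u\in\mathcal{H}^0_{b,m,mp_1,\ldots,mp_d}(X)$, the defining relations $-iT_0 u=mu$ and $-iT_j u=mp_j u$ together with the decomposition $T=\mu_0 T_0+\mu_1 T_1+\cdots+\mu_d T_d$ yield
\[
Tu=i\bigl(m\mu_0+m\mu_1 p_1+\cdots+m\mu_d p_d\bigr)u=im\alpha u.
\]
Since $q=0$, elements of $\mathcal{H}^0_b$ are smooth CR functions, so $u\in H^0_{b,m\alpha}(X)$; this gives the claimed inclusion.

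To see that $\mathcal{H}^0_{b,m,mp_1,\ldots,mp_d}(X)$ is non-trivial for all large $m$, I would evaluate the Szeg\H{o} kernel at a single diagonal point on $Y$. Pick any $x_0\in Y$ and let $D_{x_0}$ be the neighborhood furnished by Theorem \ref{Major theorem 1}. Because $f(x_0,x_0)=0$ for $x_0\in Y\cap D_{x_0}$, on the diagonal we get
\[
\Pi^{(0)}_{m,mp_1,\ldots,mp_d}(x_0,x_0)=b(x_0,x_0,m)+O(m^{-\infty})=b_0(x_0,x_0)\,m^{n-\frac{d}{2}}+O\bigl(m^{n-\frac{d}{2}-1}\bigr),
\]
with $b_0(x_0,x_0)>0$. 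In particular $\Pi^{(0)}_{m,mp_1,\ldots,mp_d}(x_0,x_0)\to +\infty$ as $m\to\infty$, which forces the orthogonal projection, and hence its range $\mathcal{H}^0_{b,m,mp_1,\ldots,mp_d}(X)$, to be non-zero for all sufficiently large $m$. Combined with the inclusion above, this proves the corollary.

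I do not anticipate a serious obstacle: once Theorem \ref{Major theorem 1} is available, the corollary reduces to the algebraic identity $Tu=im\alpha u$ on the equivariant subspace together with the strict positivity of $b_0$ along $Y\cap D_{x_0}$. The only minor check is that the choice of $T_0$ as Reeb vector field used in the main theorem is compatible with the decomposition of $T$ arising from the standard description of the $\mathbb{R}$-action on an irregular Sasakian manifold, but this is precisely the setup recalled in the introduction, where the $\mathbb{R}$-action is shown to come from a $S^1\times T^d$-action with $T=\mu_0 T_0+\mu_1 T_1+\cdots+\mu_d T_d$.
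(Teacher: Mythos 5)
Your proposal is correct and is essentially the argument the paper intends (the corollary is stated without proof, but the introduction makes clear it is meant to follow from the diagonal expansion of Theorem \ref{Major theorem 1} exactly as you argue, combined with the observation that $\mathcal{H}^0_{b,m,mp_1,\cdots,mp_d}(X)\subset H^0_{b,m\alpha}(X)$ and that positivity of the Levi form on all of a Sasakian manifold gives Assumption \ref{assumption 3}). One microscopic caveat: since freeness of the $S^1\times T^d$ action near $Y$ only forces $d\leq 2n$, the exponent $n-\tfrac{d}{2}$ could be zero, so the kernel need not tend to $+\infty$; but the leading term $b_0(x_0,x_0)m^{n-\frac{d}{2}}>0$ still dominates the lower-order and $O(m^{-\infty})$ terms, so the non-vanishing conclusion you actually use is unaffected.
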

 We now illustrate the strategy for the proof of the theorem. Since $X$ is NOT assumed to be strongly pseudoconvex in this paper, to establish the asymptotics of torus equivariant Szeg\H{o} kernel, we do not study Szeg\H{o} projector $\Pi^{(q)}_{m,mp_1,\cdots,mp_d}$ directly; instead. we need to consider a number $\lambda>0$ and examine the Szeg\H{o} projector for lower energy forms $\Pi^{(q)}_{\leq\lambda,m,mp_1,\cdots,mp_d}$, which is the orthogonal projection from $L^2_{(0,q)}(X)$ to $L^2_{(0,q),m,mp_1,\cdots,mp_d}(X)\cap E((-\infty,\lambda])$. Here, $E((-\infty,\lambda])$ is the spectral projector and $E$ is the spectral measure for the self-adjoint operator $\Box_b^{(q)}$ under Gaffney extension \ref{Gaffney extension}, respectively. We need to  establish the content in Theorem \ref{Major theorem 1} in the following version:
\begin{theorem}[=Theorem \ref{Main theorem 2}+\ref{Main theorem 3}]
\label{Main theorem 1}
With the same notations and assumptions used in Theorem \ref{Major theorem 1} and any fixed $\lambda>0$, on one hand, for any open set $\Omega$ containing $Y$, 
$$
\Pi^{(q)}_{\leq\lambda,m,mp_1,\cdots,mp_d}(x,y)=O(m^{-\infty})
$$
on $(X\setminus\Omega)\times(X\setminus\Omega)$ if $q\in\{0,\cdots,n\}$. On the other hand, for each $p\in Y$, we can find a neighborhood denoted by $D_p$, such that 
$$
\Pi^{(q)}_{\leq\lambda,m,mp_1,\cdots,mp_d}(x,y)=O(m^{-\infty})
$$
on $D_p\times D_p$ if $q\in\{1,\cdots,n\}$. Finally, on $D_p\times D_p$,
$$
\Pi^{(0)}_{\leq\lambda,m,mp_1,\cdots,mp_d}(x,y)\equiv e^{im f(x,y)}b(x,y,m)~{\rm mod}~O(m^{-\infty})
$$
for the same phase function $f(x,y)\in\mathscr{C}^\infty(D_p\times D_p)$ and symbol $b(x,y,m)\in S^{n-\frac{d}{2}}_{\rm loc}(1;D_p\times D_p)$ in Theorem \ref{Major theorem 1}.
\end{theorem}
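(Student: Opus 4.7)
The plan is to reduce the torus-equivariant problem to the already-understood $S^1$-equivariant case by Fourier averaging over $T^d$. Since the two group actions commute and both preserve the spectral space $L^2_{(0,q)}(X)\cap E((-\infty,\lambda])$, the low-energy torus-equivariant projector factors as the composition of the $S^1$-equivariant low-energy projector $\Pi^{(q)}_{\leq\lambda,m}$ with the isotypic projection onto the $(mp_1,\ldots,mp_d)$-character of $T^d$. At the kernel level this yields
\begin{equation*}
\Pi^{(q)}_{\leq\lambda,m,mp_1,\cdots,mp_d}(x,y)=\frac{1}{(2\pi)^d}\int_{T^d} e^{-im\sum_{j=1}^{d}p_j\theta_j}\,\Pi^{(q)}_{\leq\lambda,m}\bigl((e^{-i\theta_1},\ldots,e^{-i\theta_d})\circ x,\,y\bigr)\,d\theta,
\end{equation*}
after which every statement to be proved concerns the right-hand side.

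Near $Y$, I invoke the microlocal Hodge theory for CR transversal $S^1$-actions developed in \cite{HsiaoHuang2017, FritschHermannHsiao2018} together with Assumption \ref{assumption 3}: the Levi form is positive on a neighbourhood of $Y$, so $\Pi^{(q)}_{\leq\lambda,m}=O(m^{-\infty})$ for $q\ge 1$ on such a neighbourhood, while for $q=0$ one has a Boutet de Monvel--Sj\"ostrand type description
\begin{equation*}
\Pi^{(0)}_{\leq\lambda,m}(x,y)\equiv e^{im\varphi(x,y)}\,\tilde b(x,y,m)\pmod{O(m^{-\infty})},
\end{equation*}
with $\mathrm{Im}\,\varphi\ge 0$, $\varphi(x,x)=0$, $d_x\varphi(x,x)=-d_y\varphi(x,x)=-\omega_0(x)$, and $\tilde b\sim\sum m^{n-j}\tilde b_j$. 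Substituting this description into the integral formula leaves the complex phase
\begin{equation*}
m\Phi(\theta;x,y):=m\varphi\bigl((e^{-i\theta_1},\ldots,e^{-i\theta_d})\circ x,\,y\bigr)-m\sum_{j=1}^{d}p_j\theta_j
\end{equation*}
to be analyzed, with amplitude $\tilde b((e^{-i\theta})\circ x,y,m)$.

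I then apply complex stationary phase in the $\theta$-integral. The critical equations $\partial_{\theta_j}\Phi=0$ read $\langle d_x\varphi(\cdot,y),T_j(\cdot)\rangle=p_j$, and on the diagonal $x=y\in Y$ they reduce, via $d_x\varphi(x,x)=-\omega_0(x)$ and (\ref{CR moment map}), to the defining relation $\langle\omega_0(x),T_j(x)\rangle=-p_j$ of $Y$. Assumption \ref{assumption 1} makes this critical set clean, and Assumption \ref{assumption 2} ensures the transverse complex Hessian is non-degenerate, so the Melin--Sj\"ostrand stationary phase formula contributes a factor of $m^{-d/2}$ and produces a new phase $f(x,y)$ obtained by evaluating $\Phi$ at the critical $\theta_c(x,y)$; the prescribed values $f(x,x)=0$ and $d_x f(x,x)=-\omega_0(x)$, $d_y f(x,x)=\omega_0(x)$ on $Y\cap D_p$ follow by differentiating this substitution and using $\varphi(x,x)=0$. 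Combined with the $m^{n}$ leading order of $\tilde b$, this yields the advertised $S^{n-\frac{d}{2}}_{\mathrm{loc}}$ symbol class and the positivity of $b_0$ on the diagonal. Off $\Omega\times\Omega$ (or on $D_p\times D_p$ for $q\ge 1$), either $\Pi^{(q)}_{\leq\lambda,m}$ is already $O(m^{-\infty})$ or the phase $\Phi$ has no real critical points in $\theta$, and repeated integration by parts in $\theta$ closes the argument.

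The main obstacle I expect is verifying the hypotheses of complex stationary phase uniformly in the parameters $(x,y)\in D_p\times D_p$. One must certify that $\mathrm{Im}\,\Phi\ge 0$ on the whole parameter torus (not just near $\theta_c$), that the critical set is clean in the sense of Melin--Sj\"ostrand, and that the complex Hessian transverse to the critical manifold has the correct non-degenerate signature. This requires a local normal form for $\varphi$ off the diagonal in the CR setting together with global control provided by the freeness of the action near $Y$, and a compactness/partition-of-unity argument to assemble the local WKB description into the global symbol on $D_p\times D_p$. Once this microlocal framework is in place, the symbol computations and the positivity of $b_0(x,x)$ follow by tracking the explicit constants in the stationary phase expansion.
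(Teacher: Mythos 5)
Your treatment of the region near $Y$ is essentially the paper's argument: one writes $\Pi^{(0)}_{\leq\lambda,m,mp_1,\cdots,mp_d}$ as the $T^d$-Fourier average of the $S^1$-equivariant low-energy kernel, inserts the Boutet de Monvel--Sj\"ostrand type description valid where the Levi form is positive, identifies the critical equation $\langle\omega_0(x),T_j(x)\rangle=-p_j$ with the defining equation of $Y$, and applies Melin--Sj\"ostrand stationary phase in $\theta$, with the non-degeneracy of $\Psi''_{\theta\theta}$ coming from Assumptions \ref{assumption 1} and \ref{assumption 2} (the paper verifies this by showing the $2n\times d$ matrix $\bigl(\partial\mathring{x}'_k/\partial\theta_j\bigr)$ has rank $d$, using freeness together with $T_0\lrcorner\, d\omega_0\equiv 0$). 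The loss of $m^{d/2}$ and the positivity of $b_0$ on the diagonal come out exactly as you describe, although the paper proves $f(x,x)=0$ and $d_xf(x,x)=-\omega_0(x)$ for \emph{all} $x\in Y\cap D_p$ (not just at the center of the chart) by a separate comparison-of-expansions argument rather than by direct differentiation at the critical point.

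The genuine gap is in the first assertion, the estimate on $(X\setminus\Omega)\times(X\setminus\Omega)$. You propose to handle it by observing that either $\Pi^{(q)}_{\leq\lambda,m}$ is already $O(m^{-\infty})$ there or the phase has no real critical point in $\theta$ so integration by parts applies. Neither branch is available: Assumption \ref{assumption 3} only gives positivity of the Levi form \emph{near} $Y$, so away from $Y$ there is no Fourier integral operator representation of $\Pi^{(q)}_{\leq\lambda}$ or $\Pi^{(q)}_{\leq\lambda,m}$ to integrate by parts in (for $1\leq q\leq n-1$ there is none even under global positivity), and the $S^1$-equivariant kernel $\Pi^{(q)}_{\leq\lambda,m}$ does \emph{not} decay away from $Y$ in general --- on a globally strongly pseudoconvex $X$ it has a full expansion of size $m^{n}$ everywhere; the localization to $Y$ is produced only by the additional $T^d$-equivariance. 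The paper's proof of this part is a different mechanism entirely: for $u\in\mathcal{H}^q_{b,\leq\lambda,m,mp_1,\cdots,mp_d}(X)$ and a chart where $p_j+\langle\omega_0,T_j\rangle\neq 0$, the vector field $F:=T_j+\langle\omega_0,T_j\rangle T_0$ lies in $T^{1,0}X\oplus T^{0,1}X$ and acts on $u$ as multiplication by $im(p_j+\langle\omega_0,T_j\rangle)$, so $m^2\|\chi u\|_X^2\lesssim\|L(\chi u)\|_X^2+\|\overline{L}(\chi u)\|_X^2+O(1)$; the right-hand side is then controlled by the Bochner--Kodaira--Kohn formula (Proposition \ref{L^2 estimate of Kohn's laplacian}), giving $\|\chi u\|_X=O(m^{-N})$ by iteration, and a G\r{a}rding/Sobolev bootstrap with the elliptic operator $\Box_b^{(q)}-T_0^2$ upgrades this to pointwise $O(m^{-\infty})$ bounds, hence to the kernel estimate via (\ref{low energy reproducing kernel in x and y}). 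Without some such a priori $L^2$ argument your proposal does not establish Theorem \ref{Main theorem 2}, and that theorem is also needed as an ingredient to localize the $\theta$-integral in your near-$Y$ analysis.
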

We will see in the beginning of the Section 3 that combining the spectral property for Kohn Laplacian and Theorem \ref{Main theorem 1} for the case $q=1$, there is:
\begin{theorem}[=Theorem \ref{Main theorem 4}]
\label{Major theorem 3}
With the same notations and assumptions used in Theorem \ref{Major theorem 1}, then for any $\lambda>0$, as $m\to +\infty$,
$$
\Pi^{(0)}_{\leq\lambda,m,mp_1,\cdots,mp_d}=\Pi^{(0)}_{m,mp_1,\cdots,mp_d}.
$$
\end{theorem}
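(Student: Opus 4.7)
The plan is to reduce the equality $\Pi^{(0)}_{\leq\lambda,m,mp_1,\cdots,mp_d}=\Pi^{(0)}_{m,mp_1,\cdots,mp_d}$ to the statement that, for $m$ large, the equivariant space $L^2_{(0,0),m,mp_1,\cdots,mp_d}(X)$ contains no $\Box^{(0)}_b$-eigenform with eigenvalue in the open interval $(0,\lambda]$. The two ingredients I would use are the $q=1$ case of Theorem \ref{Main theorem 1} and the classical intertwining $\overline{\partial}_b\Box^{(0)}_b=\Box^{(1)}_b\overline{\partial}_b$.

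First I would show that $\dim\bigl(L^2_{(0,1),m,mp_1,\cdots,mp_d}(X)\cap E((-\infty,\lambda])\bigr)=O(m^{-\infty})$, and hence equals zero for all $m$ large enough. Since $Y$ is compact (as the preimage of a regular value in a compact manifold), one covers $Y$ by finitely many neighbourhoods $D_{p_1},\ldots,D_{p_N}$ supplied by Theorem \ref{Main theorem 1} and picks an open set $\Omega$ with $Y\subset\Omega$ and $\overline{\Omega}\subset\bigcup_i D_{p_i}$. Then every diagonal point $(x,x)$ lies either in $(X\setminus\Omega)\times(X\setminus\Omega)$ or in some $D_{p_i}\times D_{p_i}$, so the two estimates in Theorem \ref{Main theorem 1} for $q=1$ combine to give a uniform pointwise bound $\Pi^{(1)}_{\leq\lambda,m,mp_1,\cdots,mp_d}(x,x)=O(m^{-\infty})$ on the whole of $X$. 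Integrating on the diagonal against the torus-invariant volume form yields the dimension bound, which being a non-negative integer is forced to vanish for $m\gg 1$.

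Now take a non-zero $u\in L^2_{(0,0),m,mp_1,\cdots,mp_d}(X)$ with $\Box^{(0)}_b u=\tau u$, $\tau\in(0,\lambda]$, and set $v:=\overline{\partial}_b u$. Because the torus action is CR it commutes with $\overline{\partial}_b$, so $v\in L^2_{(0,1),m,mp_1,\cdots,mp_d}(X)$. The identity $\|v\|^2=(\overline{\partial}_b^{\,*}\overline{\partial}_b u,u)=\tau\|u\|^2$ shows $v\neq 0$, and using $\overline{\partial}_b^{\,2}=0$ one gets $\Box^{(1)}_b v=\overline{\partial}_b\overline{\partial}_b^{\,*}v=\overline{\partial}_b\Box^{(0)}_b u=\tau v$. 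Hence $v$ is a non-zero element of $L^2_{(0,1),m,mp_1,\cdots,mp_d}(X)\cap E((-\infty,\lambda])$, contradicting the first step. Thus the spectrum of $\Box^{(0)}_b$ in the equivariant $(0,0)$-subspace avoids $(0,\lambda]$ once $m$ is sufficiently large, and $\Pi^{(0)}_{\leq\lambda,m,mp_1,\cdots,mp_d}=\Pi^{(0)}_{m,mp_1,\cdots,mp_d}$ follows.

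The main obstacle I anticipate is the uniformity of the $O(m^{-\infty})$ bound on the full diagonal: Theorem \ref{Main theorem 1} is inherently local (each $D_p$ comes with its own constants, and the decay off $\Omega$ depends on $\Omega$), so one must check that the finite subcover above is actually compatible with the way the $D_p$'s are produced in Theorem \ref{Main theorem 1}, so that constants combine uniformly and integration on the diagonal is controlled by the local symbolic description. Once this book-keeping is in place, the remainder is the standard Kodaira-type spectral-gap argument, with $\overline{\partial}_b$ acting as the intertwiner between $(0,0)$ and $(0,1)$ spectral subspaces.
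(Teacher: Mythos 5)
Your proposal is correct and follows essentially the same route as the paper: both deduce $\dim\mathcal{H}^1_{b,\leq\lambda,m,mp_1,\cdots,mp_d}(X)=0$ for large $m$ by integrating the $q=1$ kernel on the diagonal (using the near-$Y$ and away-from-$Y$ estimates together with compactness of $X$), and then use the intertwining $\Box^{(1)}_b\overline{\partial}_b=\overline{\partial}_b\Box^{(0)}_b$ to kill the eigenvalues in $(0,\lambda]$. The only cosmetic difference is that you verify $\overline{\partial}_b u\neq 0$ via $\|\overline{\partial}_b u\|^2=\tau\|u\|^2$ before deriving the contradiction, whereas the paper concludes directly from $\overline{\partial}_b u=0$ that $\mu u=\overline{\partial}_b^*\overline{\partial}_b u=0$.
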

Thus, from Theorem \ref{Major theorem 3}, Theorem \ref{Main theorem 1} actually implies Theorem \ref{Major theorem 1}.

\section{Set up and notation}
In this section, we recall some basic language in CR geometry, definition and properties of torus equivariant Szeg\H{o} kernel, and tools in semi-classical analysis and microlocal analysis.
\subsection{Cauchy--Riemann manifold and Kohn Laplacian} We follow the presentation in \cite{BG1988}*{Cahpter 4} and \cite{Hsiao2010}*{Cahpter 2}. Let $X$ be a smooth orientable manifold of real dimension $2n+1,~n\geq 1$, we say $X$ is a Cauchy--Riemann manifold (CR manifold for short) if there is a subbundle
$T^{1,0}X\subset\mathbb{C}TX$, such that
\begin{enumerate}
\item $\dim_{\mathbb{C}}T^{1,0}_{p}X=n$ for any $p\in X$.
\item $T^{1,0}_p X\cap T^{0,1}_p X=\{0\}$ for any $p\in X$, where $T^{0,1}_p X:=\overline{T^{1,0}_p X}$.
\item For $V_1, V_2\in \mathscr{C}^{\infty}(X,T^{1,0}X)$, then $[V_1,V_2]\in\mathscr{C}^{\infty}(X,T^{1,0}X)$, where 
$[\cdot,\cdot]$ stands for the Lie bracket between vector fields. 
\end{enumerate}
For such subbundle $T^{1,0}X$, we call it a CR structure of the CR manifold $X$. Fix a Hermitian metric $\langle\cdot|\cdot\rangle$ on $\mathbb{C}TX$ such that $T^{1,0}X\perp T^{0,1}X$. For dimension reason and the assumption that $X$ is orientable, we can always take a non-vanishing real global vector field $T$ (Reeb vector field) such that for all $x\in X$, we have the orthogonal decomposition
\begin{equation}
    \label{Reeb vector field}
    T^{1,0}_x X\oplus T^{0,1}_x X\oplus\mathbb{C}T(x)=\mathbb{C}T_x X
\end{equation}
and $\langle T|T\rangle=1$ on $X$. Denote $\langle\cdot,\cdot\rangle$ to be the paring by duality between vector fields and differential forms, and let $\Gamma:\mathbb{C}T_x X\to \mathbb{C}T_x^*X$ be the anti-linear map given by $\langle u|v\rangle=\langle u,\Gamma v\rangle$ for $u,v\in\mathbb{C}T_x X$, then we can take the induced Hermitian metric on $\mathbb{C}T^*X$ by $\langle u|v\rangle:=\langle\Gamma^{-1}v|\Gamma^{-1}u\rangle$ for $u,v\in\mathbb{C}T^*_x X$. Put 
$$
T^{*{1,0}}X:=\Gamma(T^{1,0}X)={(T^{0,1}X\oplus\mathbb{C}T)}^{\perp}\subset\mathbb{C}T^*X,~T^{*{0,1}}X:=\overline{T^{*{1,0}}X}
$$
and 
\begin{equation}
    \label{omega_0}
    \omega_0:=-\Gamma(T),
\end{equation}
which is a globally defined non-vanishing 1-form satisfying
$$
T^{*1,0}_x X\oplus T^{*0,1}_x X\oplus\mathbb{C}\omega_0(x)=\mathbb{C}T^*_xX,~\langle\omega_0,T^{1,0}X\oplus T^{0,1}X\rangle=0
\text{ and }\langle\omega_0,T\rangle=-1.
$$
We define the Levi form, which is a globally defined $(1,1)$-form, by
$$
\mathcal{L}_x({u},\overline{v}):=\frac{1}{2i}\left\langle\omega_0(x),\left[\mathring{u},\overline{\mathring{v}}\right](x)\right\rangle,
$$
where $\mathring{u},\mathring{v}\in\mathscr{C}^\infty(X,T^{1,0} X)$ such that $\mathring{u}(x)=u\in T_x^{1,0}X$ and $\mathring{v}(x)=v\in T_x^{1,0}X$. Note that by Cartan's formula we can also express the Levi form by 
$$
\mathcal{L}_x(u,\overline{v})=\frac{-1}{2i}\left\langle d\omega_0(x),u\wedge\overline{v}\right\rangle,~u,v\in T^{1,0}_x X.
$$
In other words,
$$
\mathcal{L}_x:=\left.\frac{-1}{2i}d\omega_0(x)\right|_{T_x^{1,0}X}.
$$
Take the Hermitian metric on $\Lambda^r\mathbb{C}T^*X$ by
$$
\langle u_1\wedge\cdots \wedge u_r|v_1\wedge\cdots v_r\rangle=\det\left(\bigg(\langle u_j|u_k\rangle\bigg)_{j,k=1}^r\right),~\text{where}~u_j,v_k\in\mathbb{C}T^*X~,j,k=1,\cdots,r,
$$
and the orthogonal projection 
$$
\pi^{(0,q)}:\Lambda^q\mathbb{C}T^*X\to T^{*0,q}X:=\Lambda^q(T^{*0,1}X)
$$
with respect to this Hermitian metric. The tangential Cauchy--Riemann operator is defined to be 
\begin{equation}
    \label{tangential Cauchy Riemann operator}
    \overline{\partial}_b:=\pi^{(0,q+1)}\circ d:\mathscr{C}^{\infty}(X,T^{*0,q}X)\to\mathscr{C}^{\infty}(X,T^{*0,q+1}X).
\end{equation}
By Cartan's formula, we can check that 
$$
\overline{\partial}_b^2=0.
$$
Take the $L^2$-inner product $(\cdot|\cdot)$ on $\mathscr{C}^\infty(X,T^{*0,q}X)$ induced by $\langle\cdot|\cdot\rangle$ via
$$
(f|g):=\int_X\langle f|g\rangle dV_X,~f,g\in\mathscr{C}^\infty(X,T^{*0,q}X),
$$
where $dV_X$ is the volume form with expression 
$$
dV_X(x)=\sqrt{\det\left(\left\langle\frac{\partial}{\partial x_j}\middle|\frac{\partial}{\partial x_k}\right\rangle\right)_{j,k=1}^n}dx_1\wedge\cdots \wedge dx_{2n+1}
$$
in local coordinates $(x_1,\cdots,x_{2n+1})$, and we write $\overline{\partial}_b^*$ to denote the formal adjoint of $\overline{\partial}_b$ with respect to the $L^2$-inner product $(\cdot|\cdot)$. Denote $\Omega^{(0,q)}(X):=\mathscr{C}^{\infty}(X,T^{*0,q}X)$, then the Kohn Laplacian is the operator
$$
\Box^{(q)}_b:=\overline{\partial}_b^*\overline{\partial}_b+\overline{\partial}_b\overline{\partial}_b^*:\Omega^{(0,q)}(X)\to\Omega^{(0,q)}(X).
$$
We have the following Bochner--Kodaira--Kohn formula for $\Box^{(q)}_b$, see \cite{Hsiao2010}*{Proposition 2.3}:
\begin{theorem}
\label{Bochner formula}
Let $p\in X$, $\{e_j(x)\}_{j=1}^n$ be an orthonormal frame of $T^{*0,1}_x X$ varying smoothly with $x$ in a neighborhood of $p$, and $\{L_j(x)\}_{j=1}^n$ be the dual frame of $T_x^{0,1}X$. Then we have
$$
\Box_b^{(q)}=\sum_{j=1}^n L_j^*L_j+\sum_{j,k=1}^n(e_j\wedge e_k^{\wedge,*})[L_j,L_k^*]+\epsilon(L)+\epsilon(\overline{L})+\text{zero order terms}.
$$
Here, $\epsilon(L):=\sum_{j=1}^n a_j L_j$, $\epsilon(\overline{L}):=\sum_{j=1}^n b_j \overline{L}_j$ with smooth coefficeints $a_j$ and $b_j$ for $j,k=1,\cdots,n$. Also, for each $j,k=1,\cdots,n$, $L_j^*$ is the formal adjoint of the differential operator $L_j$, and $e_k^{\wedge,*}$ is the adjoint of $e_k\wedge$ given by $\langle e_k\wedge u| v\rangle=\langle u|e_k^{\wedge,*} v\rangle$ for $u\in T^{*0,q}X$ and $v\in T^{*0,q+1}X$.
\end{theorem}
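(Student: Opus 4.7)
The plan is to compute $\overline{\partial}_b$ and $\overline{\partial}_b^*$ in the given orthonormal frame near $p$ and then expand $\Box_b^{(q)}=\overline{\partial}_b^*\overline{\partial}_b+\overline{\partial}_b\overline{\partial}_b^*$, retaining only the second-order piece and the form-valued first-order piece explicitly while absorbing everything else into $\epsilon(L)+\epsilon(\overline{L})+\text{zero order}$. Writing a smooth $(0,q)$-form as $u=\sum_{|J|=q}u_J\,e^J$ and applying $\overline{\partial}_b=\pi^{(0,q+1)}\circ d$, a direct unpacking yields
\[
\overline{\partial}_b=\sum_{j=1}^{n}e_j\wedge L_j+A_0,\qquad \overline{\partial}_b^*=\sum_{j=1}^{n}e_j^{\wedge,*}L_j^*+B_0,
\]
where $A_0,B_0$ are zero-order operators on forms whose coefficients involve only the exterior derivatives $de_j$ of the coframe. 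The first identity follows because $\pi^{(0,q+1)}$ annihilates the $\overline{L}_k$- and Reeb-direction parts of $du_J\wedge e^J$, leaving the zero-order contribution $\pi^{(0,q+1)}(u_J\,de^J)$ inside $A_0$; the second follows by taking formal adjoints and commuting $L_j^*$ past the pointwise operator $e_j^{\wedge,*}$, which costs only a zero-order commutator absorbable into $B_0$.

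The core of the argument is the composition. Using the Clifford-type relation
\[
e_j^{\wedge,*}\,e_k\wedge\;+\;e_k\wedge\,e_j^{\wedge,*}\;=\;\delta_{jk},
\]
which is a consequence of the orthonormality of $\{e_j\}$, I would rewrite
\[
\overline{\partial}_b^*\overline{\partial}_b=\sum_{j,k}L_j^*\,e_j^{\wedge,*}e_k\wedge\,L_k+\mathrm{l.o.t.}=\sum_{j=1}^{n}L_j^*L_j-\sum_{j,k}e_k\wedge e_j^{\wedge,*}\,L_j^*L_k+\mathrm{l.o.t.},
\]
where $\mathrm{l.o.t.}$ collects first-order terms of the form $\epsilon(L)+\epsilon(\overline{L})$ together with a smooth zero-order remainder; such terms arise whenever $L_j$ or $L_j^*$ is pushed past a frame-dependent pointwise operator, or when $A_0,B_0$ meet a first-order factor. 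Renaming indices, substituting $L_k^*L_j=L_jL_k^*-[L_j,L_k^*]$, and then adding the parallel expansion
\[
\overline{\partial}_b\overline{\partial}_b^*=\sum_{j,k}e_j\wedge e_k^{\wedge,*}\,L_jL_k^*+\mathrm{l.o.t.},
\]
the two $\sum_{j,k}e_j\wedge e_k^{\wedge,*}\,L_jL_k^*$ contributions cancel, leaving
\[
\Box_b^{(q)}=\sum_{j=1}^n L_j^*L_j+\sum_{j,k=1}^n(e_j\wedge e_k^{\wedge,*})[L_j,L_k^*]+\epsilon(L)+\epsilon(\overline{L})+\text{zero order}.
\]

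The main obstacle is careful bookkeeping rather than any geometric input: each time a first-order operator $L_j$ or $L_j^*=-\overline{L}_j+(\text{smooth})$ is commuted past one of the pointwise operators $e_j\wedge$, $e_k^{\wedge,*}$, or past a coefficient of $A_0$ or $B_0$, it produces a first-order residue in $L$ or $\overline{L}$ plus a zero-order correction, and one must verify that every such residue lands cleanly inside $\epsilon(L)+\epsilon(\overline{L})+\text{zero order}$ without polluting either of the two displayed leading terms. A clean way to close the argument, avoiding an exhaustive enumeration of commutators, is to identify $\sum_j L_j^*L_j$ as the scalar principal part of $\Box_b^{(q)}$ on $(0,q)$-forms and $\sum_{j,k}(e_j\wedge e_k^{\wedge,*})[L_j,L_k^*]$ as the form-valued subprincipal piece determined by the brackets $[L_j,L_k^*]$, and then to conclude that whatever remains is automatically of order at most one in $L,\overline{L}$ plus zero order, hence of the asserted form.
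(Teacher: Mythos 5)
The paper does not actually prove this statement: it is quoted verbatim from \cite{Hsiao2010}*{Proposition 2.3}, so there is no in-paper argument to compare against. Your derivation is the standard one and is essentially correct: writing $\overline{\partial}_b=\sum_j e_j\wedge L_j+(\text{zero order})$ in the orthonormal coframe, taking adjoints, composing, and using the anticommutation relation $e_j^{\wedge,*}e_k\wedge+e_k\wedge e_j^{\wedge,*}=\delta_{jk}$ to trade $\sum_{j,k}e_j^{\wedge,*}e_k\wedge L_j^*L_k$ for $\sum_j L_j^*L_j-\sum_{j,k}e_j\wedge e_k^{\wedge,*}L_k^*L_j$, after which the $L_jL_k^*$ blocks cancel against $\overline{\partial}_b\overline{\partial}_b^*$ and leave exactly the commutator term. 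Your explicit bookkeeping also correctly accounts for the one substantive point, namely that every residual first-order term is a combination of the $L_j$ and $\overline{L}_j=-L_j^*+(\text{zero order})$ only, with no Reeb-direction derivative outside the bracket term (the $T$-component lives entirely inside $[L_j,L_k^*]$, which is how the paper later extracts the $cT_0$ term in the proof of Proposition \ref{L^2 estimate of Kohn's laplacian}). Two minor caveats: the coefficients $a_j,b_j$ produced by your composition are really bundle endomorphisms rather than scalars, though the paper's own statement is equally loose on this point; and the ``shortcut'' in your last paragraph (identifying principal and subprincipal parts and declaring the rest harmless) is weaker than your explicit computation, since the absence of a residual Reeb-direction term is precisely what needs checking and does not follow from symbol considerations alone. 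Since the explicit computation is present and does the job, neither caveat is a gap.
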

\subsection{Torus equivariant Szeg\H{o} kernel}
From now on, we assume that $X$ admits a torus action in the form of $S^1\times T^d$. Consider the vector fields 
\begin{equation*}
    T_0u(x):=\left.\frac{\partial}{\partial\theta}\right|_{\theta=0}u(e^{i\theta}\circ x)
\end{equation*}
and
\begin{equation*}
    T_ju(x):=\left.\frac{\partial}{\partial\theta_j}\middle|\right._{\theta_j=0}u\left((1,\cdots ,e^{i\theta_j},\cdots ,1)\circ x\right),
\end{equation*}
where $u\in\mathscr{C}^{\infty}(X),~x\in X$, $j=1,\cdots,d$. Note that $T_0$ and
$T_1,\cdots T_d$ are the induced vector fields of the circle action and torus
action, respectively. We also assume that the circle action here is CR and
transversal, i.e.~$T_0$ satisfies
 \begin{equation}
     \label{T_0 is CR}
      \left[T_0,\mathscr{C}^{\infty}(X,T^{1,0}X)\right]\subset\mathscr{C}^{\infty}(X,T^{1,0} X)
 \end{equation}
 and
 \begin{equation}
     \label{T_0 is transversal}
       \mathbb{C}T_0(x)\oplus T^{1,0}_x X\oplus T^{0,1}_x X=\mathbb{C}T_x X\text{ for all } x\in X;
 \end{equation}
the torus action here is also assumed to be CR, i.e.~for all $j=1,\cdots,d$, $T_j$ has the property
 \begin{equation}
     \label{T_j are CR}
      \left[T_j,\mathscr{C}^{\infty}(X,T^{1,0}X)\right]\subset\mathscr{C}^{\infty}(X,T^{1,0} X).
 \end{equation}
We choose $T_0$ to be our Reeb vector field of $X$, i.e.~$T:=T_0$. Accordingly, the cooresponding dual one form $\omega_0$ is torus invariant, because $(e^{i\theta_1},\cdots,e^{i\theta_d})\circ e^{i\theta}=e^{i\theta}\circ (e^{i\theta_1},\cdots,e^{i\theta_d})$. Benefit from the CR and transversal circle action, we have the BRT coordinates patch from Baouendi--Rothschild--Tr{\`e}ves \cite{BRT1985}*{Proposition I.1}:
\begin{theorem}
\label{BRT patch}
Assume $X$ is a CR manifold admitting a CR and transversal circle action. Fix a point
$p\in X$, then there exists $\epsilon>0$, $\delta>0$ and an open neighborhood $D:=\{ (z,\theta):|z|<\epsilon,|\theta|<\delta\}$ and local coordinates $(x_1,x_2,\cdots ,x_{2n-1},x_{2n},x_{2n+1})=(z_1,\cdots ,z_n,\theta)$ near $p$, where
$z_j:=x_{2j-1}+ix_{2j}, ~j=1,\cdots,n,~\theta:=x_{2n+1}$,
such that $(z(p),\theta(p))=(0,0)$ and the fundamental vector field induced by circle
action is $T_0=\frac{\partial}{\partial\theta}$. Also, we can find a real valued function
$\phi(z)=\sum_{j=1}^n \lambda_j|z_j|^2+O(|z|^3)\in\mathscr{C}^{\infty}(D,\mathbb{R})$, where $\{\lambda_j\}_{j=1}^n$ are eigenvalues of the Levi form of $X$ at $p$, such that
$\left\{Z_j:=\frac{\partial}{\partial z_j}+i\frac{\partial\phi(z)}{\partial z_j}\frac{\partial}{\partial\theta}\right\}_{j=1}^n$ forms a basis of $T^{1,0}_x X$ $\text{for all }x\in D$. Moreover, we can take $\delta=\pi$ when the action at the point
$p$ is free.
\end{theorem}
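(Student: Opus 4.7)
The plan is to follow the classical Baouendi--Rothschild--Trèves construction in four stages. First, the transversality assumption makes $T_0$ nowhere vanishing near $p$, so the flow-box theorem for smooth vector fields gives a chart $(y_1,\ldots,y_{2n},\theta)$ centered at $p$ with $T_0=\partial/\partial\theta$; I set $S_0:=\{\theta=0\}$, a real $2n$-dimensional slice transverse to $T_0$.

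In the second stage I equip $S_0$ with an integrable almost complex structure. Projecting $T^{1,0}X$ onto $\mathbb{C}TS_0$ along $\mathbb{C}T_0$ yields a rank-$n$ complex subbundle $V\subset\mathbb{C}TS_0$ with $V\cap\bar V=0$; this projection is $T_0$-equivariant thanks to the CR condition $[T_0,T^{1,0}X]\subset T^{1,0}X$. To verify the involutivity $[V,V]\subset V$, I lift local sections of $V$ to $T_0$-invariant sections of $T^{1,0}X$ by pushing them along the $T_0$-flow; their brackets remain $T_0$-invariant and still lie in $T^{1,0}X$, hence project back to sections of $V$. Applying Newlander--Nirenberg on $S_0$ produces holomorphic coordinates $z_1,\ldots,z_n$ with $z(p)=0$, and extending them by the $T_0$-flow yields a chart $(z_1,\ldots,z_n,\theta)$ in which $T_0=\partial/\partial\theta$ and, at $\theta=0$, $T^{1,0}X$ is a graph over $\mathrm{span}(\partial/\partial z_1,\ldots,\partial/\partial z_n)$.

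In the third stage I produce the real potential $\phi$. Any local frame of $T^{1,0}X$ in this chart has the form $Z_j=\partial/\partial z_j+c_j(z,\bar z,\theta)\,\partial/\partial\theta$. The $S^1$-invariance of $T^{1,0}X$ together with $[T_0,\partial/\partial z_j]=0$ gives $[T_0,Z_j]=(\partial_\theta c_j)\,\partial/\partial\theta\in T^{1,0}X$, and since $T^{1,0}X$ misses the $\partial/\partial\theta$-line this forces $\partial_\theta c_j=0$. Involutivity then yields $[Z_j,Z_k]=(\partial_{z_j}c_k-\partial_{z_k}c_j)\,\partial/\partial\theta\in T^{1,0}X$, hence $\partial_{z_j}c_k=\partial_{z_k}c_j$; by the Poincaré lemma in $z$ (with $\bar z$ as parameter) there is a smooth $\psi(z,\bar z)$ with $\partial_{z_k}\psi=c_k$. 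Writing $\psi=g+i\phi$ with $g,\phi$ real, the coordinate change $\theta\mapsto\theta-g(z,\bar z)$ preserves $T_0=\partial/\partial\theta$ and replaces $c_k$ by $c_k-\partial_{z_k}g=i\partial_{z_k}\phi$, producing the required frame $Z_j=\partial/\partial z_j+i(\partial\phi/\partial z_j)\,\partial/\partial\theta$.

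The fourth stage normalizes $\phi$ and matches its quadratic part to the Levi form. The potential $\psi$ is unique modulo addition of a holomorphic function $h(z)$, which shifts $\phi$ by the pluriharmonic function $\mathrm{Im}\,h$; this freedom kills the constant, linear, and purely holomorphic and antiholomorphic quadratic Taylor terms of $\phi$ at $0$, leaving a Hermitian quadratic form that a unitary change of $z$ diagonalizes to $\sum\lambda_j|z_j|^2$. A direct computation from $\omega_0=-d\theta+i(\partial-\bar\partial)\phi$ gives $-\tfrac{1}{2i}d\omega_0=\partial\bar\partial\phi$, so the $\lambda_j$ are indeed the eigenvalues of the Levi form at $p$. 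The claim $\delta=\pi$ in the free case follows from the standard $S^1$-slice theorem, which lets me enlarge the $\theta$-interval to $(-\pi,\pi)$ when the action at $p$ is free. The most delicate step is the third: integrating a complex 1-form to a real potential $\phi$ via a real gauge function $g$ requires separating the real and imaginary parts of $\psi$ cleanly after applying Poincaré, and it is here that the $S^1$-invariance and the CR integrability mesh to give a canonical form.
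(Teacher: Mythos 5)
Your construction is correct and is essentially the argument of Baouendi--Rothschild--Tr\`eves, which the paper simply cites (\cite{BRT1985}*{Proposition I.1}) without reproducing a proof: flow-box for $T_0$, an integrable complex structure on the slice $\{\theta=0\}$ via Newlander--Nirenberg, a $\theta$-independent graph frame for $T^{1,0}X$ (the graph form at $\theta\neq 0$ following from flow-invariance of $T^{1,0}X$ and of the coordinate fields), a $\partial$-potential, and a gauge change in $\theta$. Two harmless slips: the potential $\psi$ is unique modulo \emph{anti}holomorphic functions (those annihilated by every $\partial_{z_k}$), which still only shifts $\phi$ by the imaginary part of a holomorphic function, so the normalization of the Taylor expansion goes through; and the ``Poincar\'e lemma in $z$ with $\bar z$ as parameter'' should be read as the conjugate of the Dolbeault--Grothendieck lemma applied to the $\partial$-closed $(1,0)$-form $\sum_k c_k\,dz_k$, since $\partial_{z_k}$ does not freeze any real variables.
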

By (\ref{T_0 is CR}) and (\ref{T_j are CR}), we can check that $\overline{\partial}_b T_j=T_j\overline{\partial}_b$ for all $j=0,1,\cdots,d$. For any given lattice point $(p_1,\cdots,p_d)\in\mathbb{Z}^d$, we put
$$
\Omega^{(0,q)}_{m,mp_1,\cdots,mp_d}(X):=\{u\in\Omega^{(0,q)}(X):-iT_0u=mu,-iT_ju=mp_ju,~j=1,\cdots,d\}.
$$
For $\overline{\partial}_b:\Omega^{(0,q)}_{m,mp_1,\cdots,mp_d}(X)\to \Omega^{(0,q+1)}_{m,mp_1,\cdots,mp_d}(X)$, we can take the $\overline{\partial}_b$-subcomplex $\left(\overline{\partial}_b,\Omega^{(0,q)}_{m,mp_1,\cdots,mp_d}(X)\right)$
and the corresponding Kohn--Rossi cohomology
$$
H^q_{b,m,mp_1,\cdots,mp_d}(X):=\frac{\ker\overline{\partial}_b:\Omega^{(0,q)}_{m,mp_1,\cdots,mp_d}(X)\to\Omega^{(0,q+1)}_{m,mp_1,\cdots,mp_d}(X)}{{\rm Im}\overline{\partial}_b:\Omega^{(0,q-1)}_{m,mp_1,\cdots,mp_d}(X)\to\Omega^{(0,q)}_{m,mp_1,\cdots,mp_d}(X)}
$$
From now on, we pick a $S^1\times T^d$-invariant Hermitian metric $\langle\cdot|\cdot\rangle$ on $\mathbb{C}TX$. Take the restriction $\overline{\partial}_{b,m,mp_1,\cdots,mp_d}:=\left.\overline{\partial}_b\right.|_{\Omega^{(0,q)}_{m,mp_1,\cdots,mp_d}(X)}$, then we can check that the formal adjoint of $\overline{\partial}_{b,m,mp_1,\cdots,mp_d}$ satisfies
$$
\overline{\partial}_{b,m,mp_1,\cdots,mp_d}^*=\left.\overline{\partial}_b^*\right.|_{\Omega^{(0,q+1)}_{m,mp_1,\cdots,mp_d}(X)}:{\Omega^{(0,q+1)}_{m,mp_1,\cdots,mp_d}(X)}\to {\Omega^{(0,q)}_{m,mp_1,\cdots,mp_d}(X)}.
$$
So we can consider the Fourier component of Kohn Laplacian by
$$
\Box^{(q)}_{b,m,mp_1,\cdots,mp_d}:=\left.\Box^{(q)}_b\right|_{\Omega^{(0,q)}_{m,mp_1,\cdots,mp_d}(X)}:\Omega^{(0,q)}_{m,mp_1,\cdots,mp_d}(X)\to\Omega^{(0,q)}_{m,mp_1,\cdots,mp_d}(X).
$$

We pause here for a while to handle some issue on extending Kohn Laplacian to $L^2$-space as a self-adjoint operator. Let $L^2_{(0,q)}(X)$ to be the completion of $\Omega^{(0,q)}(X)$ with respect to the torus invariant $L^2$-inner product $(\cdot|\cdot)$ induced by $\langle\cdot|\cdot\rangle$. Denote $\|u\|_X^2:=(u|u)$. Define the weak maximal extension of $\Box_b^{(q)}$ by
\begin{equation}
    \label{weak maximal extension}
    \begin{split}
            ~
            &{\rm Dom}(\Box^{(q)}_{b,{\rm max}}):=\{u\in L^2_{(0,q)}(X):\Box^{(q)}_b u\in L^2_{(0,q)}(X)~\text{in the distribution sense}\},\\
            & \Box_{b,{\rm max}}^{(q)}u=\Box_b^{(q)}u~\text{in distribution sense, for all}~u\in{\rm Dom}(\Box_{b,{\rm max}}^{(q)})
    \end{split}
\end{equation}
as in \cite{MaMarinescu2007}*{Section 3.1}. For such extension, $\Box_b^{(q)}$ may not be a self-adjoint operator, because it is non-elliptic, and it could also be non-hypoelliptic. So in general we have to consider the Gaffney extension as in \cite{MaMarinescu2007}*{Proposition 3.1.2}, and this extension can make $\Box_b^{(q)}$ to be self-adoint. Precisely, take the maximal extension
$\overline{\partial}_b:=\overline{\partial}_{b,{\rm max}}$ and the Hilbert adjoint of $\overline{\partial}_b$ on the $L^2$-space by
$$
\overline{\partial}_{b,H}^*:{\rm Dom}(\overline{\partial}_{b,H}^*)\subset L^2_{(0,q+1)}(X)\to  L^2_{(0,q)}(X),
$$
where the domain is given by
$$
{\rm Dom}(\overline{\partial}_{b,H}^*):=\{v\in L^2_{(0,q)}(X):\text{for all}~u\in {\rm Dom}(\overline{\partial}_b),~\text{the operator}~u\mapsto (\overline{\partial}_b u|v)~\text{is bounded linear}\}.
$$
By Riesz representation theorem, for all $v\in {\rm Dom}(\overline{\partial}^*_{b,H})$, there is a $w\in L^2_{(0,q)}(X)$ such that $(\overline{\partial}_b u|v)=(u|w)$ for all $u\in{\rm Dom}(\overline{\partial}_b)$, and $\overline{\partial}_{b,H}^*v:=w$. Then the Gaffney extension is given by
\begin{equation}
    \label{Gaffney extension}
    \begin{split}
        &~{\rm Dom}(\Box_b^{(q)})
        :=\{u\in{\rm Dom}(\overline{\partial}_b)\cap{\rm Dom}(\overline{\partial}^*_{b,H}):\overline{\partial}_b u\in{\rm Dom}\overline{\partial}^*_{b,H},~\overline{\partial}_{b,H}^*u\in{\rm Dom}\overline{\partial}_b\},\\
        &\Box_b^{(q)}u=(\overline{\partial}^*_{b,H}\overline{\partial}_b+\overline{\partial}_b\overline{\partial}^*_{b,H})u,~\text{for all}~u\in{\rm Dom}(\Box_b^{(q)}).
    \end{split}
\end{equation}
Let $\Omega^{(0,q)}_m(X):=\{u\in \Omega^{(0,q)}(X):-iT_0 u=mu\}$ and $L^2_{(0,q),m}(X)$ be the completion of $\Omega^{(0,q)}_m(X)$ with respect to $(\cdot|\cdot)$. We need the following:
\begin{proposition}
The Gaffney extension and the weak maximal extension for $\Box_b^{(q)}$ coincides on $L^2_{(0,q),m}(X)$, and hence on $L^2_{(0,q),m,mp_1,\cdots,mp_d}(X)$. 
\end{proposition}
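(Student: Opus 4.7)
The plan is to verify the two inclusions separately. For the easy direction, if $u \in L^2_{(0,q),m}(X)$ lies in the Gaffney domain, then $\Box_b^{(q)} u = \overline{\partial}_{b,H}^* \overline{\partial}_b u + \overline{\partial}_b \overline{\partial}_{b,H}^* u \in L^2$, and pairing against an arbitrary $\varphi \in \Omega^{(0,q)}(X)$ shows, via integration by parts on the closed manifold $X$, that this agrees with the distributional action of $\Box_b^{(q)}$. Hence the Gaffney domain is contained in the weak maximal domain.

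For the reverse direction, let $u \in L^2_{(0,q),m}(X)$ satisfy $\Box_b^{(q)} u \in L^2$ in the distribution sense. I would prove that $u$ is actually of Sobolev class $H^2$, which forces each of the distributional objects $\overline{\partial}_b u$, $\overline{\partial}_b^* u$, $\overline{\partial}_b \overline{\partial}_b^* u$, $\overline{\partial}_b^* \overline{\partial}_b u$ to lie in $L^2$; together with the standard fact that on a closed manifold the Hilbert adjoint $\overline{\partial}_{b,H}^*$ coincides with the distributional formal adjoint $\overline{\partial}_b^*$ whenever the latter is square-integrable (since the minimal and maximal extensions of a first-order differential operator with smooth coefficients on a closed manifold agree), this places $u$ in the Gaffney domain.

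The $H^2$ regularity comes from the key observation that $\Box_b^{(q)}$ becomes elliptic when restricted to the $m$-th Fourier component. In a BRT chart from Theorem \ref{BRT patch} with $T_0 = \partial/\partial\theta$ and $L_j = \partial/\partial\overline{z}_j + i(\partial\phi/\partial\overline{z}_j)\partial/\partial\theta$, the condition $-iT_0 u = mu$ replaces $\partial/\partial\theta$ by the scalar $im$, so each $L_j$ acts on the $z$-dependent coefficient of $u$ as the first-order operator $\partial/\partial\overline{z}_j - m\,\partial\phi/\partial\overline{z}_j$. Substituting this into the Bochner--Kodaira--Kohn formula of Theorem \ref{Bochner formula} shows that $\Box_b^{(q)}$, restricted to the $m$-th Fourier component in such a chart, coincides with a genuinely elliptic second-order matrix-valued operator in the $z$-variables (with $m$-dependent lower-order remainder). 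Standard interior elliptic regularity gives $u \in H^2_{\rm loc}$, and a $T^d$-averaged partition of unity together with the compactness of $X$ upgrade this to $u \in H^2(X)$.

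The main obstacle is to make the ellipticity on the Fourier component rigorous as a statement about the form-valued operator rather than just the scalar one: one must fix a local frame of $T^{*0,q}X$ in each BRT chart and verify that the principal symbol of the Fourier-restricted $\Box_b^{(q)}$ is invertible as a matrix symbol in the horizontal cotangent directions. A secondary technicality is that the partition of unity used to globalize the elliptic estimate must respect the Fourier decomposition; this is handled by averaging the cutoffs over the $S^1\times T^d$-action so that multiplication by them preserves $L^2_{(0,q),m}(X)$.
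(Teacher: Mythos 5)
Your proposal is correct and follows the same overall architecture as the paper's proof: the easy inclusion is obtained by pairing against smooth forms, and the hard inclusion by first upgrading $u$ to $H^2$ via an ellipticity argument on the $m$-th Fourier component and then identifying the Hilbert adjoint with the distributional formal adjoint. For that last identification the paper runs the Friedrichs mollification explicitly (citing Ma--Marinescu's Friedrichs lemma) to check each of the three conditions defining the Gaffney domain; this is exactly the ``minimal extension equals maximal extension on a closed manifold'' fact you invoke, so that part matches. The one genuine difference is the mechanism producing the $H^2$ regularity. You restrict $\Box_b^{(q)}$ to the Fourier component inside a BRT chart and argue that the reduced operator is elliptic in the base variables $z$; this works, but it is precisely the step you yourself flag as delicate --- one must descend to the quotient chart, verify the matrix principal symbol in a frame of $T^{*0,q}X$, recover the $\theta$-derivatives separately from $T_0u=imu$, and globalize with group-averaged cutoffs. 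The paper sidesteps all of this with a single observation: $\Box_b^{(q)}-T_0^2$ is a globally defined elliptic operator on $X$ (adding $-T_0^2$ fills in the missing characteristic directions $\lambda\omega_0$), and on $L^2_{(0,q),m}(X)$ one has $T_0^2u=-m^2u\in L^2$ for free, so $\Box_b^{(q)}u\in L^2$ gives $(\Box_b^{(q)}-T_0^2)u\in L^2$ and standard global elliptic regularity yields $u\in H^2_{(0,q),m}(X)$ with no chart-by-chart reduction. Your route makes visible \emph{why} the Fourier restriction is elliptic; the paper's trick is shorter and eliminates the globalization and frame technicalities entirely.
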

\begin{proof}
On one hand, by $\overline{\partial}_{b,H}^*|_{\Omega^{(0,q)}(X)}=\overline{\partial}_{b}^*$, for all $v\in\Omega^{(0,q)}(X)$, 
$$
(\Box_b^{(q)}u|v)=(u|\Box_b^{(q)}v)=(u|\Box^{(q)}_{b,{\rm max}}v)=:(\Box^{(q)}_{b,{\rm max}}u|v),~\text{for all}~u\in{\rm Dom}(\Box_b^{(q)}),
$$
so $\Box_{b,{\rm max}}^{(q)}u=\Box^{(q)}_b u\in L^2_{(0,q)}(X)$ for $u\in {\rm Dom}(\Box_b^{(q)})$. This implies that ${\rm Dom}(\Box_b^{(q)})\subset{\rm Dom}(\Box_{b,max}^{(q)})$. On the other hand, though Theorem \ref{Bochner formula} suggests that $\Box^{(q)}_b$ is not an elliptic operator (since the
principal symbol $\sigma_{\Box_b^{(q)}}(x,\lambda\omega_0(x))=0$, for all $\lambda\in\mathbb{R}\setminus\{0\}$), the operator $\Box^{(q)}_b-T_0^2$ is elliptic. After
applying the elliptic regularity for $\Box^{(q)}_b-T_0^2$ on the space
$L^2_{(0,q),m}(X)$, we can check that 
$$
{\rm Dom}(\Box^{(q)}_{b,{\rm max}})\cap L^2_{(0,q),m}(X)=H^2_{(0,q),m}(X).
$$
Here, we let $H^s_{(0,q)}(X)$ to be the Sobolev space of order $s$ for $(0,q)$ forms on $X$ with respect to a Sobolev norm $\|\cdot\|_s$ induced by the invariant $L^2$-inner product $(\cdot|\cdot)$, and
$H^s_{(0,q),m}(X):=H^s_{(0,q)}(X)\cap L^2_{(0,q),m}(X)$. Accordingly,
${\rm Dom}(\Box_{b,{\rm max}}^{(q)})\cap L^2_{(0,q),m}(X)=H^2_{(0,q),m}(X)\subset{\rm Dom}(\overline{\partial}_b)$; also, by Friedrichs lemma
\cite{MaMarinescu2007}*{Lemma 3.1.3}, for $v\in H^2_{(0,q),m}(X)$, we can find a sequence $\{v_j\}_{j=1}^\infty\subset\Omega^{(0,q)}(X)$ such that $v_j\to v$ in $L^2_{(0,q)}(X)$ and $\overline{\partial}_b^* v_j\to \overline{\partial}_b^* v$ in $L^2_{(0,q-1)}(X)$ as $j\to\infty$. So for all $u\in {\rm Dom}(\overline{\partial}_b)$, there is a constant $C_1:=\|\overline{\partial}_b^* v\|_X<\infty$ such that
$$
|(\overline{\partial}_b u|v)|=\lim_{j\to\infty}|(\overline{\partial}_b u|v_j)|=\lim_{j\to\infty}|(u|\overline{\partial}_b^*v_j)|\leq \lim_{j\to\infty}\|u\|_X\|\overline{\partial}_b^*v_j\|_X=\|\overline{\partial}_b^*v\|_X\|u\|_X\leq C_1 \|u\|_X.
$$
Thus,
$$
{\rm Dom}(\Box_{b,{\rm max}}^{(q)})\cap L^2_{(0,q),m}(X)=H^2_{(0,q),m}(X)\subset{\rm Dom}(\overline{\partial}_b)\cap {\rm Dom}(\overline{\partial}^*_{b,H}).
$$
Next, we check that if $v\in{\rm Dom}(\Box_{b,{\rm max}}^{(q)})\cap L^2_{(0,q),m}(X)=H^2_{(0,q),m}(X)$, then there is a constant $C_2>0$ such that
$$
|(\overline{\partial}_b u|\overline{\partial}_b v)|\leq C_2\|u\|_X~\text{for all}~u\in{\rm Dom}(\overline{\partial}_b).
$$
If this is true, then $\overline{\partial}_b:{\rm Dom}({\Box_{b,{\rm max}}^{(q)}})\to{\rm Dom}(\overline{\partial}^*_{b,H})$. Now, let $w:=\overline{\partial}_b v\in H^1_{(0,q+1),m}(X)$, by Friedrichs lemma again, we can take a sequence $\{w_j\}_{j=1}^\infty\subset\Omega^{(0,q+1)}(X)$ such that  $w_j \to w$ in $L^2_{(0,q)}(X)$ and $\overline{\partial}_b^* w_j\to \overline{\partial}_b^* w$ in $L^2_{(0,q-1)}(X)$. Then for a constant $C_2:=\|\overline{\partial}_b^*w\|_X<\infty$,
$$
|(\overline{\partial}_b u|\overline{\partial}_b v)|=\lim_{j\to\infty}|(\overline{\partial}_b u|w_j)|=\lim_{j\to\infty}|(u|\overline{\partial}_b^* w_j)|\leq \lim_{j\to\infty}\|u\|_X\|\overline{\partial}^*_b w_j\|_X=\|\overline{\partial}_b^* w\|_X\|u\|_X\leq C_2\|u\|_X.
$$
Similarly, we have $\overline{\partial}^*_{b,H}:{\rm Dom}({\Box_{b,{\rm max}}^{(q)}})\to{\rm Dom}(\overline{\partial}_b)$, so we can conclude
$$
{\rm Dom}(\Box^{(q)}_{b,{\rm max}})\cap L^2_{(0,q),m}(X)\subset{\rm Dom}(\Box_b^{(q)})\cap L^2_{(0,q),m}(X).
$$
\end{proof}
With the proposition, from now on, we take the extension
$$
\Box^{(q)}_{b,m,mp_1,\cdots,mp_d}:{\rm Dom}(\Box^{(q)}_{b,m,mp_1,\cdots,mp_d})\subset L^2_{(0,q),m,mp_1,\cdots,mp_d}(X)\to L^2_{(0,q),m,mp_1,\cdots,mp_d}(X)
$$
where
$$
{\rm Dom}\left(\Box^{(q)}_{b,m,mp_1,\cdots,mp_d}\right):=\left\{u\in L^2_{(0,q)}(X):\Box^{(q)}_{b}u\in L^2_{(0,q)}(X)\right\}\cap L^2_{(0,q),m,mp_1,\cdots,mp_d}(X)
$$
and $\Box^{(q)}_{b,m,mp_1,\cdots,mp_d}u:=\Box_b^{(q)}u$ in the distribution sense for all $u\in {\rm Dom}\left(\Box^{(q)}_{b,m,mp_1,\cdots,mp_d}\right)$, to extend $\Box^{(q)}_{b,m,mp_1,\cdots,mp_d}$. We have some standard spectral properties for $\Box^{(q)}_{b,m,mp_1,\cdots,mp_d}$:
\begin{theorem}
\label{Hodge theorem for Kohn's Laplacian}
For $\Box^{(q)}_{b,m,mp_1,\cdots,mp_d}:{\rm Dom}\left(\Box^{(q)}_{b,m,mp_1,\cdots,mp_d}\right)\to L^2_{(0,q)m,mp_1,\cdots,mp_d}(X)$, we have
\begin{enumerate}
    \item $\Box^{(q)}_{b,m,mp_1,\cdots,mp_d}$ is a non-negative and a self-adjoint operator.
    \item The spectrum ${\rm Spec}(\Box^{(q)}_{b,m,mp_1,\cdots,mp_d})$ consists only of eigenvalues, and it is a countable and discrete subset in $[0,\infty)$.
    \item For each $\mu\in{\rm Spec}(\Box^{{q}}_{b,m,mp_1,\cdots,mp_d})$, the space of eigenforms 
    $$
    \mathcal{H}^q_{b,\mu,m,mp_1,\cdots,mp_d}(X):=\{u\in {\rm Dom}(\Box^{(q)}_{b,m,mp_1,\cdots,mp_d}):\Box^{(q)}_bu=\mu u\}
    $$
    is a finite dimensional subspace of $\Omega^{(0,q)}_{m,mp_1,\cdots,mp_d}(X)$.
    \item 
    ${H}^q_{b,m,mp_1,\cdots ,mp_d}(X)\cong\mathcal{H}^q_{b,m,mp_1,\cdots ,mp_d}(X):=\mathcal{H}^q_{b,\mu=0,m,mp_1,\cdots ,mp_d}(X)$.
\end{enumerate}
\end{theorem}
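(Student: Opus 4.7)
The plan is to leverage the crucial observation, contained in the preceding proposition, that on the equivariant subspace $L^2_{(0,q),m,mp_1,\cdots,mp_d}(X)$ the weak maximal extension and the Gaffney extension of $\Box^{(q)}_b$ coincide, and moreover that their common domain is exactly $H^2_{(0,q),m,mp_1,\cdots,mp_d}(X):=H^2_{(0,q)}(X)\cap L^2_{(0,q),m,mp_1,\cdots,mp_d}(X)$. For item (1), non-negativity is immediate: for $u\in{\rm Dom}(\Box^{(q)}_{b,m,mp_1,\cdots,mp_d})$, one has $(\Box^{(q)}_b u|u)=\|\overline{\partial}_b u\|_X^2+\|\overline{\partial}^*_{b,H}u\|_X^2\geq 0$. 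Self-adjointness on the ambient space is built into the Gaffney construction; to descend to the equivariant subspace I would verify that $T_0$ and $T_1,\ldots,T_d$ commute with $\Box^{(q)}_b$ (this follows from (\ref{T_0 is CR}), (\ref{T_j are CR}) and the torus invariance of the metric), so that the orthogonal projection onto $L^2_{(0,q),m,mp_1,\cdots,mp_d}(X)$ commutes with the resolvent of $\Box^{(q)}_b$ and the restriction inherits self-adjointness.

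For items (2) and (3), the main trick is the identity $\Box^{(q)}_b=(\Box^{(q)}_b-T_0^2)-m^2$ valid on the equivariant subspace, together with the fact, used already in the proof of the preceding proposition, that $\Box^{(q)}_b-T_0^2$ is a genuine second-order elliptic operator. Elliptic regularity applied to any eigenequation $\Box^{(q)}_b u=\mu u$ with $u\in L^2_{(0,q),m,mp_1,\cdots,mp_d}(X)$ then yields $(\Box^{(q)}_b-T_0^2)u=(\mu+m^2)u$, so bootstrapping gives $u\in\bigcap_s H^s_{(0,q)}(X)=\mathscr{C}^\infty(X,T^{*0,q}X)$, establishing smoothness of eigenforms. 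Next, I would show that the resolvent $(\Box^{(q)}_{b,m,mp_1,\cdots,mp_d}+1)^{-1}$ is compact: by the preceding proposition its image is $H^2_{(0,q),m,mp_1,\cdots,mp_d}(X)$, and on the compact manifold $X$ the inclusion $H^2_{(0,q)}(X)\hookrightarrow L^2_{(0,q)}(X)$ is compact by Rellich--Kondrachov. Applying the spectral theorem for compact self-adjoint operators and pulling back to $\Box^{(q)}_{b,m,mp_1,\cdots,mp_d}$ gives a discrete countable spectrum in $[0,\infty)$ with finite-dimensional eigenspaces, each consisting of smooth forms by the regularity just established.

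For item (4), I would run the standard Hodge-theoretic argument adapted to the equivariant setting. Since $\Box^{(q)}_{b,m,mp_1,\cdots,mp_d}$ has closed range (its spectrum being discrete), one obtains the orthogonal decomposition
\[
L^2_{(0,q),m,mp_1,\cdots,mp_d}(X)=\mathcal{H}^q_{b,m,mp_1,\cdots,mp_d}(X)\oplus\overline{{\rm Im}\,\overline{\partial}_b}\oplus\overline{{\rm Im}\,\overline{\partial}^*_{b,H}},
\]
with the last two summands actually closed. Restricting to smooth forms, every $u\in\Omega^{(0,q)}_{m,mp_1,\cdots,mp_d}(X)$ with $\overline{\partial}_b u=0$ decomposes as a harmonic form plus $\overline{\partial}_b v$; applying Friedrichs' lemma exactly as in the proof of the preceding proposition upgrades the $L^2$ potential $v$ to a smooth equivariant $(0,q-1)$-form, which yields the desired isomorphism $H^q_{b,m,mp_1,\cdots ,mp_d}(X)\cong\mathcal{H}^q_{b,m,mp_1,\cdots ,mp_d}(X)$. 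The main technical obstacle throughout is that $\Box^{(q)}_b$ itself is not elliptic (its characteristic variety contains the line bundle spanned by $\omega_0$), so every standard Hodge-theoretic step must be routed through the elliptic auxiliary operator $\Box^{(q)}_b-T_0^2$ and carefully reconciled with the Gaffney realization; this is precisely the role played by the preceding proposition, which I would invoke rather than reprove.
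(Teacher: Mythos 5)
Your proposal is correct and is essentially the argument the paper delegates to \cite{ChengHsiaoTsai2017}*{Section 3} and \cite{HermannHsiaoLi2017embedding}*{Section 4}: everything is routed through the elliptic auxiliary operator $\Box^{(q)}_b-T_0^2$, which acts as $\Box^{(q)}_b+m^2$ on the Fourier component, followed by elliptic regularity, Rellich compactness of the resolvent, and the standard Hodge decomposition, exactly as the paper's preceding proposition sets up. Since the paper gives no details beyond the citation, your write-up supplies the intended proof rather than an alternative to it.
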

\begin{proof}
The argument is almost the same as the case \cite{ChengHsiaoTsai2017}*{Section 3} when only circle action  is involved, and for the modification to torus action we refer to the proof in \cite{HermannHsiaoLi2017embedding}*{Section 4}. 
\end{proof}
Let $n_q:=\dim\mathcal{H}^q_{b,m,mp_1,\cdots ,mp_d}<\infty$, and the torus equivariant Szeg{\H{o}} kernel function
\begin{equation}
\label{reproducing kernel at diagonal}
        {\rm Tr}\Pi^{(q)}_{m,mp_1,\cdots ,mp_d}(x)
        :=\sum_{j=1}^{n_q}|f_{j}^q(x)|_h^2:=\sum_{j=1}^{n_q}\left\langle f_j^q(x)\middle|f_j^q(x)\right\rangle
\end{equation}
where $\{f_j^q\}_{j=1}^{n_q}$ is an orthonormal basis for $\mathcal{H}^q_{b,m,mp_1,\cdots ,mp_d}$.
For an open set $D\subset X$, take $\{e_j(x)\}_{j=1}^n$ varying smoothly for $x\in D$ such that 
$\{e_j(x)\}_{j=1}^n$ is an orthonormal basis for $T^{0,1}_x X$ at every $x\in D$. For a strictly increasing index set $J=\{j_1,\cdots,j_q\}$ with $|J|=q$,  write $e^J:=e_{j_1}\wedge\cdots\wedge e_{j_q}$ and for any
$u\in\Omega^{(0,q)}(X)$, write $u(x)=\sum_{|J|=q}'u_J(x) e^J(x)$, where
$\Sigma'_{|J|=q}$ we mean the summation only over a strictly increasing index set. Then we can
find the torus equivariant Szeg{\H{o}} kernel function is the peak function
similar in \cite{HsiaoMarinescu2012}*{Lemma 2.1}, i.e.~
        $$
        {\rm Tr}\Pi^{(q)}_{m,mp_1,\cdots ,mp_d}(x)
        ={\sum_{|J|=q}}'\sup\left\{|u_J(x)|_h^2:u\in\mathcal{H}^q_{b,m,mp_1,\cdots,mp_d}(X),\|u\|_X^2=1\right\}.
        $$
        Also, for all $q=0,\cdots,n$, the torus equivariant Szeg\H{o} kernel
        $$
        \Pi^{(q)}_{m,mp_1,\cdots ,mp_d}(x,y)\in\mathscr{C}^\infty\left(X\times X,{\rm Hom}(T^{*0,q}X,T^{*0,q}X)\right)
        $$ 
        is the distribution kernel of the orthogonal projection
        $$
        \Pi^{(q)}_{m,mp_1,\cdots,mp_d}:L^2_{(0,q)}(X)\to\mathcal{H}^{(q)}_{m,mp_1,\cdots,mp_d}(X)
        $$
        with respect to $(\cdot|\cdot)$. By Theorem \ref{Hodge theorem for Kohn's Laplacian}, the projection is a smoothing operator, and we can check that locally on $D$, we have the expression
        \begin{equation}
                \Pi^{(q)}_{m,mp_1,\cdots ,mp_d}(x,y)
                ={\sum_{|I|=q}}'{\sum_{|J|=q}}'\Pi^{(q)}_{m,mp_1,\cdots ,mp_d,I,J}(x,y)e^I(x) \otimes\left(e^J(y)\right)^*
        \end{equation}
        in the sense that
        \begin{equation}
                    \Pi^{(q)}_{m,mp_1,\cdots ,mp_d}u(x)={\sum_{|I|=q}}'{\sum_{|J|=q}}'\left(\int_D\Pi^{(q)}_{m,mp_1,\cdots ,mp_d,I,J}(x,y)u_J(y)dV_X(y)\right)e^I(x)
        \end{equation}
        for $u=\sum_{{|J|=q}}'u_J e^J\in\Omega^{(0,q)}(X)$. We can check that for all strictly increasing index set $I,J$, $|I|=|J|=q$,
        \begin{equation}
        \label{diagonal kernel is kernel function}
             {\sum_{|J|=q}}'\left\langle\Pi^{(q)}_{m,mp_1,\cdots ,mp_d}(x,x) e^J(x)|e^J(x)\right\rangle={\rm Tr}\Pi^{(q)}_{m,mp_1,\cdots ,mp_d}(x),
        \end{equation}
        Here, $\Pi^{(q)}_{m,mp_1,\cdots ,mp_d,I,J}(x,y)\in\mathscr{C}^\infty(D\times D)$ for all strictly increasing index set $I$ and $J$, $|I|=|J|=q$. Moreover, we can also check that for all strictly invrasing index set $I$ and $J$, $|I|=|J|=q$,
        \begin{equation}
        \label{reproducing kernel in x and y}
\Pi^{(q)}_{m,mp_1,\cdots,mp_d,I,J}(x,y)=\sum_{j=1}^{n_q}f^q_{j,I}(x)\overline{f^q_{j,J}}(y),
        \end{equation}
        where $f^q_{j}=\sum_{|K|=q}'f^q_{j,K}e^K,j=1,\cdots,n_q$, is an orthonormal basis for $\mathcal{H}^q_{b,m,mp_1,\cdots,mp_d}(X)$. 
\subsection{Notations in semi-classical analysis and microlocal analysis}
We here present some convention in semi-classical analysis and microlocal analysis \cites{DS1999,GrigisSjostrand1994,MS1975} to describe and calculate the asymptotic behavior of the torus equivariant Szeg\H{o} kernel. Let $U$ be an open set in $\mathbb{R}^{n_1}$ and let $V$ be an open set in $\mathbb{R}^{n_2}$. Let $E$ and $F$ be vector bundles over $U$ and $V$, respectively. Let $\mathscr{C}^\infty_0(V,F)$ and $\mathscr{C}^\infty(U,E)$ be the space of smooth sections of $F$ over $V$ with compact support in $V$ and the space of smooth sections of $E$ over $U$, respectively; $\mathscr{D}'(U,E)$ and $\mathscr{E}'(V,F)$ be the space of distributional sections of $E$ over $U$ and the space of distributional sections of $F$ over $V$ with compact support in $V$, respectively. We say an $m$-dependent continuous linear operator
$$
A_m:\mathscr{C}^\infty_0(V,F)\to\mathscr{D}'(U,E) 
$$
is $m$-negligible if
\begin{enumerate}
    \item for all $m$ large enough $A_m$ is a smoothing operator, which is equivalent to (\cite{hormander2003analysis}*{Section 5.2})
    $$
    A_m:\mathscr{E}'(V,F)\to\mathscr{C}^\infty(U,E)~\text{is}~\text{continuous}
    $$
    or its Schwartz kernel ${A_m}(x,y)$ is smooth, i.e.~
    $$
    {A_m}(x,y)\in\mathscr{C}^\infty(U\times V, E\boxtimes F^*),
    $$
    where $E\boxtimes F^*$ is the vector bundle over $U\times V$ with fiber ${\rm Hom}(F_y,E_x)$ at each $(x,y)\in U\times V$.
    \item for any compact set $K$ in $U\times V$, multi-index
    $\alpha,\beta\in\mathbb{N}_0^N$ and $N\in\mathbb{N}_0$, where
    $\mathbb{N}_0:=\mathbb{N}\cup\{0\}$ and $\mathbb{N}:=\{1,2,3,\cdots\}$, there
    exists a constant $C_{K,\alpha,\beta,N}>0$ such that
    $$
    |\partial^\alpha_x\partial^\beta_y A_m(x,y)|\leq C_{K,\alpha,\beta,N} m^{-N}~\text{for all}~(x,y)\in K,~m~\text{large enough}.
    $$
\end{enumerate}
From now on, we also use the notation $A_m(x,y)=O(m^{-\infty})$ or $A_m=O(m^{-\infty})$ on $U\times V$ for an
$m$-negligible continuous linear operator $A_m$. Moreover, for two continuous linear
operators $A_m,B_m:\mathscr{C}^\infty_0(V,F)\to\mathscr{D}'(U,E)$, we write
$$
A_m(x,y)\equiv B_m(x,y)~{\rm mod}~O(m^{-\infty})~\text{on}~U\times V
$$
or
$$
A_m\equiv B_m~{\rm mod}~O(m^{-\infty})~\text{on}~U\times V
$$
if
$$
(A_m-B_m)(x,y)=O(m^{-\infty})~\text{on}~U\times V.
$$
Let $W$ be an open set in $\mathbb{R}^{N}$, we define the space
$$
S(1;W):=\{a\in\mathscr{C}^\infty(W):\sup_{x\in W}|\partial^\alpha_x a(x)|<\infty~\text{for all}~\alpha\in\mathbb{N}_0^N\}.
$$
Consider the space $S^0_{\rm loc}(1;W)$ containing all smooth functions $a(x,m)$ with
real parameter $m$ such that for all multi-index $\alpha\in\mathbb{N}_0^N$, any
cut-off function $\chi\in\mathscr{C}^\infty_0(W)$, we have
$$
\sup_{\substack{m\in\mathbb{R}\\m\geq 1}}\sup_{x\in W}|\partial^\alpha_x(\chi(x)a(x,m))|<\infty.
$$
For general $k\in\mathbb{R}$, we can also consider
$$
S^k_{\rm loc}(1;W):=\{a(x,m):m^{-k}a(x,m)\in\ S^0_{\rm loc}(1;W)\}.
$$
In other words, $S^k_{\rm loc}(1;W)$ takes all the smooth function $a(x,m)$ with parameter $m\in\mathbb{R}$ satisfying the estimate: for all $x\in W,~\text{multi-index}~\alpha\in\mathbb{N}_0^n~\text{and}~\chi\in\mathscr{C}^\infty_0(W)$ with ${\rm supp}(\chi)\subset K$, $K$ is a compact subset of $W$, then there is a constant $C_{K,\alpha}>0$ independent of $m$ such that
$$
|\partial^\alpha_x (\chi(x)a(x,m))|\leq C_{K,\alpha} m^k,~\text{for all}~m>0.
$$
For a sequence of $a_j\in S^{k_j}_{\rm loc}(1;W)$ with $k_j$ decreasing, $k_j\to-\infty$, and $a\in S^{k_0}_{\rm loc}(1;W)$. We say 
$$
a(x,m)\sim\sum_{j=0}^\infty a_j(x,m)~\text{in}~S^{k_0}_{\rm loc}(1;W)
$$
if for all $l\in\mathbb{N}_0$, we have
$$
a-\sum_{j=0}^l a_j\in S^{l}_{\rm loc}(1;W).
$$
In fact, for all sequence $a_j$ above, there always exists an element $a$ as the asymptotic sum, which is unique up to the elements in $S^{-\infty}_{\rm loc}(1;W):=\cap_{k} S^k_{\rm loc}(1;W)$. The above discussion can be found in \cite{DS1999}, and all the notations introduced above and can be generalized to the case on paracompact manifolds. 

Let $M$ be an open set in $\mathbb{R}^n$, $\dot{\mathbb{R}}^n:=\mathbb{R}^n\setminus\{0\}$, $\mathbb{N}:=\{1,2,3\cdots\}$ and $\mathbb{N}_0:=\mathbb{N}\cup\{0\}$. Let $\rho,\delta$ be real numbers such that $0\leq\delta<\rho\leq 1$.
\begin{definition}
\quad

\begin{enumerate}
    \item The symbol space with order $m$ in type $(\rho,\delta)$ on $M\times\mathbb{R}^N$ is denoted by $S^m_{\rho,\delta}(M\times\mathbb{R}^N)$, which is the space of all $a\in \mathscr{C}^{\infty}(M\times\mathbb{R}^N)$ satisfying: for all compact subset $K$ in $M$, and all multi-indices $\alpha\in \mathbb{N}_0^n,~\beta\in\mathbb{N}_0^N$, there exists a constant $C_{K,\alpha,\beta}>0$ such that
  $$
  \sup\limits_{(x,\theta)\in K\times\mathbb{R}^N}|\partial^{\alpha}_x\partial^{\beta}_{\theta}a(x,\theta)|\leq C_{K,\alpha,\beta}(1+|\theta|)^{m-\rho|\beta|+\delta|\alpha|}.
  $$
    \item A function $\phi(x,\theta)\in \mathscr{C}^{\infty}(M\times\mathbb{\dot R}^N)$ is called a phase function if it satisfies: $\text{Im}(\phi)\geq 0$, $\phi(x,\lambda\theta)=\lambda\phi(x,\theta)~\text{for all}~\lambda>0$ and every $(x,\theta)\in M\times\dot{\mathbb{R}}^{N}$, and
  $d\phi:=\sum\limits_{i=1}^n\frac{\partial\phi}{\partial x_i}dx_i+\sum\limits_{j=1}^N \frac{\partial\phi}{\partial{\theta_j}}d{\theta_j}\neq 0$ everywhere.
\end{enumerate}
\end{definition}
Let $a_j\in S^{m_j}_{\rho,\delta}(M\times\mathbb{R}^N),~j\in\mathbb{N}_0$ and $m_j\searrow{-\infty}$ as $j\to\infty$, then there exists $a\in S^{m_0}_{\rho,\delta}$ unique modulo $S^{-\infty}(M\times\mathbb{R}^N)$ such that
  $$
  a-\sum\limits_{0\leq j\leq k}a_j\in S^{m_k}_{\rho,\delta}~\text{for all}~k\in\mathbb{N}_0.
  $$
  We call such $a$ the  asymptotic sum of $a_j$, denoted by $a\sim\sum\limits_{j=0}^{\infty}a_j$. The space of classical symbols $S^m_{\rm cl}(M\times{\mathbb{R}^N})$   collects all $a(x,\theta)\in S^m_{1,0}(M\times{\mathbb{R}^N})$ such that
  $ a\sim\sum_{j=0}^\infty a_{m-j}(x,\theta)$, where the function $a_{m-j}\in\mathscr{C}^\infty(M\times{\mathbb{R}}^N)~\text{is positively homogeneous of degree}~m-j~\text{in the variable}~\theta\neq 0.$ Let the symbol $a\in S^m_{\rm cl}(M\times\mathbb{R}^N)$ and $\phi$ be a phase function on $X\times\dot{\mathbb{R}}^N$. If there is $k\in\mathbb{N}_0$ such that $m+k<-N$, then the oscillatory integral 
$$
I(a,\phi):=\int_{\mathbb{R}^N}e^{i\phi(x,\theta)}a(x,\theta)d\theta
$$
converges absolutely, and is a function in $\mathscr{C}^k(M)$ . Moreover, 
\begin{proposition}
\label{oscillatory integral}
    For any $m$ and $a\in S_{\rm cl}^{m}(M\times \mathbb{R}^N)$, there is a unique way of defining $I(a,\phi)\in\mathscr{D'}(M)$ such that
  \begin{enumerate}
    \item
    $I(a,\phi)=\int_{\mathbb{R}^N}e^{i\phi(x,\theta)}a(x,\theta)d\theta$ when $m<-N$.
    \item
    The map $a\mapsto {\rm}I(a,\phi)$ is continuous.
  \end{enumerate}
\end{proposition}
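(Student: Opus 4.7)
The plan is to follow the classical H\"ormander regularization by integration by parts. The first step is to decouple the singularity of the phase at $\theta=0$: pick a cutoff $\chi\in\mathscr{C}^\infty_0(\mathbb{R}^N)$ equal to $1$ near the origin, and split
\[
\int_{\mathbb{R}^N}e^{i\phi(x,\theta)}a(x,\theta)\,d\theta = \int_{\mathbb{R}^N}e^{i\phi}\chi a\,d\theta + \int_{\mathbb{R}^N}e^{i\phi}(1-\chi)a\,d\theta.
\]
The first integral is compactly supported in $\theta$ and defines a smooth function of $x\in M$ by differentiation under the integral sign, so no regularization is needed there. The entire burden is to make sense of the second term, which is supported in $M\times\dot{\mathbb{R}}^N$, where $\phi$ is smooth and $d\phi$ is nowhere zero.

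Next I would construct a first-order differential operator
\[
L = \sum_{j=1}^N \alpha_j(x,\theta)\frac{\partial}{\partial\theta_j} + \sum_{k=1}^{n} \beta_k(x,\theta)\frac{\partial}{\partial x_k} + \gamma(x,\theta)
\]
with coefficients $\alpha_j\in S^0_{1,0}$, $\beta_k\in S^{-1}_{1,0}$, $\gamma\in S^{-1}_{1,0}$ on the support of $1-\chi$, satisfying $L(e^{i\phi})=e^{i\phi}$ there. To produce such an $L$ one inverts the symbol identity $i\bigl(\sum_j\alpha_j\partial_{\theta_j}\phi+\sum_k\beta_k\partial_{x_k}\phi\bigr)+\gamma=1$, using that $d\phi$ is nowhere zero on $M\times\dot{\mathbb{R}}^N$ and that $\phi$ is positively homogeneous of degree one in $\theta$ (Euler's identity $\sum_j\theta_j\partial_{\theta_j}\phi=\phi$); a convenient normalization uses the real denominator $\Delta:=|\theta|^2\sum_j\bigl|\partial_{\theta_j}\phi\bigr|^2+\sum_k\bigl|\partial_{x_k}\phi\bigr|^2$, which is strictly positive and positively homogeneous of degree two on the relevant region. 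The key consequence is that the transpose ${}^tL$ sends $S^{m'}_{\rm cl}(M\times\mathbb{R}^N)$ continuously into $S^{m'-1}_{\rm cl}(M\times\mathbb{R}^N)$.

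With $L$ in hand, I would iterate integration by parts to define
\[
I(a,\phi) := \int_{\mathbb{R}^N}e^{i\phi}\chi a\,d\theta + \int_{\mathbb{R}^N}e^{i\phi}\bigl({}^tL\bigr)^k\bigl((1-\chi)a\bigr)\,d\theta
\]
for any $k$ with $m-k<-N$; the second integral is then absolutely convergent, and taking $k$ still larger permits differentiation under the integral sign to any prescribed order, so the right-hand side defines an element of $\mathscr{C}^\ell(M)$ for arbitrary $\ell$, and ultimately a distribution on $M$. When $m<-N$, a single integration by parts (or just taking $k=0$) confirms that this agrees with the absolutely convergent integral in (1), establishing consistency with property~(1). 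I would then verify independence of the choice of $\chi$, $L$, and $k$: two admissible regularizations differ by a telescoping sum of absolutely convergent integrals obtained by applying $\mathrm{id}-{}^tL$, producing the same distribution. Continuity of $a\mapsto I(a,\phi)$ then follows directly from the explicit symbol seminorm bounds on $({}^tL)^k a$ together with dominated convergence.

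The main obstacle is the construction of $L$ in the complex-phase setting: because $\phi$ takes values in the closed upper half-plane, the naive inverse of $d\phi$ would be singular where $\mathrm{Im}\,\phi>0$, and one must work with the real-positive denominator $\Delta$ above rather than with $d\phi$ directly. Verifying that the resulting coefficients lie in the correct classes $S^\bullet_{1,0}$ and that iterated application of ${}^tL$ respects the classical filtration requires careful Leibniz-rule bookkeeping; once this is done, the rest of the argument is formal symbol calculus.
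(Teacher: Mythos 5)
Your plan is correct and is essentially the argument the paper itself relies on: the proposition is stated without proof and deferred to Grigis--Sj\"ostrand, and your construction (cutoff near $\theta=0$, a first-order operator $L$ with $L e^{i\phi}=e^{i\phi}$ built from the real, homogeneous-of-degree-two denominator $\Delta=|\theta|^2\sum_j|\partial_{\theta_j}\phi|^2+\sum_k|\partial_{x_k}\phi|^2$, iterated transposition, and the bound $|e^{i\phi}|=e^{-\mathrm{Im}\,\phi}\le 1$) is exactly that standard regularization adapted to the complex-phase setting. The only point worth making explicit beyond well-definedness is the uniqueness clause itself: any map satisfying (1) and (2) is determined because symbols truncated by $\chi(\epsilon\theta)$ converge to $a$ in $S^{m+\delta}_{\rho,\delta}$ for any $\delta>0$, so density plus continuity forces agreement with your regularized integral.
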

  For open sets $U\subset\mathbb{R}^{n_1},~V\subset\mathbb{R}^{n_2}$, by the Schwartz kernel theorem \cite{hormander2003analysis}*{Theorem 5.2.1}, there exists a bijection between $K_A\in\mathscr{D}'(U\times V)$ and a continuous linear map
$A:\mathscr{C}^{\infty}_0(V)\to\mathscr{D}'(U)$ by the correspondence
$$
\langle Au,v\rangle _{U}=\langle K_A,v\otimes u\rangle _{U\times V}
$$
for all $u\in \mathscr{C}^{\infty}_0(V),~v\in \mathscr{C}^{\infty}_0(U)$, where $\langle\cdot,\cdot\rangle$ means the pairing by duality and the tensor product $v\otimes u$ is defined $(v\otimes u)(x,y):=v(x)u(y)\in\mathscr{C}^\infty_0(U\times V)$. We call $K_A$ the distribution kernel of $A$. Let $\phi$ be a phase function on $(U\times V)\times\dot{\mathbb{R}^N}$, and
$a\in S^m_{\rm cl}\left((U\times V)\times\mathbb{R}^N\right)$. A continuous linear operator
$A:\mathscr{C}^{\infty}_0(V)\to\mathscr{D}'(U)$ is called a Fourier integral operator
if its distribution kernel is an oscillatory integral of the form
$$
K_A(x,y)=\int e^{i\phi(x,y,\theta)}a(x,y,\theta)d\theta.
$$
We formally write
$$
Au(x)=\iint e^{i\phi(x,y,\theta)}a(x,y,\theta)u(y)dyd\theta,~\text{for all}~u\in  \mathscr{C}^{\infty}_0(V).
$$
In particular, when $U=V$ and $\phi(x,y,\theta)=\langle x-y,\theta\rangle$, $A$ is said to be a pseudodifferential operator. We say a Fourier integral operator is properly supported if the projections $\pi_1:{\rm supp}(K_A)\to U$ and $\pi_2:{\rm supp}(K_B)\to V$ are proper maps. The discussion here can be found in \cite{GrigisSjostrand1994}, and all the notations introduced above can be generalized to the case on manifolds.

In the rest part of this section, we collect the essential tool in Melin--Sj\"ostrand theory on Fourier integral operators with complex phase \cite{MS1975}. For
$z=x+iy\in\mathbb{C}$, we write $\frac{\partial }{\partial z}:=\frac{1}{2}\left(\frac{\partial}{\partial x}+\frac{1}{i}\frac{\partial}{\partial y}\right)$ and $\frac{\partial}{\partial \overline{z}}=\frac{1}{2}\left(\frac{\partial}{\partial x}-\frac{1}{i}\frac{\partial}{\partial y}\right)$. Let $W\subset\mathbb{C}^n$ be an
open set, we say a $f\in\mathscr{C}^\infty (W)$ is almost analytic if for any compact
subset $K\subset W$ and any $N\in\mathbb{N_0}$, there is a constant $C_N>0$ such that
$$
\left|\frac{\partial f}{\partial\overline{z}}(z)\right|\leq C_N|{\rm Im}z|^N~\text{for all}~z\in K.
$$
We say two almost analytic functions $f_1$ and $f_2$ are equivalent, or $f_1\sim f_2$,
if for any compact subset $K\subset W$ and any $N\in\mathbb{N_0}$, there is a
constant $C_N>0$ such that
$$
|(f_1-f_2)(z)|\leq C_N|{\rm Im}z|^N~\text{for all}~z\in K.
$$
For $W_{\mathbb{R}}:=W\cap\mathbb{R}^n$, then for $f\in\mathscr{C}^\infty(W_{\mathbb{R}})$, $f$ always admits an almost analytic extension up to equivalence. One way is again via the Borel construction, for example, see \cite{Davies1995}*{Section 2.2}. The following proposition is about the critical point in the sense of Melin--Sj\"ostrand:
\begin{proposition}
\label{complex critical point}
Assume $f(x,w)$ is a smooth complex-valued function in a neighborhood of $(0,0)\in\mathbb{R}^{n+m}$ and that ${\rm Im}f\geq 0$, ${\rm Im}f(0,0)=0$, $f'_x(0,0)=0$, $\det f''_{xx}(0,0)\neq 0$. Let $\tilde{f}(z,w)$ be an almost analytic extension of f to a complex neighboehood of $(0,0)$, where $z=x+iy$ and $w\in\mathbb{C}^m$. By implicit function theorem, we denote $Z(w)$ to be the solution of
$$
\frac{\partial\tilde{f}}{\partial z}(Z(w),w)=0 
$$
in a neighborhood of of $0\in\mathbb{C}^m$. Then when $w$ is real, for every $N\in\mathbb{N}$, there is a constant $C_N>0$ such that for all $w\in\mathbb{R}^m$ near $0$,
$$
\left|\frac{\partial}{\partial w}\left(\tilde{f}(Z(w),w)\right)-\frac{\partial}{\partial w}\left(\tilde{f}(z,w)\middle|_{z=Z(w)}\right)\right|\leq C_N |{\rm Im}Z(w)|^N.
$$
Moreover, there are constants $C_1$, $C_2>0$ such that 
$$
{\rm Im}\tilde{f}(Z(w),w)\geq C_1|{\rm Im}Z(w)|^2,~w\in\mathbb{R}^m,~w~\text{near}~0
$$
and
$$
{\rm Im}\tilde{f}(Z(w),w)\geq C_2\inf_{x\in\Omega}\left({\rm Im}f(x,w)+|d_x f(x,w)|^2\right),~w\in\mathbb{R}^m,~w~\text{near}~0
$$
where $\Omega$ is an open set near the origin in $\mathbb{R}^n$. We call $\tilde{f}(Z(w),w)$ the corresponding critical value.
\end{proposition}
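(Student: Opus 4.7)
The plan is to treat part 1 via the chain rule and almost-analyticity, then handle parts 2 and 3 by Taylor-expanding $\tilde f$ holomorphically around the real point $X(w):={\rm Re}\,Z(w)$ and exploiting the critical equation $\partial_z\tilde f(Z(w),w)=0$ to eliminate the first-order term. For part 1, the chain rule gives
$$\frac{\partial}{\partial w}\bigl[\tilde f(Z(w),w)\bigr]=\partial_z\tilde f(Z(w),w)\cdot\partial_w Z(w)+\partial_{\bar z}\tilde f(Z(w),w)\cdot\partial_w\overline{Z(w)}+\frac{\partial\tilde f}{\partial w}(z,w)\Big|_{z=Z(w)}.$$
The first term on the right vanishes by the definition of $Z(w)$, the second is $O(|{\rm Im}\,Z(w)|^N)$ for every $N$ by almost-analyticity of $\tilde f$, and the last is exactly the second expression in the claim; this yields the first assertion.

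For parts 2 and 3, write $Z(w)=X(w)+iY(w)$ with $X(w),Y(w)\in\mathbb{R}^n$ and set $A:=\partial_x^2 f(X(w),w)=A_R+iA_I$, with $A_R,A_I$ real symmetric. A holomorphic Taylor expansion of $\tilde f$ in $z$ around $X(w)$, valid modulo $O(|Y|^N)$ by almost-analyticity, together with the analogous expansion of $\partial_z\tilde f$, produces
$$\tilde f(Z,w)\equiv f(X,w)+iY\cdot d_x f(X,w)-\tfrac{1}{2}Y^T A Y+O(|Y|^3),\quad 0=\partial_z\tilde f(Z,w)\equiv d_x f(X,w)+iAY+O(|Y|^2).$$
The second yields $d_x f(X,w)=-iAY+O(|Y|^2)$; substituting into the first and taking imaginary parts,
$${\rm Im}\,\tilde f(Z,w)\equiv {\rm Im}\,f(X,w)+\tfrac{1}{2}Y^T A_I Y+O(|Y|^3).$$
Since $(0,0)$ is a minimum of ${\rm Im}\,f\geq 0$, the matrix $A_I$ is positive semi-definite in a neighborhood of the origin; splitting the critical equation into real and imaginary parts also gives $d_x {\rm Im}\,f(X,w)=-A_R Y+O(|Y|^2)$.

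Coercivity for parts 2 and 3 will then follow from two inputs: the classical half-power bound $|\nabla g|^2\leq Cg$ for $g:={\rm Im}\,f(\cdot,w)\geq 0$ in $\mathscr{C}^2$, which combined with the previous identity forces $|A_R Y|^2\leq C\,{\rm Im}\,f(X,w)+O(|Y|^3)$, and the invertibility of $A$, which supplies $|A_R Y|^2+|A_I Y|^2=|AY|^2\geq \sigma_{\min}(A)^2|Y|^2$. Since $A_I\geq 0$ implies $Y^T A_I Y\geq \|A_I\|^{-1}|A_I Y|^2$, these combine to yield
$${\rm Im}\,\tilde f(Z,w)\geq c\bigl({\rm Im}\,f(X,w)+|Y(w)|^2\bigr)$$
for $w$ small enough. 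This is part 2 since $|Y(w)|=|{\rm Im}\,Z(w)|$, and since $|d_x f(X,w)|^2=|AY|^2+O(|Y|^3)\leq C'|Y|^2$ the right-hand side also dominates ${\rm Im}\,f(X,w)+|d_x f(X,w)|^2$; infimizing over $x\in\Omega$ only weakens this bound, giving part 3. The main obstacle is precisely the possible degeneracy of $A_I$: one cannot conclude $Y^T A_I Y\gtrsim|Y|^2$ from semi-definiteness alone, so the argument must merge the bound on $|A_R Y|$ coming from the positivity of ${\rm Im}\,f$ with the bound on $|A_I Y|$ coming from the Hessian term, and only then invoke the non-degeneracy of the full complex Hessian $A=A_R+iA_I$ to recover a full $|Y|^2$ lower bound.
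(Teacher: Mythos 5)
The paper gives no proof of this proposition: it is quoted as a known tool from Melin--Sj\"ostrand \cite{MS1975}, so there is no internal argument to compare against. Your overall strategy --- chain rule plus almost-analyticity for the first assertion, and for the two lower bounds a second-order Taylor expansion of $\tilde f$ at the real point $X(w):={\rm Re}\,Z(w)$, combined with the critical equation, the classical inequality $|\nabla g|^2\leq Cg$ for $g:={\rm Im}f(\cdot,w)\geq 0$, and the non-degeneracy of $A=f''_{xx}$ --- is exactly the standard route. The reduction ${\rm Im}\,\tilde f(Z,w)={\rm Im}f(X,w)+\frac{1}{2}Y^{T}A_{I}Y+O(|Y|^{3})$, the identity $|AY|^{2}=|A_{R}Y|^{2}+|A_{I}Y|^{2}$ for real $Y$, and the derivation of part 3 from part 2 via $|d_{x}f(X,w)|^{2}\leq C'|Y|^{2}$ are all correct.

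There is, however, one step that is false as stated: the claim that ``$A_{I}$ is positive semi-definite in a neighborhood of the origin.'' The Hessian of a non-negative function need not be semi-definite off its zero set; for instance $g(x_{1},x_{2})=x_{1}^{2}+x_{2}^{4}-x_{1}x_{2}^{2}\geq 0$ has Hessian ${\rm diag}(2,-2x_{1})$ at $(x_{1},0)$, which is indefinite for every $x_{1}>0$. Since $(X(w),w)$ need not be a zero of ${\rm Im}f$, continuity from $A_{I}(0,0)\geq 0$ only yields $A_{I}(X(w),w)\geq -\epsilon(w)I$ with $\epsilon(w)\to 0$, so the inequality $Y^{T}A_{I}Y\geq\|A_{I}\|^{-1}|A_{I}Y|^{2}$ on which your merging step rests is not available as written. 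The repair is routine but necessary: apply the semi-definite inequality to $A_{I}+\epsilon I$ and absorb the resulting $O(\epsilon|Y|^{2})$ loss into the $c|Y|^{2}$ term being extracted, which works for $w$ small because $c$ is bounded below uniformly. Relatedly, the final combination is asserted rather than carried out, and the naive substitution $\frac{1}{2}Y^{T}A_{I}Y\geq\frac{1}{2\|A_{I}\|}\left(\sigma_{\min}(A)^{2}|Y|^{2}-|A_{R}Y|^{2}\right)\geq\frac{\sigma_{\min}(A)^{2}}{2\|A_{I}\|}|Y|^{2}-\frac{C}{2\|A_{I}\|}{\rm Im}f(X,w)$ can leave a negative coefficient on ${\rm Im}f(X,w)$; one must use only a small multiple $t$ of this inequality, with $tC/(2\|A_{I}\|)\leq\frac{1}{2}$, before concluding ${\rm Im}\,\tilde f(Z,w)\geq\frac{1}{2}{\rm Im}f(X,w)+c|Y|^{2}$. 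With these two adjustments the argument closes.
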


We end this part by the Melin--Sj\"ostrand complex stationary phase formula \cite{MS1975}*{Theorem 2.3}:
\begin{theorem}
\label{MS formula}
Let $f(x,w)$ be as in the Proposition \ref{complex critical point}. Then there are neighborhood $U$ and $V$ of the origin of $\mathbb{R}^n$ and $\mathbb{R}^m$, respectively, and differential operators $C_{f,j}$ in $x$ of order less equals to $2j$ with smooth coefficient of $w\in V$ such that
\begin{equation}
        \left|\int e^{it f(x,w)}u(x,w)dx-\left(\det\left(\frac{t \tilde{f}''_{zz}(Z(w),w)}{2\pi i}\right)\right)^{-\frac{1}{2}}e^{it\tilde{f}}(Z(w),w)\sum_{j=0}^{N-1}(C_{f,j}\tilde{u})(Z(w),w))t^{-j}\right|
\end{equation}
is bounded by $C_N t^{-N-\frac{n}{2}}$, where $C_N$ is a positive constant, $t\geq 1$ and $u\in\mathscr{C}^\infty_0(U\times V)$. Here, the function
$$
\left(\det\left(\frac{t \tilde{f}''_{zz}(Z(w),w)}{2\pi i}\right)\right)^{-\frac{1}{2}}
$$
is the branch of the square root of the
$$
\left(\det\left(\frac{t \tilde{f}''_{zz}(Z(w),w)}{2\pi i}\right)\right)^{-1}
$$
which is continuously deformed into $1$ under the homotopy
$$
s\in[0,1]\mapsto -i(1-s)\tilde{f}''_{zz}(Z(w),w)+sI\in{\rm GL}(n,\mathbb{C}).
$$
\end{theorem}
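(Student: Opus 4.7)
The strategy is to deform the real contour of integration to a complex one passing through the critical point $Z(w)$ and then perform a Gaussian stationary phase. First I would fix almost analytic extensions $\tilde f(z,w)$ and $\tilde u(z,w)$ to a complex neighbourhood of $(0,0)\in\mathbb{C}^n\times\mathbb{R}^m$ via the Borel construction. Since $f''_{xx}(0,0)$ is nondegenerate, the holomorphic implicit function theorem applied to $\partial_z\tilde f=0$ produces a smooth critical point $Z(w)$ with $Z(0)=0$, unique up to the equivalence between almost analytic extensions.

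Next I would deform the real contour $\mathbb{R}^n$ in the $x$-variable to an $n$-chain $\Gamma_w\subset\mathbb{C}^n$ passing through $Z(w)$, for example $\Gamma_w:=\{x+i\chi(x)\,\mathrm{Im}\,Z(w):x\in\mathbb{R}^n\}$ with $\chi\in\mathscr{C}^\infty_0(\mathbb{R}^n)$ equal to $1$ near the origin. By Stokes' theorem the difference between the integrals along $\mathbb{R}^n$ and $\Gamma_w$ is an integral of $\bar\partial(e^{it\tilde f}\tilde u)=e^{it\tilde f}\bigl(it(\bar\partial\tilde f)\tilde u+\bar\partial\tilde u\bigr)$ over the interpolating chain. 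Because $\bar\partial\tilde f$ and $\bar\partial\tilde u$ vanish to infinite order on $\mathbb{R}^n$ and $\mathrm{Im}\,\tilde f$ controls $|\mathrm{Im}\,z|^2$ near $Z(w)$ by Proposition \ref{complex critical point}, each factor of $t$ produced by $\bar\partial e^{it\tilde f}$ is absorbed and this shift error is $O(t^{-\infty})$ uniformly in $w$; the contribution outside $\mathrm{supp}\,\chi$ is killed by nonstationary phase since $d_x f$ is bounded away from $0$ there.

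On $\Gamma_w$ I would Taylor expand
\begin{equation*}
\tilde f(z,w)=\tilde f(Z(w),w)+\tfrac{1}{2}\bigl\langle\tilde f''_{zz}(Z(w),w)\zeta,\zeta\bigr\rangle+R(\zeta,w),\quad \zeta:=z-Z(w),
\end{equation*}
with $R=O(|\zeta|^3)$, factor out $e^{it\tilde f(Z(w),w)}$, and rescale $\zeta=\eta/\sqrt{t}$. The resulting Gaussian with complex symmetric matrix $\tilde f''_{zz}(Z(w),w)$, whose imaginary part is positive semidefinite, is computed by analytic continuation along the homotopy $s\mapsto -i(1-s)\tilde f''_{zz}(Z(w),w)+sI$, which stays in $\mathrm{GL}(n,\mathbb{C})$; this fixes the branch of $(\det(t\tilde f''_{zz}(Z(w),w)/2\pi i))^{-1/2}$. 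Taylor expanding $e^{itR(\eta/\sqrt{t},w)}$ and $\tilde u(Z(w)+\eta/\sqrt{t},w)$ and integrating the resulting polynomials against the Gaussian produces the operators $C_{f,j}$; only integer powers of $t^{-1}$ survive by parity, and since each power of $t^{-1}$ costs two powers of $\eta$, $C_{f,j}$ has order at most $2j$.

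The main obstacle is the uniformity of the contour-shift estimate: one must show the $\bar\partial$-error is $O(t^{-\infty})$ uniformly in $w$ despite the factor $it$ that arises in $\bar\partial(e^{it\tilde f})$. This relies on the bound $\mathrm{Im}\,\tilde f(Z(w),w)\geq C_2\inf_{x\in\Omega}(\mathrm{Im}\,f+|d_x f|^2)$ of Proposition \ref{complex critical point} together with $|\bar\partial\tilde f|\leq C_N|\mathrm{Im}\,z|^N$, which let one trade every power of $t$ against a power of $|\mathrm{Im}\,z|$ absorbed by the Gaussian decay of $e^{it\tilde f}$ on the interpolating chain. Once this uniform bound is in hand, the Gaussian moment computation and the bookkeeping that yield the remainder $t^{-N-n/2}$ are routine.
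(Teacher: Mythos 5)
The paper does not prove this statement at all: it is quoted verbatim as \cite{MS1975}*{Theorem 2.3}, so the ``paper's proof'' is simply the citation to Melin--Sj\"ostrand. Your outline therefore has to be judged against the original argument, and in broad strokes it does follow the same strategy (almost analytic extension, deformation of the real contour through the complex critical point $Z(w)$, Gaussian computation with the branch of the square root fixed by the stated homotopy, Taylor expansion of the cubic remainder to generate the operators $C_{f,j}$ of order $\leq 2j$).

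There is, however, one genuine gap, and it sits exactly at the point you yourself identify as ``the main obstacle.'' With the contour $\Gamma_w=\{x+i\chi(x)\,\mathrm{Im}\,Z(w)\}$, the Stokes error involves $t\,e^{-t\,\mathrm{Im}\tilde f(z,w)}\,|\bar\partial\tilde f|$ integrated over the interpolating $(n+1)$-chain, and what you need is a \emph{lower} bound on $\mathrm{Im}\,\tilde f(z,w)$ along that entire chain, comparable to $\mathrm{Im}\,\tilde f(Z(w),w)$. Expanding $\tilde f(x+iy,w)\approx f(x,w)+i\langle y,\partial_xf(x,w)\rangle+O(|y|^2)$ shows that $\mathrm{Im}\,\tilde f\approx \mathrm{Im}f(x,w)+\langle y,\mathrm{Re}\,\partial_xf(x,w)\rangle+O(|y|^2)$; for a constant imaginary displacement $y=\sigma\chi(x)\,\mathrm{Im}Z(w)$ the linear term can be negative of order $|\mathrm{Im}Z(w)|\,|\partial_xf|$ at points where $\mathrm{Im}f$ is small but $\mathrm{Re}\,\partial_x f\neq 0$, which produces a factor $e^{+ct|\mathrm{Im}Z(w)|}$ that overwhelms the permitted error $e^{-t\,\mathrm{Im}\tilde f(Z(w),w)}\gtrsim e^{-Ct|\mathrm{Im}Z(w)|^2}$. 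The inequality you invoke from Proposition \ref{complex critical point} bounds $\mathrm{Im}\,\tilde f$ \emph{at the critical point} from below and so does not control the chain. The fix in Melin--Sj\"ostrand is to deform in the direction $i\,\overline{\partial_x f(x,w)}$ (suitably scaled), so that $\mathrm{Im}\,\tilde f$ \emph{gains} a term $\sim\epsilon|\partial_xf(x,w)|^2$ along the deformation; combined with the two-sided comparison between $\mathrm{Im}\,\tilde f(Z(w),w)$ and $\inf_x\big(\mathrm{Im}f+|d_xf|^2\big)$, this closes the estimate (an equivalent route is the almost analytic Morse lemma reducing to an exactly quadratic phase). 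With that correction your plan becomes the standard proof; the Gaussian moment bookkeeping in the last step is fine as written.
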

We note that all the discussion above can be generalize to the case on manifolds.
\section{Asymptotics for lower energy torus equivariant Szeg{\H{o}} kernel}
In this section we study the asymptotics for lower energy torus equivariant Szeg{\H{o}} kernel. First, we fix a number $\lambda>0$. Denote 
$$
\mathcal{H}^q_{b,\leq\lambda,m,mp_1,\cdots,mp_d}(X)=\bigoplus_{0\leq\mu\leq\lambda}\mathcal{H}^q_{b,\mu,m,mp_1,\cdots,mp_d}(X)
$$
where the space $\mathcal{H}^q_{b,\mu,m,mp_1,\cdots,mp_d}(X)$ is defined as in Theorem \ref{Hodge theorem for Kohn's Laplacian}. Apply Theorem \ref{Hodge theorem for Kohn's Laplacian}, it is clear that $\mathcal{H}^q_{b,\leq\lambda,m,mp_1,\cdots,mp_d}(X)$  is still a finite dimensional subspace of $\Omega^{(0,q)}_{m,mp_1,\cdots,mp_d}(X)$. Consider the spectral projector
$$
\Pi^{(q)}_{\leq\lambda,m,mp_1,\cdots,mp_d}:L^2_{(0,q)}(X)\to\mathcal{H}^q_{b,\leq\lambda,m,mp_1,\cdots,mp_d}(X).
$$
Denote $\Pi^{(q)}_{\leq\lambda,m,mp_1,\cdots,mp_d}(x,y)$ to be the distribution kernel of the spectral projector $\Pi^{(q)}_{\leq\lambda,m,mp_1,\cdots,mp_d}$. Let $N_q:=\dim_{\mathbb{C}}\mathcal{H}^q_{b,\leq\lambda,m,mp_1,\cdots,mp_d}(X)<\infty$ and $\{f_{\leq\lambda,j}^q\}_{j=1}^{N_q}$ be an orthonormal basis for the space $\mathcal{H}^q_{b,\leq\lambda,m,mp_1,\cdots ,mp_d}(X)$. Define the torus equivariant Szeg{\H{o}} kernel function on lower energy forms by
\begin{equation}
\label{low energy reproducing kernel at diagonal}
        {\rm Tr}\Pi^{(q)}_{\leq\lambda,m,mp_1,\cdots ,mp_d}(x)
        :=\sum_{j=1}^{N_q}|f_{\leq\lambda,j}^q(x)|_h^2:=\sum_{j=1}^{N_q}\left\langle f^q_{\leq\lambda,j}(x)\middle|f^q_{\leq\lambda,j}(x)\right\rangle
\end{equation}
where $\{f^q_{\leq\lambda,j}\}_{j=1}^{N_q}$ is an orthonormal basis for $\mathcal{H}^q_{b,\leq\lambda,m,mp_1,\cdots,mp_d}(X)$.
We note that the relation (\ref{reproducing kernel at diagonal}) and (\ref{diagonal kernel is kernel function}) in Section 2.1 also holds here: for an open set $D\subset X$, we take $\{e_j(x)\}_{j=1}^n$ varying smoothly in $x\in D$ such that $\{e_j(x)\}_{j=1}^n$ form an orthonormal basis of $T^{0,1}_x X$ for every $x\in D$. For a strictly increasing index set $J=\{j_1,\cdots,j_q\}$ with $|J|=q$, if we write $e^J:=e_{j_1}\wedge\cdots\wedge e_{j_q}$, then
\begin{equation}
                \Pi^{(q)}_{\leq\lambda,m,mp_1,\cdots ,mp_d}(x,y)
                ={\sum_{|I|=q}}'{\sum_{|J|=q}}'\Pi^{(q)}_{\leq\lambda,m,mp_1,\cdots ,mp_d,I,J}(x,y)e^I(x)\otimes\left(e^J(y)\right)^*
        \end{equation}
in the sense that
        \begin{equation}
                    \Pi^{(q)}_{\leq\lambda,m,mp_1,\cdots ,mp_d}u(x)={\sum_{|I|=q}}'{\sum_{|J|=q}}'\left(\int_D\Pi^{(q)}_{\leq\lambda,m,mp_1,\cdots ,mp_d,I,J}(x,y)u_J(y)dV_X(y)\right)e^I(x).
        \end{equation}
        for $u=\sum_{{|J|=q}}'u_J e^J\in\Omega^{(0,q)}(X)$. 
        We can check that
        \begin{equation}
        \label{low energy diagonal kernel is low energy kernel function}
             {\sum_{|J|=q}}'\left\langle\Pi^{(q)}_{\leq\lambda,m,mp_1,\cdots ,mp_d}(x,x) e^J(x)|e^J(x)\right\rangle={\rm Tr}\Pi^{(q)}_{\leq\lambda,m,mp_1,\cdots ,mp_d}(x).
        \end{equation}
        Here, $\Pi^{(q)}_{\leq\lambda,m,mp_1,\cdots ,mp_d,I,J}(x,y)\in\mathscr{C}^\infty(D\times D)$ for all strictly increasing index set $I,J$, $|I|=|J|=q$. Moreover, we can also check that for all strictly increasing index set $I,J$, $|I|=|J|=q$,
        \begin{equation}
        \label{low energy reproducing kernel in x and y}
                \Pi^{(q)}_{\leq\lambda,m,mp_1,\cdots ,mp_d,I,J}(x,y)
                =\sum_{j=1}^{N_q}f^q_{\leq\lambda,j,I}(x)\overline{f^q_{\leq\lambda,j,J}}(y),
        \end{equation}
where $f^q_{\leq\lambda,j}={\sum'_{|K|=q}}f^q_{\leq\lambda,j,K}e^K$, $j=1,\cdots,N_q$, is an orthonormal basis for $\mathcal{H}^q_{b,\leq\lambda,m,mp_1,\cdots,mp_d}(X)$. 
        
We divide our discussion of Szeg\H{o} kernel on space lower energy forms into the cases of the one away from $Y$ and the one near $Y$, where $Y:=\mu^{-1}(-p_1,\cdots,-p_d)$ define by the torus invariant CR moment map
$$
\mu:X\to\mathbb{R}^d,~\mu(x):=\bigg(\langle\omega_0(x),T_1(x)\rangle,\cdots,\langle\omega_0(x),T_d(x)\rangle\bigg),
$$
satisfying Assumption \ref{assumption 1}, \ref{assumption 2} and \ref{assumption 3}. On one hand, for the case away from $Y$, we can estimate the bound $\sup\limits_{\substack{x\in D\\D\cap Y=\emptyset}}|u(x)|$ for the functions $u\in\mathcal{H}^q_{b,\leq\lambda,m,mp_1,\cdots,mp_d}(X)$ with $\|u\|_X^2:=(u|u)=1$ by Bochner formula of $\Box^{(q)}_b$ and some standard PDEs argument. Combine with the relation (\ref{low energy reproducing kernel in x and y}), we can show that:
\begin{theorem}
\label{Main theorem 2}
For any open set $\Omega$ containing $Y$, 
$$
\Pi^{(q)}_{\leq\lambda,m,mp_1,\cdots,mp_d}(x,y)=O(m^{-\infty})
$$
on $(X\setminus\Omega)\times(X\setminus\Omega)$ if $q\in\{0,\cdots,n\}$.
\end{theorem}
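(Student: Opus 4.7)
The strategy is to reduce the kernel bound to a pointwise super-polynomial decay statement for unit eigenforms on $X\setminus Y$, then extract the decay from the moment-map nondegeneracy combined with the Bochner--Kodaira--Kohn formula and the equivariance relations.

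\emph{Reduction to pointwise bounds.} By the reproducing expansion (\ref{low energy reproducing kernel in x and y}) and Cauchy--Schwarz, together with a polynomial-in-$m$ upper bound on $N_q=\dim\mathcal{H}^q_{b,\leq\lambda,m,mp_1,\cdots,mp_d}(X)$ (obtained by a Weyl count using ellipticity of $\Box_b^{(q)}-T_0^2$), the theorem reduces to: for every compact $K\subset X\setminus Y$ and every $k,N\in\mathbb{N}_0$ there is $C=C(K,k,N)>0$ such that $\|u\|_{C^k(K)}\leq C\,m^{-N}\|u\|_X$ for every $u\in\mathcal{H}^q_{b,\leq\lambda,m,mp_1,\cdots,mp_d}(X)$.

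\emph{Equations from equivariance and moment map.} The relations $-iT_0u=mu$ and $-iT_ju=mp_ju$ imply that $R_j:=T_j-p_jT_0$ annihilates $u$ for each $j=1,\ldots,d$. Using $\langle\omega_0,T_0\rangle=-1$ and $\langle\omega_0,T_j\rangle=\mu_j$, decompose
\[
R_j=W_j-(\mu_j+p_j)T_0,\qquad W_j:=T_j+\mu_jT_0\in\mathscr{C}^\infty(X,T^{1,0}X\oplus T^{0,1}X),
\]
where $W_j$ is real. Combining $R_ju=0$ with $-iT_0u=mu$ yields the pointwise identity
\[
W_ju=im(\mu_j+p_j)u\qquad\text{on }X.
\]
By Assumption~\ref{assumption 1} and compactness, any compact $K\subset X\setminus Y$ admits a finite cover by open sets $U_\beta$ on each of which some $|\mu_{j(\beta)}+p_{j(\beta)}|\geq c_\beta>0$; it suffices to prove the $C^k$-decay on each $U_\beta$.

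\emph{Initial $L^2$-gain and bootstrap.} The Bochner--Kodaira--Kohn formula (Theorem~\ref{Bochner formula}), combined with $(\Box_bu\mid u)\leq\lambda\|u\|_X^2$ and with $-iT_0u=mu$ used to evaluate the commutators $[L_k,L_k^*]$, gives
\[
\sum_k\|L_ku\|_X^2+\sum_k\|\overline{L}_ku\|_X^2\leq C\,m\,\|u\|_X^2,
\]
whence $\|W_ju\|_X^2\leq C\,m\,\|u\|_X^2$ since $W_j$ is a smooth combination of the $L_k$ and $\overline{L}_k$. On the other hand, on $U_\beta$,
\[
\|W_ju\|_{L^2(U_\beta)}^2=m^2\int_{U_\beta}(\mu_j+p_j)^2|u|^2\,dV_X\geq c_\beta^2\,m^2\,\|u\|_{L^2(U_\beta)}^2,
\]
producing the initial gain $\|u\|_{L^2(U_\beta)}^2\leq C\,m^{-1}\|u\|_X^2$. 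One iteratively enhances this via a nested cutoff hierarchy $\chi_\ell\in\mathscr{C}^\infty_0(U_\beta)$ and integration by parts against $R_j$ applied to derivatives of $u$; since $\Box_b$ commutes with $T_0,T_1,\ldots,T_d$, the derivatives remain in comparable equivariant spaces. After $N$ iterations this yields $\|u\|_{L^2(U_\beta')}\leq C_N\,m^{-N}\|u\|_X$ for any $N$ and any $U_\beta'\subset\subset U_\beta$. Applying the same bootstrap to $T_0^aT_j^b u$ and invoking Sobolev embedding, together with the elliptic estimate for $\Box_b^{(q)}-T_0^2$, converts the $L^2$-bound to the required $C^k$-bound, completing the proof.

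\emph{Main obstacle.} The bootstrap is the technical crux: a naive iteration of $W_j$ matches a lower bound $\|W_j^Nu\|_{U_\beta'}\gtrsim m^N\|u\|_{U_\beta'}$ with an essentially equal upper bound $\|W_j^Nu\|_X\lesssim m^N\|u\|_X$ coming from the a priori $H^N$-estimate, yielding no net gain. The iteration succeeds only by exploiting that $W_j$ lies in the CR directions $T^{1,0}X\oplus T^{0,1}X$ rather than the Reeb direction, so that the relevant CR-subelliptic norms of $u$---controlled via iterated BKK---grow strictly more slowly in $m$ than the full $H^s$-norms, allowing a strict $m^{-1}$-gain per step. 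Careful choice of IBP together with the nested cutoffs is required to absorb the transition terms $(W_j\chi_\ell)u$ into the a priori bound and prevent boundary losses across iterations.
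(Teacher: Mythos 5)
Your setup coincides with the paper's: the reduction to pointwise super-polynomial decay of unit eigenforms on compacts of $X\setminus Y$, the vector field $W_j=T_j+\langle\omega_0,T_j\rangle T_0\in\mathscr{C}^\infty(X,T^{1,0}X\oplus T^{0,1}X)$ (the paper's $F$), the pointwise identity $W_ju=im(p_j+\langle\omega_0,T_j\rangle)u$, and the first-order subelliptic estimate from Bochner--Kodaira--Kohn giving the initial $L^2$-gain. (Two minor remarks: no dimension bound on $N_q$ is needed, since by the Riesz/peak-function characterization $\sum_j|\partial^\alpha f_j(x)|^2=\sup\{|\partial^\alpha u(x)|^2:\|u\|_X=1\}$; and locating the sets $U_\beta$ uses only continuity of $\mu$ and the definition of $Y$, not Assumption~\ref{assumption 1}. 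Also, localizing \emph{before} applying the subelliptic estimate, as the paper does with $F(\chi u)$, already yields $\|\chi u\|_X\lesssim m^{-1}$ at the first step rather than your $m^{-1/2}$.)

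The genuine gap is the bootstrap, which you yourself flag as the crux but do not carry out. The mechanism you propose --- that iterating $W_j$ can be played against higher-order CR-subelliptic estimates of the form $\|L_{k_1}\cdots L_{k_N}u\|_X\lesssim m^{N/2}\|u\|_X$, so that ``CR-subelliptic norms grow strictly more slowly than full $H^s$-norms'' --- is an unproved assertion, and it is the hard part of that route: commuting $L_k$ through $\Box_b$ reintroduces $T_0$ via $[L_j,\overline{L}_k]$, each occurrence costing a full power of $m$, so these uniform higher-order estimates require a separate inductive argument you have not supplied. The paper's bootstrap avoids higher-order estimates entirely: one re-runs the \emph{same} first-order estimate with a nested cutoff $\tau\equiv 1$ on ${\rm supp}(\chi)$, arranging every error term to be either $\|\tau u\|_X^2$ (already $O(m^{-2k})$ from the previous iteration) or a cross term $\|u\|_X\cdot\|\chi u\|_X$; the cross term limits the gain to half a power of $m$ per iteration (so your claimed ``strict $m^{-1}$-gain per step'' is also not what happens), but iterating gives any power. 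As written, your proof is incomplete at exactly this step. A further small repair: in the passage to $C^k$-bounds, applying the bootstrap to $T_0^aT_j^bu$ is vacuous since these are scalar multiples of $u$; what is needed is the G\r{a}rding inequality for $(\Box_b^{(q)}-T_0^2)^k$ applied to $\chi u$, with the commutator $[(\Box_b^{(q)}-T_0^2)^k,\chi]u$ controlled by the crude global bound $\|u\|_{2k-1}=O(m^{2k-1})$ paired against the already-small $\|\chi u\|_X$.
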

On the other hand, for the case near $Y$, if we also assume the Levi form is positive near $Y$, then we can apply the Boutet--Sj\"ostrand type theorem \cite{HsiaoMarinsecu2017}*{Theorem 4.1}, i.e.~ in local picture $\Pi^{(q)}_{\leq\lambda}(x,y)$ is in the form of a complex phase Fourier integral operator, and we can study the asymptotic behavior of $\Pi^{(q)}_{\leq\lambda,m,mp_1,\cdots,mp_d}(x,y)$ using integration by parts and Melin--Sj\"ostrand complex stationary phase formula. Precisely, we have
\begin{theorem}
\label{Main theorem 3}
For each $p\in Y$, we can find a neighborhood denoted by $D_p$, such that 
$$
\Pi^{(q)}_{\leq\lambda,m,mp_1,\cdots,mp_d}(x,y)=O(m^{-\infty})
$$
on $D_p\times D_p$ if $q\in\{1,\cdots,n\}$. Finally, on $D_p\times D_p$,
$$
\Pi^{(0)}_{\leq\lambda,m,mp_1,\cdots,mp_d}(x,y)\equiv e^{im f(x,y)}b(x,y,m)~{\rm mod}~O(m^{-\infty}).
$$
Here, the phase function $f\in\mathscr{C}^\infty(D_p\times D_p)$ satisfies ${\rm Im}f\geq 0$, $f(x,x)=0$ for all $x\in Y\cap D_p$ and $d_x f(x,x)=-\omega_0(x)$, $d_y f(x,x)=\omega_0(x)$ for all $x\in Y\cap D_p$;
also, the symbol satisfies
\[
\begin{split}
\quad
&b(x,y,m)\in S^{n-\frac{d}{2}}_{{\rm loc}}(1;D_p\times D_p)\\
&b(x,y,m)\sim\sum_{j=0}^\infty m^{n-\frac{d}{2}-j}b_j(x,y)~\text{in}~S^{n-\frac{d}{2}}_{{\rm loc}}(1;D_p\times D_p)
\end{split}
\]
where $b_j(x,y)\in\mathscr{C}^\infty(D_p\times D_p)$, $j=0,1,2,\cdots$ and $b_0(x,x)>0$ for all $x\in Y\cap D_p$.
\end{theorem}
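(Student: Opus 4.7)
My plan is to express the torus equivariant lower-energy Szeg\H{o} projector as a $T^{d+1}$-average of the non-equivariant one, insert the Boutet de Monvel--Sj\"ostrand type description of $\Pi^{(q)}_{\leq\lambda}$ near a Levi-positive point, and then apply the Melin--Sj\"ostrand complex stationary phase formula in the group variables. Since $\Pi^{(q)}_{\leq\lambda}$ commutes with the $S^1\times T^d$-action, Fourier decomposition yields
\begin{equation*}
\Pi^{(q)}_{\leq\lambda,m,mp_1,\ldots,mp_d}(x,y)=\frac{1}{(2\pi)^{d+1}}\int_{T^{d+1}}e^{-im(\theta_0+p_1\theta_1+\cdots+p_d\theta_d)}\,\Pi^{(q)}_{\leq\lambda}\bigl(x,g(\underline{\theta})^{-1}\circ y\bigr)\,d\underline{\theta},
\end{equation*}
with $g(\underline{\theta})=(e^{i\theta_0},\ldots,e^{i\theta_d})$. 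I would take $D_p\ni p$ small enough that its entire $T^{d+1}$-orbit lies in an open neighborhood of $Y$ on which the Levi form is positive and the action is free (Assumptions \ref{assumption 2} and \ref{assumption 3}). On that neighborhood, Theorem 4.1 of \cite{HsiaoMarinsecu2017} provides $\Pi^{(q)}_{\leq\lambda}\in\mathscr{C}^\infty$ for $q\geq 1$, while
\begin{equation*}
\Pi^{(0)}_{\leq\lambda}(x,y)\equiv\int_0^{+\infty}e^{it\varphi(x,y)}s(x,y,t)\,dt~{\rm mod}~\mathscr{C}^\infty,
\end{equation*}
where $s\in S^n_{\rm cl}$ in $t$ with $s(x,x,t)>0$ for large $t$, and $\varphi$ satisfies ${\rm Im}\,\varphi\geq 0$, $\varphi(x,x)=0$, $d_x\varphi(x,x)=-\omega_0(x)$, $d_y\varphi(x,x)=\omega_0(x)$.

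For $q\geq 1$ the integrand is smooth in $\underline{\theta}$ and independent of $m$, so iterated integration by parts in $\theta_0$ against $e^{-im\theta_0}$ immediately yields $\Pi^{(q)}_{\leq\lambda,m,mp_1,\ldots,mp_d}(x,y)=O(m^{-\infty})$ on $D_p\times D_p$. For $q=0$, inserting the oscillatory representation and rescaling $t=m\sigma$ (with Jacobian $m\,d\sigma$) converts the averaging formula into an oscillatory integral in $d+2$ variables,
\begin{equation*}
\Pi^{(0)}_{\leq\lambda,m,mp_1,\ldots,mp_d}(x,y)\equiv\iint e^{im\tilde{\Phi}(x,y,\sigma,\underline{\theta})}\,\tilde{a}(x,y,\sigma,\underline{\theta},m)\,d\sigma\,d\underline{\theta}~{\rm mod}~O(m^{-\infty}),
\end{equation*}
with phase $\tilde{\Phi}(x,y,\sigma,\underline{\theta})=\sigma\,\varphi\bigl(x,g(\underline{\theta})^{-1}\circ y\bigr)-\theta_0-\sum_{j=1}^d p_j\theta_j$ and classical symbol $\tilde{a}$ of order $n$ in $m$; a cutoff in $\sigma$ near $0$ is harmless since the phase is non-stationary there.

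Applying Theorem~\ref{MS formula} in $(\sigma,\underline{\theta})$, differentiating $\tilde{\Phi}$ at $g=e$ and using $d_y\varphi(x,x)=\omega_0(x)$, $\omega_0(T_0)=-1$, $\omega_0(T_j)=\mu_j$ shows that the real critical equation forces $(\sigma,\underline{\theta})=(1,0)$, $x=y$, and $\mu(y)=(-p_1,\ldots,-p_d)$, i.e.\ $x=y\in Y\cap D_p$, consistent with Assumption~\ref{assumption 1}. Defining $f(x,y)$ as the complex critical value of $\tilde{\Phi}$ via Proposition~\ref{complex critical point}, the envelope theorem gives $f(x,x)=0$ and $d_x f(x,x)=-\omega_0(x)$, $d_y f(x,x)=\omega_0(x)$ on $Y\cap D_p$, while ${\rm Im}\,f\geq 0$ is inherited from ${\rm Im}\,\varphi\geq 0$. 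The Jacobian $m$, the symbol order $n$ of $s$, and the factor $m^{-(d+2)/2}$ from stationary phase in $d+2$ variables combine to place $b(x,y,m)$ in $S^{n-d/2}_{\rm loc}(1;D_p\times D_p)$ with the required asymptotic sum.

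The principal technical hurdle I anticipate is verifying non-degeneracy of the complex Hessian $\tilde{\Phi}''$ in $(\sigma,\underline{\theta})$ at each point of $Y\cap D_p$, and computing its determinant precisely enough to read off $b_0(x,x)>0$. The mixed entries $\partial_\sigma\partial_{\theta_j}\tilde{\Phi}$ compute to essentially the components of $(1,p_1,\ldots,p_d)$, whereas the $\theta\theta$-block is built from second derivatives of $\varphi$ along torus-orbit directions and decomposes naturally into a Levi-form contribution evaluated on images of $T_1,\ldots,T_d$ under an almost-analytic extension, plus a correction proportional to $d\mu$. Non-degeneracy then rests jointly on Assumption~\ref{assumption 1} (making the $d\mu$-correction of full rank $d$), Assumption~\ref{assumption 3} (positivity of the Levi-form part), and Assumption~\ref{assumption 2} (linear independence of $T_0,T_1,\ldots,T_d$ near $Y$). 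Working in adapted coordinates---BRT coordinates (Theorem~\ref{BRT patch}) for the $S^1$-action together with linearizing $T^d$-coordinates from Assumption~\ref{assumption 2}---should make this computation tractable and exhibit $b_0(x,x)>0$ explicitly.
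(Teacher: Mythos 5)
Your overall strategy---average the non-equivariant projector over the group, insert the Boutet de Monvel--Sj\"ostrand description of $\Pi^{(0)}_{\leq\lambda}$, rescale $t=m\sigma$, and apply Melin--Sj\"ostrand stationary phase in the group and fiber variables---is exactly the paper's strategy, up to a cosmetic difference: the paper does the $S^1$-average first (stationary phase in the two variables $(t,\theta_0)$, whose Hessian is trivially $\begin{bmatrix}0&-1\\-1&0\end{bmatrix}$) and then the $T^d$-average (stationary phase in $d$ variables), whereas you do a single $(d+2)$-variable stationary phase. The order count $m^{1}\cdot m^{n}\cdot m^{-(d+2)/2}=m^{n-d/2}$ comes out the same. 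However, there are two genuine gaps.

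First, your claim that \cite{HsiaoMarinsecu2017}*{Theorem 4.1} gives $\Pi^{(q)}_{\leq\lambda}\in\mathscr{C}^\infty$ for all $q\geq 1$ is false at $q=n$. With the Levi form positive near $Y$ the signature is $(n_-,n_+)=(0,n)$, so the projector is smoothing only for $q=1,\dots,n-1$; for $q=n$ it is again a complex Fourier integral operator, with phase $\phi_+=-\overline{\phi}_-$. Your integration-by-parts-against-a-smooth-kernel argument therefore does not cover $q=n$. The case $q=n$ still yields $O(m^{-\infty})$, but for a different reason: the $\theta_0$-derivative of the total phase is $-t-1$ (never zero for $t\geq 0$), so there is no critical point; making this rigorous requires integrating by parts inside the oscillatory $t$-integral, with a dyadic decomposition in $t$ and attention to the boundary term at $t=0$ (which is disposed of by a further integration by parts in $\theta_0$). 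This is Proposition \ref{q=n circle equiv kernel} in the paper and cannot be skipped.

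Second, you explicitly defer the verification that the complex Hessian of the phase in the group variables is non-degenerate at the critical points, together with the computation of $b_0(x,x)>0$. This is not a peripheral technicality; it is the mathematical heart of the theorem and the only place where all three Assumptions are actually used. The paper's argument reduces non-degeneracy of the $\theta\theta$-block to showing that the $2n\times d$ matrix $D=\bigl(\partial x'_a/\partial\theta_j\bigr)_{a\leq 2n}$ (the components of $T_1,\dots,T_d$ transverse to the Reeb direction) has full rank $d$, and then deduces this from: freeness of the action (rank of $[T_1,\dots,T_d]$ is $d$), the identity $T_0\lrcorner d\omega_0\equiv 0$, and $T_j\lrcorner d\omega_0\neq 0$, the last following from the regular-value hypothesis via Cartan's formula $\mathcal{L}_{T_j}\omega_0=T_j\lrcorner d\omega_0+d(T_j\lrcorner\omega_0)=0$. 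Your sketch gestures at the right ingredients but does not supply this chain of implications, and without it neither the existence of the expansion nor the positivity of $b_0$ is established. Relatedly, the properties $f(x,x)=0$, $d_xf(x,x)=-\omega_0(x)$ for \emph{all} $x\in Y\cap D_p$ (not just at the center $p$) need either the observation that the critical point is exactly real along $Y\cap D_p$ (so the critical-value formulas are exact there) or the paper's patching argument comparing two local representations; your one-line appeal to "the envelope theorem" should be expanded to one of these.
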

Finally, we show that the asymptotic behavior of kernel on lower energy forms Theorem \ref{Main theorem 2}, Theorem \ref{Main theorem 3} actually coincide with the one for genuine kernel Theorem \ref{Major theorem 1} as $m\to +\infty$. The precise statement is as followed:
\begin{theorem}
\label{Main theorem 4}
Under the same assumption in Theorem \ref{Major theorem 1}, then for any $\lambda>0$, as $m\to +\infty$
$$
\Pi^{(0)}_{m,mp_1,\cdots,mp_d}=\Pi^{(0)}_{\leq\lambda,m,mp_1,\cdots,mp_d}.
$$
\end{theorem}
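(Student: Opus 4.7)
The plan is to reduce the claim to the vanishing of $\mathcal{H}^1_{b,\leq\lambda,m,mp_1,\ldots,mp_d}(X)$ for $m$ large and then integrate the $q=1$ kernel estimates of Theorem \ref{Main theorem 2} and Theorem \ref{Main theorem 3}. By Theorem \ref{Hodge theorem for Kohn's Laplacian}, the difference $\Pi^{(0)}_{\leq\lambda,m,mp_1,\ldots,mp_d}-\Pi^{(0)}_{m,mp_1,\ldots,mp_d}$ is the orthogonal projection onto $\bigoplus_{0<\mu\leq\lambda}\mathcal{H}^0_{b,\mu,m,mp_1,\ldots,mp_d}(X)$, so it is enough to exclude positive eigenvalues $\mu\in(0,\lambda]$ for $m\gg 1$. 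For any eigenform $u\in\mathcal{H}^0_{b,\mu,m,mp_1,\ldots,mp_d}(X)$ with $\mu>0$, since $\Box_b^{(0)}=\overline{\partial}_b^{\ast}\overline{\partial}_b$ on $(0,0)$-forms, we have $\mu\,\|u\|_X^2=(\Box_b^{(0)}u\,|\,u)=\|\overline{\partial}_b u\|_X^2$, so $v:=\overline{\partial}_b u\neq 0$. Because the torus action is CR, $\overline{\partial}_b$ commutes with $T_0,T_1,\ldots,T_d$ and with $\Box_b$, so $v\in\mathcal{H}^1_{b,\mu,m,mp_1,\ldots,mp_d}(X)\subset\mathcal{H}^1_{b,\leq\lambda,m,mp_1,\ldots,mp_d}(X)$. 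Hence it suffices to prove that $\dim\mathcal{H}^1_{b,\leq\lambda,m,mp_1,\ldots,mp_d}(X)=0$ once $m$ is large.

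Next, I would combine the two kernel estimates to get a uniform pointwise bound on the diagonal. Since $Y$ is a compact submanifold of $X$, cover $Y$ by finitely many of the neighborhoods $D_{p_1},\ldots,D_{p_N}$ provided by Theorem \ref{Main theorem 3} and set $\Omega:=\bigcup_{j=1}^{N}D_{p_j}$. Then $X\setminus\Omega$ is compact and disjoint from $Y$. Theorem \ref{Main theorem 3} (with $q=1$) gives $\Pi^{(1)}_{\leq\lambda,m,mp_1,\ldots,mp_d}(x,y)=O(m^{-\infty})$ on each $D_{p_j}\times D_{p_j}$, while Theorem \ref{Main theorem 2} (with $q=1$) gives the same estimate on $(X\setminus\Omega)\times(X\setminus\Omega)$. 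Every $x\in X$ lies in some $D_{p_j}$ or in $X\setminus\Omega$, so restricting to the diagonal yields $\mathrm{Tr}\,\Pi^{(1)}_{\leq\lambda,m,mp_1,\ldots,mp_d}(x)=O(m^{-\infty})$ uniformly in $x\in X$.

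Finally, I would integrate this bound over the compact manifold $X$. Using (\ref{low energy diagonal kernel is low energy kernel function}) together with the definition (\ref{low energy reproducing kernel at diagonal}) and orthonormality of the basis $\{f^1_{\leq\lambda,j}\}$, I obtain
\[
\dim_{\mathbb{C}}\mathcal{H}^1_{b,\leq\lambda,m,mp_1,\ldots,mp_d}(X)=\int_X\mathrm{Tr}\,\Pi^{(1)}_{\leq\lambda,m,mp_1,\ldots,mp_d}(x)\,dV_X(x)=O(m^{-\infty}).
\]
Since the left-hand side is a non-negative integer, it is forced to vanish for all $m$ large enough, which by the reduction in the first paragraph yields $\Pi^{(0)}_{m,mp_1,\ldots,mp_d}=\Pi^{(0)}_{\leq\lambda,m,mp_1,\ldots,mp_d}$. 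The only potentially delicate step is the uniform patching of the two $O(m^{-\infty})$ estimates into a single estimate on the diagonal of $X$, but this is routine given the compactness of $Y$ (hence of $X\setminus\Omega$) and the freedom to shrink the neighborhood $\Omega$ in Theorem \ref{Main theorem 2}; everything else follows from the commutation $[\overline{\partial}_b,T_j]=0$ for $j=0,1,\ldots,d$ already recorded in Section 2.2.
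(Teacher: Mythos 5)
Your proposal is correct and follows essentially the same route as the paper: reduce to the vanishing of $\mathcal{H}^1_{b,\leq\lambda,m,mp_1,\cdots,mp_d}(X)$ for large $m$ via the commutation of $\overline{\partial}_b$ with $T_0,\cdots,T_d$ and $\Box_b$, and obtain that vanishing by integrating the diagonal trace of $\Pi^{(1)}_{\leq\lambda,m,mp_1,\cdots,mp_d}$ over the compact $X$ using the two $O(m^{-\infty})$ kernel estimates. Your write-up is in fact slightly more explicit than the paper's on the finite-cover patching of the two estimates and on why $\overline{\partial}_b u\neq 0$ for a positive eigenvalue, but the argument is the same.
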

\begin{proof}
Decompose the space
$$
\mathcal{H}^0_{b,\leq\lambda,m,mp_1,\cdots,mp_d}(X)=\mathcal{H}^0_{b,m,mp_1,\cdots,mp_d}(X)\oplus{\rm Span}_{0<\mu\leq\lambda} \mathcal{H}^0_{b,\mu,m,mp_1,\cdots,mp_d}(X).
$$
where
$$
\mathcal{H}^q_{b,\leq\lambda,m,mp_1,\cdots,mp_d}(X):=\bigoplus_{0\leq\mu\leq\lambda}\{u\in \Omega^{(0,q)}_{m,mp_1,\cdots,mp_d}(X):\Box_b^{(q)}u=\mu u\}.
$$ 
Apply Theorem \ref{Main theorem 3} and Theorem \ref{Main theorem 4} for the case $q=1$, then by the compactness of $X$ we know that 
$$
\dim_{\mathbb{C}}\mathcal{H}^1_{b,\leq\lambda,m,mp_1,\cdots,mp_d}(X)=\int_X {\rm Tr}\Pi^{(1)}_{\leq\lambda,m,mp_1,\cdots ,mp_d}(x)dV_X(x)\leq C_N m^{-N}.
$$
After fixing an $N\in\mathbb{N_0}$, we know that as $m\to +\infty$
$$
\mathcal{H}^1_{b,\leq\lambda,m,mp_1,\cdots,mp_d}(X)=\{0\}.
$$
 Since the group action is required to be CR, $T_j\overline{\partial}_b=\overline{\partial}_bT_j$ for all $j=0,1,\cdots,d$. Combine this with $\Box^{(q+1)}_b\overline{\partial}_b=\overline{\partial}_b \Box^{(q)}_b$, we can find that for any $u\in\mathcal{H}^0_{b,\mu,m,mp_1,\cdots,mp_d}(X)$, $0<\mu\leq\lambda$,
$$
\overline{\partial}_b u\in\mathcal{H}^1_{b,\leq\lambda,m,mp_1,\cdots,mp_d}(X).
$$
for $m$ large enough. However, this means that for some $0<\mu\leq\lambda$, for $m$ large enough
$$
\mu u=\Box^{(0)}_b u=\overline{\partial}_b^*\left(\overline{\partial}_b u\right)=0,
$$
i.e.~ ${\rm Span}_{0<\mu\leq\lambda} \mathcal{H}^0_{b,\mu,m,mp_1,\cdots,mp_d}(X)=\{0\}$ as $m\to +\infty$. Therefore, as $m\to +\infty$
$$
\mathcal{H}^0_{b,\leq\lambda,m,mp_1,\cdots,mp_d}(X)=\mathcal{H}^0_{b,m,mp_1,\cdots,mp_d}(X).
$$
Thus, for any $\lambda>0$, as $m\to +\infty$
$$
\Pi^{(0)}_{m,mp_1,\cdots,mp_d}=\Pi^{(0)}_{\leq\lambda,m,mp_1,\cdots,mp_d}.
$$
\end{proof}
\subsection{The asymptotic behavior away from $Y$}
In this section we prove Theorem \ref{Main theorem 2}. Fix a number $\lambda\geq 0$ and a point $p\notin Y$. Since $Y$ is closed, there always exist a neighborhood $D_p$ near $p$ and a $j\in\{1,\cdots,d\}$ such that
$$
\langle \omega_0(x), T_j(x)\rangle\neq -p_j~\text{for every}~x\in D_p.
$$
We may assume $j=1$, i.e.~$p_1+\langle\omega_0(x),T_1(x)\rangle\neq 0$ for all $x\in D_p$. Consider the vector field 
$$
F:=T_1+\langle\omega_0,T_1\rangle T_0,
$$
which is in $\mathscr{C}^{\infty}\left(X,T^{1,0}X\oplus T^{0,1}X \right)$ because $\langle\omega_0,F\rangle=0$. We hence decompose $F=L+\overline{L}$, where the vector field $L\in\mathscr{C}^{\infty}\left(X,T^{1,0}X\right)$. From now on, we assume $u\in\mathcal{H}^q_{b,\leq\lambda,m,mp_1,\cdots,mp_d}(X)$ with $\|u\|_X^2=1$. Take a cut-off function
$\chi\in\mathscr{C}^\infty_0(D_p)$ with $\chi=1$ near $p$. By Fourier inversion formula, we can see the multiplication operator $\chi\cdot$ as a properly supported zero order pseudodifferential operator. Precisely, for any $v\in\mathscr{C}^\infty_0(D_p)$
$$
\chi v(x)=\frac{1}{(2\pi)^{2n+1}}\iint e^{i\langle(x-y),\xi\rangle}\chi(x)v(y)dy d\xi.
$$
If we also regard $F$ as a first order differential operator, then
\begin{equation}
\label{kernel away from Y}
    \begin{split}
        F(\chi u)
        &=\chi(Fu)+[F,\chi]u\\
        &=\left(im(p_1+\langle\omega_0,T_1\rangle\right)\chi u+[F,\chi]u,
    \end{split}
\end{equation}
Cause we assume $p_1+\langle\omega_0,T_1\rangle\neq 0$ on $D_p$, from (\ref{kernel away from Y}) there are constants $C,C_0>0$ such that 
\begin{equation}
\label{a priori L2 estimate}
\begin{split}
\|\chi u\|_X^2
&\leq\frac{1}{C m^2}\left(\|F(\chi u)\|_X^2+\|[F,\chi]u\|_X^2\right)\\
&\leq\frac{1}{C m^2}\left(\|L(\chi u)\|_X^2+\|\overline{L}(\chi u)\|_X^2+C_0\|u\|_X^2\right),
\end{split}
\end{equation}
where $[\cdot,\cdot]$ denotes the commutator between differential operators and we see $[F,\chi]$ as an order $0$ properly supported pseudodifferential operator admitting $L^2$ continuity (See \cite{GrigisSjostrand1994}*{Theorem 3.6 and Theorem 4.5} for example). From now on, we use notations such as $C_0, C_1, C_2,\cdots$ or $C', C'',\cdots$ to denote positive constants independent of $m$. We need the following $L^2$-estimate to control (\ref{a priori L2 estimate}):
\begin{proposition}
\label{L^2 estimate of Kohn's laplacian}
For a given vector filed $Z\in\mathscr{C}^\infty(X,T^{1,0}X)$, there is a positive constant $C'=C'(Z)$ such that
$$
\|Z\phi\|_X^2+\|\overline{Z}\phi\|_X^2\leq C'\left(\left(\Box_b^{(q)}\phi\middle|\phi\right)+|(T_0\phi|\phi)|+\|\phi\|_X^2\right).
$$
for every $\phi\in\Omega^{(0,q)}_0(D_p)$.
\end{proposition}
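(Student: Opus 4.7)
The plan is to read off the inequality from the Bochner--Kodaira--Kohn formula (Theorem \ref{Bochner formula}) paired with $\phi$, absorbing all first-order error terms via Cauchy--Schwarz with small constant and all zero-order terms into $\|\phi\|_X^2$. Since $\phi$ is compactly supported in $D_p$, I may work in a local orthonormal frame $\{L_j\}_{j=1}^n$ of $T^{0,1}X$ with dual coframe $\{e_j\}_{j=1}^n$ of $T^{*0,1}X$. Expanding $Z\in\mathscr{C}^\infty(X,T^{1,0}X)$ as $Z=\sum_j a_j\overline{L_j}$ with $a_j$ smooth, and using $[a_j,\overline{L_j}]$ as a zero-order operator, the problem reduces to estimating $\sum_j\|L_j\phi\|_X^2$ and $\sum_j\|\overline{L_j}\phi\|_X^2$ by the right-hand side.

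For the first sum, I would pair the Bochner formula with $\phi$:
$$
(\Box_b^{(q)}\phi|\phi)=\sum_{j=1}^n\|L_j\phi\|_X^2+\sum_{j,k=1}^n\left((e_j\wedge e_k^{\wedge,*})[L_j,L_k^*]\phi\,\big|\,\phi\right)+(\epsilon(L)\phi|\phi)+(\epsilon(\overline{L})\phi|\phi)+O(\|\phi\|_X^2).
$$
Writing $L_k^*=-\overline{L_k}+c_k$ for smooth $c_k$, the commutator becomes $[L_j,L_k^*]=-[L_j,\overline{L_k}]+(L_jc_k)$. By the definition of the Levi form and the identity $\langle\omega_0,T_0\rangle=-1$, the bracket $[L_j,\overline{L_k}]$ decomposes as $\gamma_{jk}T_0+A_{jk}$ with $\gamma_{jk}$ smooth and $A_{jk}\in\mathscr{C}^\infty(X,T^{1,0}X\oplus T^{0,1}X)$; thus its contribution to the pairing is controlled by $|(T_0\phi|\phi)|$ together with first-order terms of the form $(L_j\phi|\phi)$ and $(\overline{L_j}\phi|\phi)$. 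Each such term is bounded via Cauchy--Schwarz by $\epsilon\|L_j\phi\|_X^2+C_\epsilon\|\phi\|_X^2$, respectively $\epsilon\|\overline{L_j}\phi\|_X^2+C_\epsilon\|\phi\|_X^2$. The terms $(\epsilon(L)\phi|\phi)$ and $(\epsilon(\overline{L})\phi|\phi)$ are treated identically, integrating by parts in the $\overline{L_j}$ case to convert it into $L_j$ modulo a zero-order remainder. Moving the absorbed $\epsilon\sum_j\|L_j\phi\|_X^2$ to the left-hand side yields
$$
\sum_{j=1}^n\|L_j\phi\|_X^2\leq C\bigl((\Box_b^{(q)}\phi|\phi)+|(T_0\phi|\phi)|+\|\phi\|_X^2\bigr),
$$
provided I also handle the $\|\overline{L_j}\phi\|$-type first-order errors, which is done in the next step.

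For the second sum, I would use $\overline{L_j}=-L_j^*+c_j$ to get $\|\overline{L_j}\phi\|_X^2\leq C\|L_j^*\phi\|_X^2+C\|\phi\|_X^2$, and then integrate by parts:
$$
\|L_j^*\phi\|_X^2=(L_jL_j^*\phi|\phi)=\|L_j\phi\|_X^2+([L_j,L_j^*]\phi|\phi).
$$
As before, the commutator $[L_j,L_j^*]$ carries at worst a $T_0$-component plus first-order terms absorbable by Cauchy--Schwarz with small constant into $\sum\|L_j\phi\|_X^2$ and $\sum\|\overline{L_j}\phi\|_X^2$. Combining with the previous step,
$$
\sum_{j=1}^n\|\overline{L_j}\phi\|_X^2\leq C'\bigl((\Box_b^{(q)}\phi|\phi)+|(T_0\phi|\phi)|+\|\phi\|_X^2\bigr),
$$
and adding the two bounds together with $\|Z\phi\|_X^2+\|\overline{Z}\phi\|_X^2\leq C''\bigl(\sum\|L_j\phi\|_X^2+\sum\|\overline{L_j}\phi\|_X^2\bigr)+C''\|\phi\|_X^2$ finishes the proof.

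The main obstacle I anticipate is purely bookkeeping: when absorbing the first-order error terms produced by the commutators and by $\epsilon(L)$, $\epsilon(\overline{L})$ with small Cauchy--Schwarz constant, one must track simultaneously $\sum\|L_j\phi\|_X^2$ and $\sum\|\overline{L_j}\phi\|_X^2$, since the commutator $[L_j,L_k^*]$ produces terms of both types. The role of the Reeb direction $T_0$ is essential here: it is the unique transverse component through which $[L_j,\overline{L_k}]$ exits $T^{1,0}X\oplus T^{0,1}X$, and it is exactly this component, quantified by the Levi form, that forces the appearance of $|(T_0\phi|\phi)|$ on the right-hand side.
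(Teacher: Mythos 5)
Your proposal is correct and follows essentially the same route as the paper: first control the $(0,1)$-derivatives directly from the Bochner--Kodaira--Kohn formula by absorbing all first-order errors (including the commutator term, whose only transverse component is the $T_0$/Levi-form piece) via Cauchy--Schwarz with small constant, then obtain the $(1,0)$-derivatives by passing to adjoints and using $\|L^*\phi\|^2=\|L\phi\|^2+([L,L^*]\phi|\phi)$. The only difference is cosmetic: the paper lumps the commutator $\sum(e_j\wedge e_k^{\wedge,*})[L_j,L_k^*]$ into generic first-order coefficients $\sum a_jL_j+\sum b_j\overline{L}_j+cT_0$ from the outset, while you keep its Levi-form structure explicit.
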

\begin{proof}
First of all, for any $p\in X$, let $\{\overline{e}_j(x)\}_{j=1}^n$ varying smoothly
in $x$ near $p$ such that $\{\overline{e}_j(x)\}_{j=1}^n$ be an orthonomal frame of $T^{*0,1}_x X$ for all $x$ near $p$ and $\{\overline{L}_j\}_{j=1}^n$ be its dual frame on $T^{0,1}_xX$, $x$ is near $p$. Since $L_j$'s are first order differential operators, using integration by parts we can find the formal adjoint $\overline{L}_j^*$'s and $L_j^*$'s of $\overline{L}_j$'s and $L_j$ are
$$
\overline{L}_j^*=-L_j+E_j,~L_j^*=-\overline{L}_j+\overline{E}_j,
$$
respectively, where $E_j$ are some terms of zero order.
Now, we rewrite Theorem \ref{Bochner formula} into the form
$$
\Box^{(q)}_b=\sum_{j=1}^n \overline{L}_j^*\overline{L}_j+\sum_{j=1}^n a_j L_j+\sum_{j=1}^n b_j\overline{L}_j+c T_0+d,
$$
where $a_j$, $b_j$, $c_j$ , $c$ and $d$ are smooth coefficeints, $j=1,\cdots,n$. Then for $\phi\in\mathscr{C}^\infty_0(D_p)$, 
\begin{equation*}
    \left(\Box^{(q)}_b\phi\middle|\phi\right)
    \geq \sum_{j=1}^n\|\overline{L}_j\phi\|_X^2-\sum_{j=1}^n|(a_j L_j\phi|\phi)|-\sum_{j=1}^n|(b_j\overline{L}_j\phi|\phi)|-|(cT_0\phi|\phi)|-|(d\phi|\phi)|
\end{equation*}
Note that for any $\epsilon>0$, we have
\begin{equation*}
    \begin{split}
        |(a_j L_j\phi|\phi)|
        &\leq|(L_j a_j\phi|\phi)|+|([a_j,L_j]\phi|\phi)|\\
        &\leq |(a_j\phi|L_j^*\phi)|+C_1\|\phi\|_X^2\\
        &\leq |(a_j\phi|\overline{L}_j\phi)|+|(a_j\phi|\overline{E}_j\phi)|+C_1\|\phi\|_X^2\\
        &\leq \frac{C_2}{\epsilon}\|\phi\|_X^2+\epsilon\|\overline{L}_j\phi\|_X^2+C_3\|\phi\|_X^2+C_1\|\phi\|_X^2,
    \end{split}
\end{equation*}
and
$$
|(b_j\overline{L}_j\phi|\phi)|\leq C_4\epsilon\|\overline{L}_j\phi\|_X^2+\frac{1}{\epsilon}\|\phi\|_X^2 ,
$$
and
$$
|(cT_0\phi|\phi)|\leq C_5 \|T_0\phi\|\cdot\|\phi\|,~|(d\phi|\phi)|\leq C_6\|\phi\|_X^2.
$$
Take $\epsilon$ small enough, then we get
\begin{equation}
    \label{half Kohn's estimate 1}
\|\overline{L}_j\phi\|^2\leq C_7\left((\Box^{(q)}_b\phi|\phi)+|(T_0\phi|\phi)|+\|\phi\|_X^2\right),
\end{equation}
and sum over $j$ we have
\begin{equation}
    \label{half Kohn's estimate 1.1}
\|\overline{Z}\phi\|^2\leq C_7'\left((\Box^{(q)}_b\phi|\phi)+|(T_0\phi|\phi)|+\|\phi\|_X^2\right).
\end{equation}
So it remains to estimate $\|L_j\phi\|_X^2$ and $\|Z\phi\|_X^2$.
Observe that
\begin{equation*}
    \begin{split}
        \|L_j\phi\|_X^2
        &=|(L_j^*L_j\phi|\phi)|\\
        &\leq |(L_jL_j^*\phi|\phi)|+|([L_j^*,L_j]\phi|\phi)|\\
        &\leq \|L_j^*\phi\|_X^2+\left|\left(\left(\sum_{k=1}^n f_k L_k+\sum_{k=1}^n g_k \overline{L}_k)+ cT_0\right)\phi\middle|\phi\right)\right|\\
        &\leq \left(\|\overline{L}_j\phi\|_X^2+C_8\|\phi\|_X^2\right)+   C_9\left(\epsilon\|L\phi\|_X^2+\frac{1}{\epsilon}\|\phi\|_X^2+(\|\overline{L}\phi\|_X^2+\|\phi\|_X^2)+|(T_0\phi|\phi)|\right),
    \end{split}
\end{equation*}
where $f_k,~g_k,~c$ are some smooth coefficients. Choose $\epsilon$ small enough, sum over $j$, and apply (\ref{half Kohn's estimate 1}), then we can also get
\begin{equation}
\label{half Kohn's estimate 2}
    \|Z\phi\|_X^2\leq C_{10}\left((\Box^{(q)}_b\phi|\phi)+|(T_0\phi|\phi)|+\|\phi\|_X^2\right).
\end{equation}
Hence,
$$
\|Z\phi\|_X^2+\|\overline{Z}\phi\|_X^2\leq C'\left((\Box_b^{(q)}\phi|\phi)+|(T_0\phi|\phi)|+\|\phi\|_X^2\right).
$$
\end{proof}
Recall that from (\ref{a priori L2 estimate}) we have
\begin{equation}
\label{second a priori L2 estimate}
    \|\chi u\|_X^2
\leq\frac{1}{C m^2}\left(\|L(\chi u)\|_X^2+\|\overline{L}(\chi u)\|_X^2+C_0\right)
\end{equation}
and now Proposition \ref{L^2 estimate of Kohn's laplacian} implies that
\begin{equation}
\label{Kohn's type estimate}
\|L(\chi u)\|_X^2+\|\overline{L}(\chi u)\|_X^2+C_0\\
        \leq{C'}\left((\Box_b^{(q)}(\chi u)|\chi u)+(T_0(\chi u)|\chi u)+\|\chi u\|_X^2\right)+C_0.
\end{equation}
We now estimate all terms in (\ref{Kohn's type estimate}) one by one. First, we have
\begin{equation}
\label{The first term}
    \begin{split}
        (\Box_b^{(q)}(\chi u)|\chi u)
        &=(\chi\Box^{(q)}_b u|\chi u)+([\Box^{(q)}_b,\chi]u|\chi u)\\
        &\leq\lambda\|\chi u\|_X^2+\left((\sum_{j=1}^n c_j L_j+\sum_{j=1}^n d_j\overline{L}_j+eT_0)u|\chi u\right)\\
        &\leq\lambda\|\chi u\|_X^2+\left|\left((\sum_{j=1}^n c_j L_j+\sum_{j=1}^n d_j\overline{L}_j+eT_0)u|\chi u\right)\right|\\
        &\leq\lambda\|\chi u\|_X^2+\sum_{j=1}^n\left|\left(c_j L_j u|\chi u\right)\right|+\sum_{j=1}^n\left|\left(d_j \overline{L}_j u|\chi u\right)\right|+\left|\left(eT_0 u|\chi u\right)\right|
    \end{split}
\end{equation}
for some smooth coefficeints $c_j,~d_j,~e$, $j=1,\cdots,n$. Note that
\begin{equation}
    \label{T_0 term}
    \left|\left(eT_0 u|\chi u\right)\right|=|(eu|m\chi u)|\leq \frac{C_{11}}{\epsilon}\|u\|_X^2+\epsilon m^2\|\chi u\|_X^2;
\end{equation}
and for all $j=1,\cdots,n$,
\begin{equation}
\label{L_j term}
\begin{split}
            \left|\left(c_j L_j u|\chi u\right)\right|
            &=\left|(c_j u|L_j^*(\chi u))\right|+\left|([c_j,L_j]u|\chi u)\right|\\
            &\leq\left(\frac{C_{12}}{\epsilon}\|u\|_X^2+\epsilon\left(\|\overline{L}_j(\chi u)\|_X^2+C_{13}\|\chi u\|_X^2\right)\right)+\left(\frac{C_{14}}{\epsilon}\|u\|_X^2+\epsilon\|\chi u\|_X^2\right)\\
            &=\epsilon\|\overline{L}_j(\chi u)\|_X^2+(C_{13}\epsilon+\epsilon)\|\chi u\|_X^2+\frac{C_{12}+C_{14}}{\epsilon}\|u\|_X^2,
\end{split}
\end{equation}
and similarly
\begin{equation}
    \label{bar L_j term}
\left|\left(d_j \overline{L}_j u|\chi u\right)\right|\leq\epsilon\|L_j(\chi u)\|_X^2+(C_{16}\epsilon+\epsilon)\|\chi u\|_X^2+\frac{C_{15}+C_{17}}{\epsilon}\|u\|_X^2.
\end{equation}
Second, 
\begin{equation}
\label{The second term}
    \begin{split}
        |(T_0(\chi u)|\chi u)|
        &\leq|(\chi T_0 u|\chi u)|+|([T_0,\chi]u|\chi u)|\\
        &\leq m\|\chi u\|_X^2+\|[T_0,\chi]u\|_X\cdot\|\chi u\|_X\\
        &\leq m\|\chi u\|_X^2+\left(\frac{C_{18}}{\epsilon}\|u\|_X^2+\epsilon\|\chi u\|_X^2\right).
    \end{split}
\end{equation}
Therefore, $\|L(\chi u)\|_X^2+\|\overline{L}(\chi u)\|_X^2$ is bounded above by
\begin{equation*}
     C'\left(\left(m+(m^2+C_{19})\epsilon\right)\|\chi u\|_X^2
     +\epsilon\left(\|L_j (\chi u)\|_X^2+\|\overline{L}_j (\chi u)\|_X^2\right)
     +C_{20}\epsilon^{-1}\|u\|_X^2\right).
\end{equation*}
for some constant $C'>0$ independent of $m$ and $u$.
Take $\epsilon$ small enough and sum over $j$, then when $m$ large enough, there is also a constant $C''>0$ independent of $m$ and $u$ such that
$$
\|L(\chi u)\|_X^2+\|\overline{L}(\chi u)\|_X^2\leq C''\left((m+\epsilon m^2)\|\chi u\|_X^2+\epsilon^{-1}\|u\|_X^2\right)
$$
holds. Back to the estimate (\ref{second a priori L2 estimate}), i.e.~ 
$$
Cm^2\|\chi u\|_X^2\leq \|L(\chi u)\|_X^2+\|\overline{L}(\chi u)\|_X^2+C_0.
$$
Recall that we assume $\|u\|_X^2=1$ here, so if
we take a suitably small $\epsilon$ such that $\epsilon <\frac{C}{2C''}$, then when $m$ large enough we can find
\begin{proposition}
\label{kernel is small near Y}
For a point $p\in Y$ and $D_p$ a neighborhood of $p$ with $D_p\cap Y=\emptyset$, if we fix a number $\lambda\geq 0$, then for each function $\chi\in\mathscr{C}^\infty_0(D_p)$ and $q$-form $u\in\mathcal{H}^q_{b,\leq\lambda,m,mp_1,\cdots,mp_d}(X)$ with $\|u\|_X^2=1$, we can find a constant $C_{0,1}>0$ independent of $m$ and $u$ such that as $m\to +\infty$
\[
\|\chi u\|_X\leq\frac{1}{C_{0,1}m}.
\]
\end{proposition}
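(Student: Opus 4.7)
The plan is to combine the chain of estimates already assembled in \eqref{second a priori L2 estimate}, \eqref{Kohn's type estimate}, and \eqref{The first term}--\eqref{The second term}, with careful absorption of error terms. First I would apply Proposition \ref{L^2 estimate of Kohn's laplacian} with the vector field $L$ to the test form $\chi u \in \Omega^{(0,q)}_0(D_p)$, which converts $\|L(\chi u)\|_X^2+\|\overline{L}(\chi u)\|_X^2$ into the combination $(\Box_b^{(q)}(\chi u)|\chi u) + |(T_0(\chi u)|\chi u)| + \|\chi u\|_X^2$.

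Next I would expand each of these three summands using the decisive inputs: the equivariance $-iT_0 u = mu$ and the spectral bound $\Box_b^{(q)} u = \mu u$ with $0\leq\mu\leq\lambda$. For the Kohn Laplacian pairing, pulling the cutoff past $\Box_b^{(q)}$ produces the eigenvalue term, bounded by $\lambda\|\chi u\|_X^2$, plus a commutator $[\Box_b^{(q)},\chi]$ which by Theorem \ref{Bochner formula} is a first-order operator built from $L_j$, $\overline{L}_j$ and $T_0$. Cauchy--Schwarz with a small parameter $\epsilon>0$ turns the resulting pairings into terms of the form $\epsilon\sum_j(\|L_j(\chi u)\|_X^2+\|\overline{L}_j(\chi u)\|_X^2) + O(\epsilon)\|\chi u\|_X^2 + O(\epsilon^{-1})\|u\|_X^2$. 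The $T_0$-piece is treated the same way, with the additional gain that $T_0$ acting on the equivariant form $u$ yields a factor $m$. Summing over $j$ and collecting gives
\[
\|L(\chi u)\|_X^2+\|\overline{L}(\chi u)\|_X^2 \leq C''\bigl((m+\epsilon m^2)\|\chi u\|_X^2 + \epsilon^{-1}\|u\|_X^2\bigr).
\]

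Finally I would feed this back into the a-priori estimate \eqref{second a priori L2 estimate} to obtain
\[
Cm^2\|\chi u\|_X^2 \leq C''\bigl((m+\epsilon m^2)\|\chi u\|_X^2 + \epsilon^{-1}\|u\|_X^2\bigr) + C_0\|u\|_X^2.
\]
Choosing $\epsilon < C/(2C'')$ allows the $\epsilon m^2$ term on the right to be absorbed into the left-hand side, and for $m$ sufficiently large the linear-in-$m$ term $C''m\|\chi u\|_X^2$ is absorbable as well. Using the normalization $\|u\|_X = 1$ then yields $\|\chi u\|_X^2 = O(m^{-2})$, which is the desired bound $\|\chi u\|_X \leq 1/(C_{0,1} m)$ for some positive constant $C_{0,1}$ independent of $m$ and $u$.

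The main obstacle is the $\epsilon$-bookkeeping: the Cauchy--Schwarz splittings inevitably introduce $\epsilon^{-1}\|u\|_X^2$ contributions, and the delicate point is that $\epsilon$ must be chosen small enough (uniformly in $m$) to beat the $\epsilon m^2$ term, but not so small that the $\epsilon^{-1}$ factor destroys the $m^{-2}$ gain. Since $\|u\|_X^2$ is fixed at $1$ and the threshold $\epsilon < C/(2C'')$ is $m$-independent, this balance is achievable. A secondary check is that $[F,\chi]$ acts as a zero-order operator whose $L^2$-norm is independent of $m$, which is immediate because $F$ is built from the torus generators alone and hence does not depend on $m$.
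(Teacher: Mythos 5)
Your proposal is correct and follows essentially the same route as the paper: the a priori bound \eqref{second a priori L2 estimate} coming from $Fu=im(p_1+\langle\omega_0,T_1\rangle)u$, Proposition \ref{L^2 estimate of Kohn's laplacian}, the term-by-term commutator estimates \eqref{The first term}--\eqref{The second term}, and the final absorption with $\epsilon<C/(2C'')$ and $\|u\|_X=1$. No gaps.
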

In fact, we can modify the above argument and improve the estimate Proposition \ref{kernel is small near Y}:
\begin{proposition}
    \label{kernel is small enough near Y}
Assume the same $p$, $D_p$, $u$ in Proposition \ref{kernel is small near Y}. For each $\chi\in\mathscr{C}^\infty_0(D_p)$ and any $N\in\mathbb{N}_0$, we can find a constant $C_{0,N}>0$ independent of $m$ and $u$ such that as $m\to +\infty$
  \[
\|\chi u\|_X\leq\frac{1}{C_{0,N} m^N}.
  \]  
\end{proposition}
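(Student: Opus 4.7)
The plan is to bootstrap the $O(m^{-1})$ estimate of Proposition \ref{kernel is small near Y} into an $O(m^{-N})$ estimate by repeating the very same argument on a nested family of cutoffs. Fix $N\in\mathbb{N}$ and choose open sets $D_p\supset U_0\supset\overline{U_0}\supset U_1\supset\cdots\supset\overline{U_{N-1}}\supset U_N\supset{\rm supp}\,\chi$, all disjoint from $Y$. Pick functions $\chi_0,\chi_1,\ldots,\chi_N\in\mathscr{C}^\infty_0(D_p)$ with ${\rm supp}\,\chi_{j+1}\subset\{\chi_j=1\}$ and $\chi_N=\chi$; in particular $\chi_j\equiv 1$ on ${\rm supp}\,d\chi_{j+1}$.

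The refined local estimate goes as follows. Carry out the argument leading to Proposition \ref{kernel is small near Y} verbatim, but with $\chi$ replaced by $\chi_{j+1}$. The key observation is that every appearance of $\|u\|_X^2$ on the right-hand sides of (\ref{a priori L2 estimate})--(\ref{The second term}) originates from a commutator $[P,\chi_{j+1}]u$, where $P$ is one of $F$, $\Box_b^{(q)}$, $T_0$, $L_k$, $\overline{L}_k$, or a zero-order operator; in every case the coefficients of $[P,\chi_{j+1}]$ are smooth and supported on ${\rm supp}\,d\chi_{j+1}$, hence they are unchanged by multiplication by $\chi_j$. Thus each such term admits the improved bound $\|[P,\chi_{j+1}]u\|_X^2\leq C\|\chi_j u\|_X^2$ with $C$ independent of $m$ and $u$. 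Repeating the sequence of inequalities (\ref{second a priori L2 estimate})--(\ref{The second term}) with this replacement, choosing $\epsilon>0$ small (independent of $m$) to absorb the $\epsilon m^2\|\chi_{j+1}u\|_X^2$ term, and letting $m$ be large enough to absorb the $m\|\chi_{j+1}u\|_X^2$ term, one obtains a constant $C_\star>0$ independent of $j$, $m$, and $u$ such that
\[
\|\chi_{j+1} u\|_X^2\leq\frac{C_\star}{m^2}\|\chi_j u\|_X^2
\qquad\text{for all } j=0,1,\ldots,N-1.
\]

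Iterating this inequality from $j=N-1$ down to $j=0$, and using $\|\chi_0 u\|_X^2\leq\|u\|_X^2=1$,
\[
\|\chi u\|_X^2=\|\chi_N u\|_X^2\leq\frac{C_\star^N}{m^{2N}}\|\chi_0 u\|_X^2\leq\frac{C_\star^N}{m^{2N}},
\]
so the conclusion follows with $C_{0,N}:=C_\star^{-N/2}$.

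The main obstacle is purely bookkeeping in the refined estimate: one must revisit each step of the proof of Proposition \ref{kernel is small near Y}, identify precisely which commutator is responsible for each appearance of $\|u\|_X^2$, and verify that the coefficients of that commutator are supported inside $\{\chi_j=1\}$, so that $u$ may be freely replaced by $\chi_j u$. This involves nothing more than rereading (\ref{kernel away from Y})--(\ref{The second term}) and the proof of Proposition \ref{L^2 estimate of Kohn's laplacian} with this replacement bookkeeping in mind; no new analytic ingredient is required, and in particular the constants generated by Cauchy--Schwarz and the small-$\epsilon$ absorption remain uniform in $j$ because the cutoff hierarchy $\chi_0,\ldots,\chi_N$ is fixed once and for all.
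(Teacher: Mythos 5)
Your proposal is correct, and it is the same bootstrap idea as the paper's proof: localize every error term coming from a commutator with the cutoff onto a slightly larger cutoff that is $\equiv 1$ on the support of the derivatives of the smaller one, then iterate the resulting a priori estimate. The organization of the iteration differs, though, in a way worth noting. The paper uses a single auxiliary cutoff $\tau$ with $\tau\equiv 1$ near ${\rm supp}(\chi)$ and feeds the previously established decay of both $\|\tau u\|_X$ and $\|\chi u\|_X$ back into (\ref{cut off the result by tau}); because one of the surviving terms there is $\|u\|_X\cdot\|\chi u\|_X$ with $\|u\|_X=1$, each pass gains only half a power of $m$ (the exponents follow $a_{k+1}=1+a_k/2$). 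Your version instead fixes a chain of $N$ nested cutoffs and proves the clean one-step recursion $\|\chi_{j+1}u\|_X^2\leq C_\star m^{-2}\|\chi_j u\|_X^2$, which is possible precisely because you bound \emph{every} commutator contribution — including the zero-order ones like $[c_j\tau,L_j]u$ and $[T_0,\chi]u$, whose coefficients are supported in ${\rm supp}(d\chi_{j+1})\subset\{\chi_j=1\}$ — by $\|\chi_j u\|_X$ rather than by $\|u\|_X$. This gains a full power of $m$ per step and reaches $m^{-N}$ after exactly $N$ steps with uniform constants (finitely many fixed cutoffs), so your write-up is, if anything, the tighter one; the only point to keep in mind is that all $N$ cutoffs must remain inside the original $D_p$ on which $p_1+\langle\omega_0,T_1\rangle\neq 0$, which you ensure by construction.
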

To see this, we need to look back the estimate we did before, i.e. the one appeared in (\ref{a priori L2 estimate}), (\ref{Kohn's type estimate}), (\ref{The first term}) and (\ref{The second term}). First, take another cut-off function $\tau\in\mathscr{C}^\infty_0(D_p)$ with $\tau\equiv 1$ near ${\rm supp}(\chi)$,
observe that (\ref{a priori L2 estimate}) can be reformulated as
\begin{equation}
    \label{third a priori estimate}
    \begin{split}
       \|\chi u\|_X^2
    &\leq\frac{1}{C m^2}(\|F(\chi\tau u)\|_X^2+\|[F,\chi]\tau u\|)\\
    &\leq \frac{1}{C m^2}\left(\|L(\chi\tau u)\|_X^2+\|\overline{L}(\chi\tau u)\|_X^2+C_0\|\tau u\|_X^2\right).
    \end{split}
\end{equation}
Note that (\ref{T_0 term}) can be also viewed as
$$
\left|\left(e T_0 u|\chi u\right)\right|=\left|\left(e\tau u|m\chi u\right)\right|\leq\frac{C_0}{\epsilon}\|\tau u\|_X^2+\epsilon m^2\|\chi u\|_X^2.
$$
Similarly, we rewrite (\ref{L_j term}), (\ref{bar L_j term}) into
\begin{equation}
\begin{split}
            \left|\left(c_j L_j u|\chi u\right)\right|
            &=\left|\left(c_j\tau L_j u|\chi u\right)\right|\\
            &\leq\left|(c_j\tau u|L_j^*(\chi u))\right|+\left|([c_j\tau,L_j]u|\chi u)\right|\\
            &\leq\left(\frac{C_{12}}{\epsilon}\|\tau u\|_X^2+\epsilon\left(\|\overline{L}_j(\chi u)\|_X^2+C_{13}\|\chi u\|_X^2\right)\right)+{2C_{14}}\|u\|_X\cdot\|\chi u\|_X\\
            &=\epsilon\|\overline{L}_j(\chi u)\|_X^2+\frac{C_{12}}{\epsilon}\|\tau u\|_X^2+C_{13}\epsilon\|\chi u\|_X^2+2C_{14}\|u\|_X\cdot\|\chi u\|_X,
\end{split}
\end{equation}
and
\begin{equation}
  \left|\left(d_j \overline{L}_j u|\chi u\right)\right|\leq\epsilon\|{L}_j(\chi u)\|_X^2+\frac{C_{15}}{\epsilon}\|\tau u\|_X^2+C_{16}\epsilon\|\chi u\|_X^2+2C_{17}\|u\|_X\cdot\|\chi u\|_X,  
\end{equation}
respectively. Also, we take (\ref{The second term}) in the form of
\begin{equation}
    \begin{split}
        |(T_0(\chi u)|\chi u)|
        &\leq m\|\chi u\|_X^2+2C_{18}\|u\|_X\cdot\|\chi u\|_X.
    \end{split}
\end{equation}
Therefore, we have a slightly different upper bound 
\begin{equation*}
     C'\left(\left(m+(m^2+C_8)\epsilon\right)\|\chi u\|_X^2
     +\epsilon^{-1}\|\tau u\|_X^2
     +\epsilon\left(\|L_j (\chi u)\|_X^2+\|\overline{L}_j (\chi u)\|_X^2\right)
     +2C_{19}\|u\|_X\cdot\|\chi u\|_X\right).
\end{equation*}
for
$\|L(\chi u)\|_X^2+\|\overline{L}(\chi u)\|_X^2$. Take $\epsilon$ small enough and sum over $j$, then for all large enough $m$,
$$
\|L(\chi u)\|_X^2+\|\overline{L}(\chi u)\|_X^2\leq C''\left(\left(m+\epsilon m^2\right)\|\chi u\|_X^2
     +\epsilon^{-1}\|\tau u\|_X^2+2C_{19}\|u\|_X\cdot\|\chi u\|_X\right).
$$
From (\ref{third a priori estimate})
$$
Cm^2\|\chi u\|_X^2\leq\|L(\chi u)\|_X^2+\|\overline{L}(\chi u)\|_X^2+C_0\|\tau u\|_X^2,
$$
we can take sutibly small $\epsilon$ such that as $m$ large enough
\begin{equation}
\begin{split}
\label{cut off the result by tau}
        m^2\|\chi u\|_X^2
        &\leq C'''\left(\epsilon^{-1}\|\tau u\|_X^2+\|u\|_X\cdot\|\chi u\|_X+\|\tau u\|_X^2\right)\\
        &\leq C'''(\frac{1}{\epsilon C_{0,1}(\tau)m^2}+\frac{1}{C_{0,1}(\chi)m}+\frac{1}{C_{0,1}(\tau)m^2})\\
        &\leq \frac{1}{C_{0,\frac{3}{2}}m}.
\end{split}
\end{equation}
So $\|\chi u\|_X\leq\frac{1}{C_{0,\frac{3}{2}}m^{\frac{3}{2}}}$, and we can inductively apply (\ref{cut off the result by tau}) to get Proposition \ref{kernel is small enough near Y}.

Finally, for $p\in Y$, we take neighborhoods $O_p\Subset D_p$ of $p$ where $D_p\cap Y=\emptyset$, and pick a bump function $\chi\in\mathscr{C}^\infty_0(D_p)$ with $\chi\equiv 1$ on $O_p$. Denote $\|\cdot\|_k$ to be a torus invariant Sobolev $k$-norm induced by $(\cdot|\cdot)$. After applying the G\r{a}rding inequality to the $2k$-order strongly elliptic operator $(\Box^{(q)}_b-T_0^2)^k$, we have
$$
\|u\|_k^2\leq C_k'\left(\left((\Box^{(q)}_b-T_0^2)^k u\middle|u\right)+\|u\|_X^2\right)=O(m^{2k}).
$$
Similarly, with the help of elliptic estimate on $(\Box^{(q)}_b-T_0^2)^k$, for large enough $m$
\begin{equation}
    \begin{split}
\|\chi u\|_k^2
&\leq C_k'\left(\left((\Box^{(q)}_b-T_0^2)^k\chi u\middle|\chi u\right)+\|\chi u\|_X^2\right)\\
&\leq C_k'\left(\left(\chi(\Box_b^{(q)}-T_0^2)^k u\middle|\chi u\right)+\left(\left[(\Box_b^{(q)}-T_0^2)^k,\chi\right]u\middle|\chi u\right)+\|\chi u\|_X^2\right)\\
&\leq C_k''\left((\lambda+m^2)^k\|\chi u\|_X^2+\|u\|_{2k-1}\|\chi u\|_X+\|\chi u\|_X^2\right).
    \end{split}
\end{equation}
By Proposition \ref{kernel is small enough near Y}, for any $N\in\mathbb{N}_0$, there is a constant $C_{k,N}>0$ independent of $m$ and $u$ such that
\begin{equation}
    \label{sobolev k norm is very small}
    \|\chi u\|_k\leq C_{k,N}m^{-N},
\end{equation}
for all $m$ large enough. Combine (\ref{sobolev k norm is very small}) with Sobolev inequality, for all $x\in O_p$, $p\notin Y$, $k$ large enough, then for any $N\in\mathbb{N}$, there is a constant $C_N>0$ independent of $m$ and $u$ such that
\begin{equation}
\label{C0 estimate}
    |u(x)|_h^2=|\chi(x)u(x)|_h^2\leq C_k'''\|\chi u\|_k^2\leq C_N m^{-N},
\end{equation}
for all $m$ large enough. Now, consider a cut-off function $\tau\in\mathscr{C}^\infty_0(D_p)$, $\tau\equiv 1$ on ${\rm supp}(\chi)$. For any differential operator $P:\Omega^{(0,q)}_0(D_p)\to\Omega^{(0,q)}(D_p)$ of order $l$, where $\Omega^{(0,q)}_0(D_p):=\mathscr{C}^\infty_0(D_p,T^{*0,q}X)$. Note that
$$
Pu(x)=\chi(x)Pu(x)=P(\chi u)(x)+[\chi, P](\tau u)(x).
$$
Thus, similar in (\ref{C0 estimate}), for every $x\in O_p$, $p\notin Y$, $k$ large enough and any $N\in\mathbb{N}$, there is a constant $C_N>0$ independent of $m$ and $u$ such that
\begin{equation}
\label{Ck estimate}
    |Pu(x)|_h^2\leq C_k'''\left(\|P(\chi u)\|_k^2+\|[\chi,P](\tau u)\|_k^2\right)\leq C_k''''\left(\|\chi u\|_{k+l}^2+\|\tau u\|_{k+l-1}\right)\leq C_N m^{-N}
\end{equation}
for all $m$ large enough. From (\ref{C0 estimate}) and (\ref{Ck estimate}) and (\ref{low energy reproducing kernel in x and y}), Theorem \ref{Main theorem 2} holds. 
\subsection{The full asymptotic expansion near $Y$}
In this section, we prove Theorem \ref{Main theorem 3}. We first calculate the cirle equivariant Szeg\H{o} kernel. From now on, we fix a point $p\in Y$ and take a BRT patch $D$ near $p$ as in Proposition \ref{BRT patch} . Let 
$$
\Omega^{(0,q)}_m(X):=\left\{u\in\Omega^{(0,q)}(X):-iT_0 u:=mu\right\}
$$
and $L^2_{(0,q),m}(X)$ be the completion of $\Omega^{(0,q)}_m(X)$ with respect to $(\cdot|\cdot)$. With respect to $(\cdot|\cdot)$, denote $Q^{(q)}_m$ to be the orthogonal projection
$$
Q_m^{(q)}:L^2_{(0,q)}(X)\to L^2_{(0,q),m}(X).
$$
Extend $\Box_b^{(q)}$ by Gaffney extension (\ref{Gaffney extension}), then $\Box_b^{(q)}$ is a self-adjoint operator. We can hence apply generel theory for self-adjoint operator such as \cite{Davies1995} to  take the spectral projector
$$
\Pi_{\leq\lambda}^{(q)}:L^2_{(0,q)}(X)\to \mathcal{H}^{q}_{b,\leq\lambda}(X):=E((-\infty,\lambda]),
$$
for any $\lambda>0$, where $E((-\infty,\lambda])$ is the spectral projection and $E$ is the spectral measure for $\Box_b^{(q)}$, respectively. Denote the $m$-th Fourier component of the spectral projector by
$$
\Pi_{\leq\lambda,m}^{(q)}:L^2_{(0,q)}(X)\to \mathcal{H}^q_{b,m,\leq\lambda}(X):=\mathcal{H}^q_{b,\leq\lambda}(X)\cap L^2_{(0,q),m}(X).
$$
We can check that on $\Omega^{(0,q)}(X)$,
$$
\Pi^{(q)}_{\leq\lambda}=Q_m^{(q)}\Pi^{(q)}_{\leq\lambda}=\Pi^{(q)}_{\leq\lambda}Q_m^{(q)}.
$$
For a given point $p\in Y$, let $x=(x_1,\cdots,x_{2n},x_{2n+1})=(\mathring{x},x_{2n+1})$, $y=(y_1,\cdots,y_{2n},y_{2n+1})=(\mathring{y},y_{2n+1})$ be BRT trivialization as in Theorem \ref{BRT patch} defined on $D:=\tilde{D}\times(-\pi,\pi)\subset X$ near $p$, where $\tilde{D}$ is an open set of $\mathbb{C}^n$. Note that by theory of Fourier series on the circle, $Q_m^{(q)}u(x)=\frac{1}{2\pi}\int_{-\pi}^{\pi}e^{-im\theta}u(e^{i\theta}\circ x)d\theta$ for any $u\in\Omega^{(0,q)}(X)$. In particular, under BRT coordinates, for all $u\in\Omega^{(0,q)}_0(D):=\mathscr{C}^\infty_0(D,T^{*0,q}X)$,
\begin{equation*}
\begin{split}
        \Pi^{(q)}_{\leq\lambda,m}u(x)
        &=Q_m^{(q)}\Pi^{(q)}_{\leq\lambda}u(x)\\
    &=\frac{1}{2\pi}\int_{-\pi}^{\pi}e^{-im\theta}\left(\int_D \Pi^{(q)}_{\leq\lambda}(e^{i\theta}\circ x,y)u(y)dV_X(y)\right)d\theta\\
    &=\int_D\left(\frac{1}{2\pi}\int\Pi^{(q)}_{\leq\lambda}(e^{i\theta}\circ x,y)e^{-im\theta}d\theta\right)u(y)dV_X(y).
\end{split}
\end{equation*}
In other words,
\begin{equation}
    \label{original circle equivariant Szego kernel}
    \Pi^{(q)}_{\leq\lambda,m}(x,y)=\frac{1}{2\pi}\int_{-\pi}^{\pi}\Pi^{(q)}_{\leq\lambda}(e^{i\theta}\circ x,y)e^{-im\theta}d\theta.
\end{equation}
Similarly, for a fixed $(p_1,\cdots,p_d)\in\mathbb{Z}^d$ , we can find that  $\Pi^{(0)}_{m,mp_1\,\cdots,mp_d}u(x)$ is
\[
(2\pi)^{-d}\int_X\left(\int_{T^d}\Pi^{(0)}_{\leq\lambda,m}\left((e^{i\theta_1},\cdots,e^{i\theta_d})\circ x,y\right)e^{-im\sum_{j=1}^d p_j\theta_j}d\theta_1,\cdots d\theta_d \right)u(y)dV_X(y),
\]
for all $u\in\Omega^{(0,q)}(X)$.Therefore, 
\begin{equation}
\label{definition of torus equiv kernel}
\begin{split}
        \Pi_{\leq \lambda,m,mp_1,\cdots,mp_d}^{(0)}(x,y)
        &=\frac{1}{(2\pi)^d}\int_{T^d}\Pi^{(0)}_{\leq\lambda,m}\left((e^{i\theta_1},\cdots,e^{i\theta_d})\circ x,y\right)e^{-im\sum_{j=1}^d p_j\theta_j}d\theta_1,\cdots d\theta_d.
\end{split}
\end{equation}
Since we assume the Levi form is positive on $Y$, we can apply the result in \cite{HsiaoMarinsecu2017} for the case of constant signature $(n_-,n_+)=(0,n)$ near $Y$. On one hand, from \cite{HsiaoMarinsecu2017}*{Theorem 4.1}, we have:
\begin{proposition}
\label{HsiaoMawinescu for q not n_-}
For each $q=1,\cdots,n-1$, $\Pi^{(q)}_{\leq\lambda}$ is a smoothing operator near $Y$.     
\end{proposition}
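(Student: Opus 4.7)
The plan is to reduce the proposition to the local structure theorem of Hsiao--Marinescu \cite{HsiaoMarinsecu2017}*{Theorem 4.1}. First I invoke Assumption \ref{assumption 3} to fix a relatively compact open neighborhood $U$ of $Y$ on which the induced Levi form $\mathcal{L}$ is strictly positive definite, so that on $U$ the CR structure has constant signature $(n_-,n_+)=(0,n)$. Since ``smoothing near $Y$'' is a purely local statement about the Schwartz kernel $\Pi^{(q)}_{\leq\lambda}(x,y)$, it suffices to show that this kernel is smooth on $V\times V$ for every open $V\Subset U$, and this is exactly what the cited theorem delivers in the constant signature case.

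Conceptually, the reason the result holds is Kohn's subelliptic estimate in positive Levi form: for $q\notin\{n_-,n_+\}=\{0,n\}$, i.e., precisely for $q\in\{1,\ldots,n-1\}$, one has a $\frac{1}{2}$-derivative gain $\|u\|_{\frac{1}{2}}^{2}\lesssim(\Box^{(q)}_{b}u|u)+\|u\|_{X}^{2}$ for test forms supported in $U$. Iterating this estimate via commutators with the elliptic operator $\Box^{(q)}_{b}-T_{0}^{2}$, in the same spirit as the bootstrap that led to (\ref{sobolev k norm is very small}), one controls all Sobolev norms of any element of $\mathcal{H}^{q}_{b,\leq\lambda}(X)$ uniformly in the choice of orthonormal basis. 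Plugging this into the local representation (\ref{low energy reproducing kernel in x and y}) and invoking Sobolev embedding then gives the smoothness of $\Pi^{(q)}_{\leq\lambda}(x,y)$ on $V\times V$.

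The main obstacle, were one to reprove the statement from scratch rather than cite it, would be to upgrade this soft a priori argument into an explicit microlocal parametrix: one must construct a local parametrix of $\Box^{(q)}_{b}$ near $Y$ inside the Melin--Sj\"ostrand class of complex phase Fourier integral operators recalled in Section 2.3, and read off the wavefront set of $\Pi^{(q)}_{\leq\lambda}$ from the characteristic variety $\Sigma$ of $\Box^{(q)}_{b}$. In the constant signature regime with $q\notin\{n_-,n_+\}$ no point of $\Sigma$ contributes, which is exactly the smoothing property. Since we do not need anything sharper than smoothness of the kernel in this paragraph, I would simply invoke \cite{HsiaoMarinsecu2017}*{Theorem 4.1} at this point and proceed; the much more delicate case $q=0=n_-$, where the cited theorem instead produces a complex phase FIO representation, is what will drive the full asymptotic expansion in the remainder of Section 3.
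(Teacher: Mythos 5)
Your proposal is correct and takes essentially the same route as the paper: the paper also obtains this proposition by observing that Assumption \ref{assumption 3} gives constant signature $(n_-,n_+)=(0,n)$ near $Y$ and then directly invoking \cite{HsiaoMarinsecu2017}*{Theorem 4.1}, which yields the smoothing property for $q\notin\{0,n\}$. Your subelliptic-estimate discussion is a correct heuristic aside but is not part of the argument either here or in the paper.
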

Form Proposition \ref{HsiaoMawinescu for q not n_-}, using integration by parts with respect to $\theta$ in (\ref{original circle equivariant Szego kernel}), beacuse the boundary term vanishes for periodic reason, we can show that on $\Omega\times\Omega$,
$$
\Pi^{(q)}_{\leq\lambda,m}(x,y)=O(m^{-\infty})~\text{for all}~q=1,\cdots,n-1,
$$
where $\Omega$ is an open set containing $Y$. In particular, from (\ref{definition of torus equiv kernel}) and Theorem \ref{Main theorem 2}, we have:
\begin{proposition}
\label{near Y, q from 1 to n-1}
For each $q=1,\cdots,n-1$, $\Pi^{(q)}_{\leq\lambda,m,mp_1,\cdots,mp_d}(x,y)=O(m^{-\infty})$ on $X\times X$.
\end{proposition}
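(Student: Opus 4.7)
The plan is to cover $X\times X$ by the two open sets $\Omega\times\Omega$ and $X\times X\setminus\overline{\Omega'\times\Omega'}$, where $\Omega$ is an open $T^d$-invariant neighborhood of $Y$ and $\Omega'\Subset\Omega$ is a smaller such neighborhood. A $T^d$-invariant $\Omega$ exists because $Y=\mu^{-1}(-p_1,\ldots,-p_d)$ is $T^d$-invariant (the moment map $\mu$ being $T^d$-invariant); for instance, one takes the $\mu$-preimage of a small open ball centered at $(-p_1,\ldots,-p_d)$. On $\Omega\times\Omega$ I will use the $T^d$-averaging formula, and on the complement I will use a reproducing-kernel Cauchy--Schwarz argument combined with the pointwise estimates developed in the proof of Theorem~\ref{Main theorem 2}.

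For the region $\Omega\times\Omega$, I invoke formula (\ref{definition of torus equiv kernel}),
$$\Pi^{(q)}_{\leq\lambda,m,mp_1,\cdots,mp_d}(x,y)=\frac{1}{(2\pi)^d}\int_{T^d}\Pi^{(q)}_{\leq\lambda,m}\!\left(e^{i\theta}\cdot x,\,y\right)e^{-im\langle p,\theta\rangle}d\theta,$$
together with the bound $\Pi^{(q)}_{\leq\lambda,m}(x,y)=O(m^{-\infty})$ on $\Omega\times\Omega$ established in the discussion immediately preceding the proposition (via integration by parts in $\theta$ in (\ref{original circle equivariant Szego kernel}) and the smoothness of $\Pi^{(q)}_{\leq\lambda}$ near $Y$ from Proposition~\ref{HsiaoMawinescu for q not n_-}). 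Since $\Omega$ is $T^d$-invariant, the point $e^{i\theta}\cdot x$ remains in $\Omega$ for every $\theta\in T^d$ when $x\in\Omega$, so the integrand is uniformly $O(m^{-\infty})$ in $\theta$, and the $\theta$-integral inherits this decay.

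For the complement $X\times X\setminus\overline{\Omega'\times\Omega'}$, every point $(x,y)$ has $x\notin\overline{\Omega'}$ or $y\notin\overline{\Omega'}$. I use the reproducing-kernel expression (\ref{low energy reproducing kernel in x and y}) and Cauchy--Schwarz:
$$\left|\partial_x^\alpha\partial_y^\beta\Pi^{(q)}_{\leq\lambda,m,mp_1,\cdots,mp_d,I,J}(x,y)\right|\leq\left(\sum_j\left|\partial_x^\alpha f^q_{\leq\lambda,j,I}(x)\right|^2\right)^{1/2}\left(\sum_j\left|\partial_y^\beta f^q_{\leq\lambda,j,J}(y)\right|^2\right)^{1/2}.$$
The proof of Theorem~\ref{Main theorem 2} shows, via the Sobolev estimates (\ref{sobolev k norm is very small}) and (\ref{Ck estimate}), that $|Pu(z)|_h=O(m^{-\infty})$ for any $z\notin\overline{\Omega'}$, any differential operator $P$, and any unit $u\in\mathcal{H}^q_{b,\leq\lambda,m,mp_1,\cdots,mp_d}(X)$; this makes the factor corresponding to the variable outside $\overline{\Omega'}$ equal to $O(m^{-\infty})$. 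The other factor is controlled polynomially by G\r{a}rding inequality for the elliptic operator $\Box^{(q)}_b-T_0^2$ (as used in the proof of Theorem~\ref{Main theorem 2}), which yields $\|u\|_k=O(m^k)$ and hence, by Sobolev embedding, $|Pu(z)|_h=O(m^{N})$ for some $N=N(k,P)$ uniformly on $X$ for unit $u$. Their product is therefore $O(m^{-\infty})$.

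The principal obstacle is the ``cross'' case in this second region, where $x$ and $y$ lie on opposite sides of $\overline{\Omega'}$ and no direct one-variable spectral bound applies; the reproducing-kernel Cauchy--Schwarz inequality resolves it by trading the rapid pointwise decay in the ``away'' variable against mere polynomial growth in the other, and the polynomial factor is absorbed into the $O(m^{-\infty})$. The remaining bookkeeping (uniformity of the estimates in $(\alpha,\beta)$ and on compact subsets of each open set, and the fact that the two regimes cover $X\times X$) is routine, and combined they yield the asserted $O(m^{-\infty})$ bound on all of $X\times X$ for $q=1,\ldots,n-1$.
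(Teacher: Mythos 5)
Your proof is correct and follows essentially the same route as the paper: near $Y$, the torus-averaging formula (\ref{definition of torus equiv kernel}) combined with the $O(m^{-\infty})$ bound for $\Pi^{(q)}_{\leq\lambda,m}$ obtained from Proposition \ref{HsiaoMawinescu for q not n_-} and integration by parts in $\theta$; away from $Y$, the pointwise estimates behind Theorem \ref{Main theorem 2}. Your write-up is in fact slightly more complete than the paper's one-line deduction, since you explicitly treat the mixed region where one variable lies near $Y$ and the other does not, via Cauchy--Schwarz on the reproducing-kernel expression (\ref{low energy reproducing kernel in x and y}) with one rapidly decaying and one polynomially bounded factor --- a case the paper leaves implicit --- and you also note the need for an invariant neighborhood $\Omega$ so that the averaged argument stays in the region where the kernel is smoothing.
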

On the other hand, from the statement and the proof of \cite{HsiaoMarinsecu2017}*{Theorem 4.1}, we know:
\begin{proposition}
\label{HsiaoMarinescu for q is n_-}
For $q=\{0,n\}$, locally on a coordinates patch $D\subset X$, $\Pi^{(q)}_{\leq\lambda}$  is in the form of complex Fourier integral operator.  Precisely, in the sense of oscillatory integral
 $$
 \Pi_{\leq\lambda}^{(0)}(x,y)=\int_0^\infty e^{i\phi_-(x,y)t}a_-(x,y,t)dt.
 $$
 Moreover, for any small open neighborhood $\Omega$ containing $Y$ and all $\chi,\tau\in\mathscr{C}^\infty_0(\Omega)$ such that ${\rm supp}(\chi)\cap{\rm supp}(\tau)=\emptyset$, then $\chi\Pi^{(0)}_{\leq\lambda}\tau$ is a smoothing operator. Here the phase function locally on  $D$ is
 $$
 \phi_-(x,y)=x_{2n+1}-y_{2n+1}+\Phi(\mathring{x},\mathring{y}),
 $$
 where $\mathring{x}:=(x_1,\cdots,x_{2n})$, $\mathring{y}:=(y_1,\cdots,y_{2n})$ and $\Phi(\mathring{x},\mathring{y})$ is a complex-valued function satisfying
 for some constant $C>0$, ${\rm Im}\Phi(\mathring{x},\mathring{y})\geq C|\mathring{x}-\mathring{y}|^2$, $\Phi(\mathring{x},\mathring{y})=-\overline{\Phi(\mathring{y},\mathring{x})}$,    and $\Phi(\mathring{x},\mathring{y})=0~\text{if and only if}~\mathring{x}=\mathring{y}$, for all $(x,y)\in D\times D$.
And the symbol here satisfies 
 $$
 a_-(x,y,t)\in S^n_{\rm cl}(D\times D\times\mathbb{R}_+),~a_-(x,y,t)\sim\sum_{j=0}^\infty t^{n-j}(a_-)_j(x,y)~\text{in}~S^n_{\rm cl}(D\times D\times\mathbb{R}_+),
 $$
 where $(a_-)_j(x,y)\in\mathscr{C}^\infty(D\times D)$, $j=0,1,2,\cdots$, and $(a_-)_0(x,x)=\frac{1}{2\pi^{n+1}}|\det\mathcal{L}_x|$.
 
 Similarly,
  $$
 \Pi_{\leq\lambda}^{(n)}(x,y)=\int_0^\infty e^{i\phi_+(x,y)t}a_+(x,y,t)dt,
 $$
where the phase on $D$ is
 $$
 \phi_+(x,y)=-\overline{\phi}_-(x,y)=-x_{2n+1}+y_{2n+1}-\overline{\Phi}(\mathring{x},\mathring{y})
 $$
and the symbol 
 $$
 a_+(x,y,t)\in S^n_{\rm cl}(D\times D\times\mathbb{R}_+, T^{*0,n}X\boxtimes T^{*0,n}X).
 $$
 Also, for any small open neighborhood $\Omega$ containing $Y$ and all $\chi,\tau\in\mathscr{C}^\infty_0(\Omega)$ such that ${\rm supp}(\chi)\cap{\rm supp}(\tau)=\emptyset$, then $\chi\Pi^{(n)}_{\leq\lambda}\tau$ is a smoothing operator.
\end{proposition}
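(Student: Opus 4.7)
My plan is to fix $p \in Y$, work in a BRT chart $D_p$ centered at $p$ (which exists with $\delta = \pi$ by Assumption \ref{assumption 2}), and apply two successive complex stationary phase reductions. The cases $q = 1,\ldots,n-1$ follow directly from Proposition \ref{near Y, q from 1 to n-1}. For $q = n$, inserting the FIO representation $\Pi^{(n)}_{\leq\lambda}(x,y) = \int_0^\infty e^{it\phi_+}a_+\,dt$ into the circle average $(\ref{original circle equivariant Szego kernel})$ produces the combined phase $t\phi_+((e^{i\theta})\circ x,y) - m\theta = t(-x_{2n+1}-\theta+y_{2n+1}-\overline\Phi)-m\theta$, whose $\theta$-derivative $-t-m$ is bounded away from zero for $t > 0$, $m \geq 1$; iterated integration by parts in $\theta$ gives $\Pi^{(n)}_{\leq\lambda, m} = O(m^{-\infty})$ on $D_p \times D_p$, which the $T^d$-average $(\ref{definition of torus equiv kernel})$ preserves.

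For $q = 0$, Step 1 substitutes $\Pi^{(0)}_{\leq\lambda}(x,y) = \int_0^\infty e^{it\phi_-}a_-\,dt$ into $(\ref{original circle equivariant Szego kernel})$ and rescales $t = ms$, using the BRT identification $(e^{i\theta})\circ(\mathring x, x_{2n+1}) = (\mathring x, x_{2n+1}+\theta)$, to produce a $2$-dimensional oscillatory integral in $(s,\theta)$ with phase $\Psi = s(x_{2n+1}+\theta-y_{2n+1}+\Phi(\mathring x,\mathring y)) - \theta$. Its unique critical point at $(s,\theta) = (1,\,y_{2n+1}-x_{2n+1}-\Phi(\mathring x,\mathring y))$ has antidiagonal nondegenerate Hessian and critical value $\phi_-(x,y)$; contributions from $\theta$-regions where $(e^{i\theta})\circ x$ stays away from $y$ are absorbed as $O(m^{-\infty})$ by the off-diagonal smoothing part of Proposition \ref{HsiaoMarinescu for q is n_-}. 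Melin--Sj\"ostrand (Theorem \ref{MS formula}) then yields
\[
\Pi^{(0)}_{\leq\lambda, m}(x,y) \equiv e^{im\phi_-(x,y)}\hat b(x,y,m)~{\rm mod}~O(m^{-\infty})
\]
with $\hat b \in S^n_{\rm loc}(1; D_p\times D_p)$ (the power $n$ coming from $m\cdot m^n\cdot m^{-1}$: one $m$ from $dt = m\,ds$, $m^n$ from the classical symbol $a_-$ of order $n$, and $m^{-1}$ from the $2$-dimensional stationary phase) and leading coefficient $\hat b_0(x,x) > 0$ on $Y \cap D_p$ inherited from $(a_-)_0(x,x) = (2\pi^{n+1})^{-1}|\det\mathcal{L}_x|$.

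Step 2 inserts this into $(\ref{definition of torus equiv kernel})$, producing modulo $O(m^{-\infty})$ a $T^d$-oscillatory integral with phase $F(\theta, x, y) := \phi_-((e^{i\theta})\circ x, y) - \sum_{j=1}^d p_j\theta_j$. At $\theta = 0$, $y = x$, one computes $\partial_{\theta_j}F = \langle d_x\phi_-(x,x), T_j(x)\rangle - p_j = -\langle\omega_0(x), T_j(x)\rangle - p_j = -\mu_j(x) - p_j$, which vanishes precisely on $Y$; by Proposition \ref{complex critical point} the critical point extends to an almost analytic function $\Theta(x,y)$ with $\Theta(x,x) = 0$ on $Y \cap D_p$. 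The crucial nondegeneracy of the complex Hessian $H_{jk} := \partial^2_{\theta_j\theta_k}F|_{\theta = 0, y = x \in Y}$ follows from the estimate ${\rm Im}\,\phi_-(x',x) \geq C|\mathring x' - \mathring x|^2$ in Proposition \ref{HsiaoMarinescu for q is n_-}, which yields
\[
{\rm Im}\,\bigl(Hv, v\bigr) \;\geq\; 2C\,\Bigl|\sum_{j=1}^d v_j\, \mathring{T_j}(x)\Bigr|^2;
\]
if the right side vanishes for some real $v \neq 0$, then $\sum_j v_j T_j(x)$ is a multiple of $T_0(x) = \partial_{x_{2n+1}}$, contradicting the freeness of the $S^1 \times T^d$ action (Assumption \ref{assumption 2}). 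Hence ${\rm Im}\,H$ is positive definite and $H$ is invertible as a complex symmetric matrix. Applying Theorem \ref{MS formula} produces
\[
\Pi^{(0)}_{\leq\lambda, m, mp_1,\ldots,mp_d}(x,y) \equiv e^{imf(x,y)}b(x,y,m)~{\rm mod}~O(m^{-\infty})
\]
on $D_p \times D_p$ with $f(x,y) := F(\Theta(x,y), x, y)$ and $b \in S^{n-d/2}_{\rm loc}(1; D_p \times D_p)$ (the extra $m^{-d/2}$ coming from $d$-dimensional real stationary phase). The required identities $f(x,x) = 0$, $d_x f(x,x) = -\omega_0(x)$, $d_y f(x,x) = \omega_0(x)$ on $Y \cap D_p$ follow from $\Theta(x,x) = 0$ on $Y$ combined with the infinite-order replacement in Proposition \ref{complex critical point}; positivity $b_0(x,x) > 0$ on $Y \cap D_p$ is read off from the Melin--Sj\"ostrand leading factor $\bigl(\det(mH/(2\pi i))\bigr)^{-1/2}\hat b_0(x,x)$ combined with the prescribed branch choice.

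The main obstacle is the nondegeneracy argument in Step 2, which essentially uses Assumption \ref{assumption 2} (freeness) together with the Levi-positivity estimate encoded in Proposition \ref{HsiaoMarinescu for q is n_-} (Assumption \ref{assumption 3}); Assumption \ref{assumption 1} enters only to guarantee that $Y$ is a smooth submanifold on which the whole construction is valid. Secondary technical points are the boundary and small-$t$ handling of the oscillatory integrals in Step 1, done via the off-diagonal smoothing statement in Proposition \ref{HsiaoMarinescu for q is n_-} and the standard classical-symbol regularization of $\int_0^\infty e^{it\phi_-}a_-\,dt$.
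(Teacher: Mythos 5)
Your proposal does not actually prove the statement at hand. Proposition \ref{HsiaoMarinescu for q is n_-} concerns the spectral projector $\Pi^{(q)}_{\leq\lambda}$ itself --- no circle or torus Fourier decomposition, no parameter $m$ --- and asserts its local structure as a complex Fourier integral operator: the Boutet de Monvel--Sj\"ostrand type phase $\phi_\pm(x,y)=\pm(x_{2n+1}-y_{2n+1})\pm\Phi$ with ${\rm Im}\,\Phi\gtrsim|\mathring{x}-\mathring{y}|^2$, a classical symbol $a_\pm\in S^n_{\rm cl}$ with leading term $(a_-)_0(x,x)=\frac{1}{2\pi^{n+1}}|\det\mathcal{L}_x|$, and the off-diagonal smoothing property $\chi\Pi^{(q)}_{\leq\lambda}\tau$ smoothing when ${\rm supp}(\chi)\cap{\rm supp}(\tau)=\emptyset$. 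What you outline instead is the downstream computation of $\Pi^{(0)}_{\leq\lambda,m}$ and then $\Pi^{(0)}_{\leq\lambda,m,mp_1,\cdots,mp_d}$ by two successive Melin--Sj\"ostrand reductions, i.e.\ essentially the paper's proof of Theorem \ref{Main theorem 3} (and of Propositions \ref{q=n circle equiv kernel} and \ref{asymptotic expansion of circle equiv Szego kernel}). Worse, the argument is circular with respect to the statement you were asked to prove: you explicitly invoke ``the off-diagonal smoothing part of Proposition \ref{HsiaoMarinescu for q is n_-}'' to discard the non-stationary $\theta$-regions, and ``the Levi-positivity estimate encoded in Proposition \ref{HsiaoMarinescu for q is n_-}'' for the Hessian nondegeneracy, so the FIO description whose proof is required is taken as an input throughout.

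For the record, the paper does not prove this proposition either: it is quoted directly from the statement and proof of Hsiao--Marinescu (\cite{HsiaoMarinsecu2017}*{Theorem 4.1}), which applies near $Y$ because Assumption \ref{assumption 3} makes the Levi form positive there, i.e.\ of constant signature $(n_-,n_+)=(0,n)$, singling out the degrees $q\in\{0,n\}$. A self-contained proof would have to reproduce that microlocal construction --- a parametrix/spectral localization for the non-elliptic $\Box^{(q)}_b$ with complex-phase Fourier integral operators, yielding the phase $\phi_\pm$, the symbol order $n$, and the leading coefficient $\frac{1}{2\pi^{n+1}}|\det\mathcal{L}_x|$ --- none of which can be extracted from stationary-phase manipulations of the equivariant kernels, since those manipulations presuppose exactly this description. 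So the correct course is either to cite the reference, as the paper does, or to supply that construction; your Steps 1--2, while largely consistent with the paper's Section 3.2 and hence not wasted, belong to the proof of Theorem \ref{Main theorem 3}, not of this proposition.
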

To calculate (\ref{original circle equivariant Szego kernel}) via Proposition \ref{HsiaoMarinescu for q is n_-}, we need to consider the integral under the BRT coordinates patch $D:=\tilde{D}\times (-\pi,\pi)$ in Theorem \ref{BRT patch}. First, note that under BRT coordinates, for $x\in D$ and $e^{i\theta}\circ x\in D$, we have $e^{i\theta}\circ x=(x_1,\cdots,x_{2n},x_{2n+1}+\theta)$. Second, to make sure all the calculation are under BRT patch, for $(x,y)\in D\times D$, we have to consider a smaller open set $D_p\subset D$ such that $\overline{D}_p\subset D$, and cut-off functions $\chi_0,\chi_1\in\mathscr{C}^\infty_0(D)$, where $\chi_0\equiv 1$ on $D_p$ and $\chi_1\equiv 1$ on ${\rm supp}(\chi)$. Notice that
$$
\Pi^{(0)}_{\leq\lambda,m}(x,y)=e^{imx_{2n+1}}\Pi^{(0)}_{\leq\lambda,m}(\hat{x},y),
$$
where $\hat{x}:=(x_1,\cdots,x_{2n},0)$. This holds because from (\ref{low energy reproducing kernel in x and y}),
$\Pi^{(0)}_{\leq\lambda,m}(x,y)=\sum_{j=1}^{N_0}f^0_{\leq\lambda,j}(x)\overline{f_{\leq\lambda,j}^0}(y)$ and $f^0_{\leq\lambda,j}(x)=e^{im x_{2n+1}}f^0_{\leq\lambda,j}(\hat{x})$
by $T_0=\frac{\partial}{\partial x_{2n+1}}$, where $\{f^0_{\leq\lambda,j}\}_{j=1}^{N_0}$ is an orthonormal basis for $\mathcal{H}^0_{b,\leq\lambda,m}(X)$. Now, for $(x,y)\in D_p\times D_p$, we write
\begin{equation}
\label{I_1+I_2}
    \begin{split}
         \Pi_{\leq\lambda,m}^{(0)}(x,y)
        &=e^{imx_{2n+1}}\Pi^{(0)}_{\leq\lambda,m}(\hat{x},y)\\
        &=\frac{e^{imx_{2n+1}}}{2\pi}\int_{-\pi}^{\pi}\Pi^{(0)}_{\leq\lambda}(e^{i\theta}\circ\hat{x},y)e^{-im\theta}d\theta\\
        &=\frac{e^{imx_{2n+1}}}{2\pi}\int_{-\pi}^{\pi}\Pi^{(0)}_{\leq\lambda}(e^{i\theta}\circ\hat{x},y)\chi_0(y)e^{-im\theta}d\theta\\
        &=I_1+I_2
    \end{split}
\end{equation}
where
\begin{equation}
    \label{I_1}
    I_1:=\frac{e^{imx_{2n+1}}}{2\pi}\int_{-\pi}^{\pi}\chi_1(e^{i\theta}\circ \hat{x})\Pi^{(0)}_{\leq\lambda}(e^{i\theta}\circ\hat{x},y)\chi_0(y)e^{-im\theta}d\theta
\end{equation}
and
\begin{equation}
\label{I_2}
    \begin{split}
    I_2
    &:=\frac{e^{imx_{2n+1}}}{2\pi}\int_{-\pi}^{\pi}(1-\chi_1)(e^{i\theta}\circ \hat{x},y)\Pi^{(0)}_{\leq\lambda}(e^{i\theta}\circ\hat{x},y)\chi_0(y)e^{-im\theta}d\theta\\
    &=\frac{e^{imx_{2n+1}}}{2\pi}\int_{-\pi}^{\pi}\left((1-\chi_1)\Pi^{(0)}_{\leq\lambda}\chi_0\right)(e^{i\theta}\circ \hat{x},y)e^{-im\theta}d\theta.
\end{split}
\end{equation}
In (\ref{I_2}), since $(1-\chi_1)\Pi^{(0)}_{\leq\lambda}\chi_0$ is a smoothing operator in view of Proposition \ref{HsiaoMarinescu for q is n_-}, we can apply integration by parts with respect to $\theta$. Because the boundary term vanishes for periodic reason, we can find that $I_2=O(m^{-\infty})$. As for $(\ref{I_1})$, we shall write
\begin{equation}
\label{circel equivariant Szego kernel on lower energy function}
    \begin{split}
        I_1
        &=\frac{e^{imx_{2n+1}}}{2\pi}\int_{-\pi}^{\pi}\chi_1(e^{i\theta}\circ \hat{x},y)\Pi^{(0)}_{\leq\lambda}(e^{i\theta}\circ\hat{x})\chi_0(y)e^{-im\theta}d\theta\\
        &=\frac{e^{imx_{2n+1}}}{2\pi}\int_{-\pi}^{\pi}\int_0^\infty e^{i\phi_-(e^{i\theta}\circ \hat{x},y)t}\chi_1(e^{i\theta}\circ \hat{x})a_-(e^{i\theta}\circ \hat{x},y,t)\chi_0(y)e^{-im\theta}dtd\theta\\
                &=\frac{e^{imx_{2n+1}}}{2\pi}\int_{-\pi}^{\pi}\int_0^\infty e^{i\phi_-((\mathring{x},\theta),y)t-im\theta}\chi_1(\mathring{x},\theta)a_-\big((\mathring{x},\theta),y,t\big)\chi_0(y)dt d\theta\\
  &=\frac{m e^{imx_{2n+1}}}{2\pi}\int_{-\pi}^{\pi}\int_0^\infty e^{im\psi_-(x,y,t,\theta)}\chi_1(\mathring{x},\theta)a_-\big((\mathring{x},\theta),y,mt\big)\chi_0(y)dtd\theta
    \end{split}
\end{equation}
where 
$$
\psi_-(x,y,t,\theta):=\bigg(\theta-y_{2n+1}+\Phi(\mathring{x},\mathring{y})\bigg)t-\theta.
$$
Similarly, on $D_P\times D_p$, we write $\Pi_{\leq\lambda,m}^{(n)}(x,y)=I_3+I_4$, where
\begin{equation}
    I_3:=\frac{m e^{imx_{2n+1}}}{2\pi}\int_{-\pi}^{\pi}\int_0^\infty e^{im\psi_+(x,y,t,\theta)}\chi_1(\mathring{x},\theta)a_+\big((\mathring{x},\theta),y,mt\big)\chi_0(y)dtd\theta,
\end{equation}
and
\begin{equation}
    I_4:=\frac{e^{imx_{2n+1}}}{2\pi}\int_{-\pi}^{\pi}\left((1-\chi_1)\Pi^{(n)}_{\leq\lambda}\chi_0\right)(e^{i\theta}\circ \hat{x},y)e^{-im\theta}d\theta=O(m^{-\infty}).
\end{equation}
Here,
$$
\psi_+(x,y,t,\theta):=-\bigg(\theta-y_{2n+1}+\overline{\Phi}(\mathring{x},\mathring{y})\bigg)t-\theta.
$$
We first handle the case for $q=n$:
\begin{proposition}
\label{q=n circle equiv kernel}
$\Pi^{(n)}_{\leq\lambda,m}(x,y)=O(m^{-\infty})~\text{on}~D_p\times D_p$.
\end{proposition}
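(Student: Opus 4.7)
The plan is to show that $I_3 = O(m^{-\infty})$ by a straightforward non-stationary phase argument in $\theta$; combined with $I_4 = O(m^{-\infty})$, already established in the excerpt, this yields the proposition. The crucial observation is that
$$
\partial_\theta \psi_+(x,y,t,\theta) = -(t+1)
$$
satisfies $|\partial_\theta \psi_+| = t+1 \geq 1$ uniformly on the integration range $t \in [0,\infty)$, so the phase is non-stationary in $\theta$. This is in sharp contrast with the $q=0$ case, where $\partial_\theta \psi_- = t-1$ has a stationary point at $t=1$ that will produce the leading asymptotic of the Szeg\H{o} kernel. I also note that $\operatorname{Im}\psi_+ = t\,\operatorname{Im}\Phi(\mathring{x},\mathring{y}) \geq 0$ by the property of $\Phi$ in Proposition \ref{HsiaoMarinescu for q is n_-}, so $|e^{im\psi_+}| \leq 1$ throughout.

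Next I would introduce the first-order operator
$$
L := \frac{-1}{im(t+1)}\partial_\theta,\qquad L e^{im\psi_+} = e^{im\psi_+},
$$
whose formal transpose in $\theta$ is $L^t = \frac{1}{im(t+1)}\partial_\theta$. Integrating by parts $N$ times in $\theta$ over $[-\pi,\pi]$, the boundary contributions vanish because $\chi_1 \in \mathscr{C}^\infty_0(D)$ has support in $\theta \in (-\pi,\pi)$. One obtains
$$
I_3 = \frac{m\, e^{imx_{2n+1}}}{2\pi}\int_{-\pi}^\pi\!\!\int_0^\infty e^{im\psi_+}\, (L^t)^N\bigl[\chi_1(\mathring{x},\theta)\, a_+((\mathring{x},\theta), y, mt)\, \chi_0(y)\bigr] dt\, d\theta.
$$

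Finally, since $\partial_\theta$ acts only in the $x$-slot of $a_+$, the classical symbol estimate $|\partial_x^\alpha a_+(x,y,t)| \leq C_\alpha(1+t)^n$ gives
$$
\bigl|(L^t)^N [\chi_1 a_+ \chi_0]\bigr| \leq \frac{C_N(1+mt)^n}{m^N(t+1)^N}.
$$
Using $(1+mt)^n \leq (1+m)^n(1+t)^n \leq C m^n(1+t)^n$ for $m \geq 1$, the $t$-integral converges whenever $N > n+1$, producing
$$
|I_3| \leq C'_N\, m \cdot m^{n-N} = C'_N\, m^{n+1-N}.
$$
Taking $N$ arbitrarily large yields $I_3 = O(m^{-\infty})$. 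There is no genuine obstacle; the only point requiring care is to verify that $\partial_\theta$-derivatives of $a_+((\mathring{x},\theta), y, mt)$ preserve the symbol order $n$ in $mt$, which follows because $\theta = x_{2n+1}$ is one of the $x$-variables in the BRT chart.
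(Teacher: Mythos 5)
Your argument is correct and reaches the conclusion by a cleaner route than the paper. Both proofs hinge on the same key observation that $\frac{\partial\psi_+}{\partial\theta}=-(t+1)$ never vanishes for $t\geq 0$, but the paper only exploits this on the piece $I_5$ (the part of $I_3$ cut off by $\chi_2$ near $\theta=0$, further split over $0\leq t\leq 1$ and $t>1$); for the remaining piece $I_6$ it switches to a case analysis on whether $\mathring{x}=\mathring{y}$, a dyadic decomposition $\sum_{j}\tau_j(t)$, and integration by parts in $t$ using $\partial_t\psi_+=-(\theta-y_{2n+1}+\overline{\Phi})\neq 0$, with a separate $\theta$-integration by parts to remove the boundary term at $t=0$. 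You observe, correctly, that since $|\partial_\theta\psi_+|=t+1\geq 1$ uniformly on the whole integration range and $\chi_1(\mathring{x},\theta)$ has compact $\theta$-support in $(-\pi,\pi)$, non-stationary phase in $\theta$ alone disposes of all of $I_3$ at once; the paper's extra decomposition is not needed for $q=n$ (it mirrors the structure of the $q=0$ analysis, where $\partial_\theta\psi_-=t-1$ genuinely vanishes at $t=1$). Your bookkeeping of the symbol order is also right: $\partial_\theta$ lands in the $x_{2n+1}$-slot of $a_+$ and preserves the order $n$ in $mt$, so $N>n+1$ integrations by parts make the $t$-integral absolutely convergent and give $O(m^{n+1-N})$. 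One point you should make explicit: because $a_+$ has order $n>-1$, the $dt$-integral defining $I_3$ is only an oscillatory integral, so before interchanging it with the $\theta$-integration by parts you should regularize (insert a cutoff $\tau(\epsilon t)$ and let $\epsilon\to 0$, or use the dyadic partition the paper takes from H\"ormander); since the regularizer depends only on $t$ it commutes with $\partial_\theta$, and the limit is justified by dominated convergence once $N>n+1$. With that sentence added, together with the routine remark that $x,y$-derivatives of $I_3$ (required by the definition of $O(m^{-\infty})$) are handled identically after enlarging $N$, your proof is complete.
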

\begin{proof}
Consider a cut-off function $\chi_2(\theta)\in\mathscr{C}^\infty_0(\mathbb{R})$, $\chi_2(\theta)\equiv 1$ when $|\theta|\leq\frac{\pi}{4}$ and $\chi_2(\theta)\equiv 0$ when $\frac{\pi}{2}\leq|\theta|<\pi$. Write
$$
I_3=I_5+I_6
$$
where $I_5$ has the integrand cut off by $\chi_2$ and $I_6$ is the one cut off by
$1-\chi_2$. Since $t\geq 0$, the term $\frac{\partial\psi_+}{\partial\theta}=-t-1\neq 0$ for all $\theta\in (-\pi,\pi)$, so we can write
$e^{im\psi_+}=\frac{\partial}{\partial\theta}\left(\frac{e^{im\psi_+}}{-im(t+1)}\right)$. Take $I_5=I_5'+I_5''$, where $I_5'$ is the integration taken over $0\leq t\leq 1$ and  $I_5''$ is the one taken over $t>1$. By using integration by parts with
respect to $\theta$, we can find both $I_5'=O(m^{-\infty})$ and $I_5''=O(m^{-\infty})$. Thus, $I_5=O(m^{-\infty})$. As for $I_6$, for the case
$\mathring{x}\neq\mathring{y}$, we have ${\rm Im}\psi_+={\rm Im}\Phi(\mathring{x},\mathring{y})>0$, so $\Pi^{(n)}_{\leq\lambda,m}(x,y)=O(m^{-\infty})$ by the elementary inequality that for any $m,N\in\mathbb{N_0}$, $m^N e^{-m}\leq C_N$ for some constant $C_N>0$. For the case
$\mathring{x}=\mathring{y}$, $\psi_+=-(\theta-y_{2n+1})t-\theta$. Notice that we may
assume $\theta-y_{2n+1}\neq 0$ on $I_6$ by taking the open set $D_p$ small enough.
Consider a cut-off function $\tau\in\mathscr{C}^\infty_0(\mathbb{R})$, $\tau(t)\equiv 1$ when $|t|\leq 1$ and $\tau(t)\equiv 0$ when $|t|\geq 2$. Set 
\begin{equation}
    \label{dydadic decomposition}
    \tau_j(t):=\tau(2^{-j}t)-\tau(2^{1-j}t),~j\in\mathbb{N},~\tau_0:=\tau.
\end{equation}
Note that $\sum_{j=0}^\infty \tau_j=1$ and 
\begin{equation}
    \label{dydadic decomposition property}
    2^{j-1}\leq|t|\leq 2^{j+1}~\text{for}~t\in{\rm supp}(\tau_j),~j>0.
\end{equation}
By the construction of oscillatory integral, for example see \cite{hormander2003analysis}*{Theorem 7.8.2}, in this case
\[
  I_6=\frac{e^{imx_{2n+1}}}{2\pi}\sum_{j=0}^\infty \int_{-\pi}^{\pi}\int_0^\infty e^{im\psi_+(x,y,t,\theta)}\tau_j(t)(1-\chi_2(\theta))\chi_1(\mathring{x},\theta)a_+\big((\mathring{x},\theta),y,mt\big)\chi_0(y)dtd\theta.  
\]
Decompose $I_6=I_6'+I_6''$, where
\begin{equation}
    \label{I_6'}
    I_6':=\frac{e^{imx_{2n+1}}}{2\pi}\int_{-\pi}^{\pi}\int_0^\infty e^{im\psi_+(x,y,t,\theta)}\tau_0(t)(1-\chi_2(\theta))\chi_1(\mathring{x},\theta)a_+\big((\mathring{x},\theta),y,mt\big)\chi_0(y)dtd\theta
\end{equation}
and
\begin{equation}
    \label{I_6''}
    I_6'':=\frac{e^{imx_{2n+1}}}{2\pi}\sum_{j=1}^\infty \int_{-\pi}^{\pi}\int_0^\infty e^{im\psi_+(x,y,t,\theta)}\tau_j(t)(1-\chi_2(\theta))\chi_1(\mathring{x},\theta)a_+\big((\mathring{x},\theta),y,mt\big)\chi_0(y)dtd\theta.  
\end{equation}
On one hand, in (\ref{I_6''}), because $e^{-im(-(\theta-y_{2n+1})t-\theta)}=\frac{\partial}{\partial t}\left(\frac{e^{-im(-(\theta-y_{2n+1})t-\theta)}}{-im(\theta-y_{2n+1})}\right)$, we can integration by parts with respect to $t$, and after combining (\ref{dydadic decomposition}), (\ref{dydadic decomposition property}) and $a_+(x,y,t)\in S^n_{\rm cl}(D\times D\times\mathbb{R}_+, T^{*0,n}X\boxtimes T^{*0,n}X)$, we can find that $I_6''=O(m^{-\infty})$. 
On the other hand, in (\ref{I_6'}), we can also integration by parts with respect to $t$; however, the boundary term appears at $t=0$. Fortunately, thanks to $\chi_1$ has compact support in $(-\pi,\pi)$ and $e^{-im(-(\theta-y_{2n+1})t-\theta)}=\frac{\partial}{\partial \theta}\left(\frac{e^{-im(-(\theta-y_{2n+1})t-\theta)}}{im(t+1)}\right)$, we can again apply integration with respect to $\theta$, and no boundary term will appear. In this way, we can also find $I_6'=O(m^{-\infty})$.
\end{proof}
In particular, from (\ref{definition of torus equiv kernel}) and Theorem \ref{Main theorem 2}, we find that:
\begin{proposition}
\label{near Y, q is n}
$\Pi^{(n)}_{\leq\lambda,m,mp_1,\cdots,mp_d}(x,y)=O(m^{-\infty})$ on $X\times X$.
\end{proposition}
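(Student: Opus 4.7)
The plan is to reduce the proposition to a uniform diagonal bound $\mathrm{Tr}\,\Pi^{(n)}_{\leq\lambda,m,mp_1,\ldots,mp_d}(x)=O(m^{-\infty})$ on $X$. Once this is in hand, the reproducing-kernel Cauchy--Schwarz inequality
$$|\Pi^{(n)}_{\leq\lambda,m,mp_1,\ldots,mp_d}(x,y)|_h^2\le \mathrm{Tr}\,\Pi^{(n)}_{\leq\lambda,m,mp_1,\ldots,mp_d}(x)\cdot\mathrm{Tr}\,\Pi^{(n)}_{\leq\lambda,m,mp_1,\ldots,mp_d}(y),$$
which is immediate from (\ref{low energy reproducing kernel in x and y}) by applying the scalar Cauchy--Schwarz inequality in a local orthonormal frame $\{e^J\}$, converts the diagonal bound into the desired off-diagonal estimate on all of $X\times X$. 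The key geometric fact is that $Y$ is $T^d$-invariant: $\omega_0$ is $T^d$-invariant (noted after (\ref{omega_0})) and $T^d$ is abelian so each $T_j$ is $T^d$-invariant; consequently $\mu$ and hence $Y=\mu^{-1}(-p_1,\ldots,-p_d)$ are $T^d$-invariant.

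Step one: record the $q=n$ analogue of the averaging formula (\ref{definition of torus equiv kernel}),
$$\Pi^{(n)}_{\leq\lambda,m,mp_1,\ldots,mp_d}(x,y)=\frac{1}{(2\pi)^d}\int_{T^d}\Pi^{(n)}_{\leq\lambda,m}\bigl((e^{i\theta_1},\ldots,e^{i\theta_d})\circ x,y\bigr)\,e^{-im\sum_{j=1}^{d}p_j\theta_j}\,d\theta_1\cdots d\theta_d,$$
whose derivation is identical to the case $q=0$ because the Hermitian metric is $T^d$-invariant and each $T_j$ commutes with $\overline{\partial}_b$. Step two: cover the compact set $Y$ by finitely many BRT patches $D_{p_1},\ldots,D_{p_N}$ from Theorem \ref{BRT patch} with $p_i\in Y$, so that $\mathcal{U}:=\bigcup_{i=1}^{N}D_{p_i}$ is an open neighborhood of $Y$, and fix a smaller open neighborhood $\Omega$ of $Y$ satisfying $T^d\cdot\Omega\subset\mathcal{U}$; this is possible because $Y$ is $T^d$-invariant and both $Y$ and $T^d$ are compact.

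Step three: estimate the diagonal. For $x\in X\setminus\Omega$, Theorem \ref{Main theorem 2} applied at $y=x$ yields $\mathrm{Tr}\,\Pi^{(n)}_{\leq\lambda,m,mp_1,\ldots,mp_d}(x)=O(m^{-\infty})$ directly. For $x\in\Omega$, apply the averaging formula together with the triangle inequality to obtain
$$|\Pi^{(n)}_{\leq\lambda,m,mp_1,\ldots,mp_d}(x,x)|_h\le \frac{1}{(2\pi)^d}\int_{T^d}|\Pi^{(n)}_{\leq\lambda,m}(g\cdot x,x)|_h\,dg.$$
For each $g\in T^d$, both $g\cdot x$ and $x$ lie in $\mathcal{U}$, so Cauchy--Schwarz for the circle-equivariant kernel gives $|\Pi^{(n)}_{\leq\lambda,m}(g\cdot x,x)|_h^2\le \mathrm{Tr}\,\Pi^{(n)}_{\leq\lambda,m}(g\cdot x)\cdot\mathrm{Tr}\,\Pi^{(n)}_{\leq\lambda,m}(x)$. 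Each trace factor is $O(m^{-\infty})$ by Proposition \ref{q=n circle equiv kernel} applied in whichever BRT patch $D_{p_i}$ contains the point, and this bound is uniform over the finite cover and the compact torus $T^d$, so integration preserves the $O(m^{-\infty})$ decay.

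The main technical obstacle I anticipate is ensuring that the $O(m^{-\infty})$ estimates are uniform in the orbit parameter $g\in T^d$ and in $x\in\Omega$, since Proposition \ref{q=n circle equiv kernel} is phrased patchwise; this uniformity is secured precisely by the finite BRT cover chosen compatibly with the $T^d$-invariant set $Y$ and by compactness of $T^d$. A secondary routine check is the form-valued Cauchy--Schwarz bound, which reduces to the scalar one by expanding in the local frame $\{e^J\}$ and summing componentwise.
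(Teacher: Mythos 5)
Your ingredients are exactly the paper's: the $T^d$-averaging formula (\ref{definition of torus equiv kernel}), Theorem \ref{Main theorem 2} away from $Y$, and Proposition \ref{q=n circle equiv kernel} near $Y$ (the paper's own ``proof'' is a one-sentence citation of these). Your reduction to a diagonal trace bound via the reproducing-kernel Cauchy--Schwarz inequality is a clean way to globalize to all of $X\times X$; in fact it handles the mixed configurations (say $x$ near $Y$, $y$ far from $Y$, or $x,y$ in different BRT patches) more transparently than the paper's terse combination of a statement on $(X\setminus\Omega)\times(X\setminus\Omega)$ with patchwise statements on $D_p\times D_p$. The covering argument, the $T^d$-invariance of $Y$, and the uniformity over the finite cover and the compact torus are all fine, with the small bookkeeping point that the patches you choose must be the shrunken neighborhoods $D_{p_i}$ on which Proposition \ref{q=n circle equiv kernel} actually holds, not the full BRT patches.

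The one genuine gap is that your reduction only controls the kernel itself, not its derivatives, whereas $O(m^{-\infty})$ in this paper is defined to include $|\partial_x^\alpha\partial_y^\beta A_m(x,y)|\leq C_{K,\alpha,\beta,N}m^{-N}$ for all $\alpha,\beta$. The bound $|\Pi(x,y)|_h^2\leq{\rm Tr}\,\Pi(x)\cdot{\rm Tr}\,\Pi(y)$ is the case $\alpha=\beta=0$ of
$$
\bigl|\partial_x^\alpha\partial_y^\beta\Pi_{I,J}(x,y)\bigr|^2\leq\Bigl(\sum_j|\partial^\alpha f_{j,I}(x)|^2\Bigr)\Bigl(\sum_j|\partial^\beta f_{j,J}(y)|^2\Bigr),
$$
and the diagonal trace ${\rm Tr}\,\Pi(x)$ only dominates the $\alpha=0$ factor. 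The repair is routine and uses the same sources: the derivative trace $\sum_j|\partial^\alpha f_{j,I}(x)|^2=\sup\{|\partial^\alpha u_I(x)|^2:u\in\mathcal{H},\ \|u\|_X=1\}$ is $O(m^{-\infty})$ away from $Y$ by the estimate (\ref{Ck estimate}) in the proof of Theorem \ref{Main theorem 2} (which is stated for arbitrary differential operators $P$), and near $Y$ it equals a mixed derivative $\partial_x^\alpha\partial_y^\alpha$ of the circle-equivariant kernel restricted to the diagonal, which is $O(m^{-\infty})$ because Proposition \ref{q=n circle equiv kernel} is a full $m$-negligibility statement with derivatives (together with the inclusion $\mathcal{H}^n_{b,\leq\lambda,m,mp_1,\cdots,mp_d}(X)\subset\mathcal{H}^n_{b,\leq\lambda,m}(X)$, which lets you dominate the torus-equivariant derivative traces by the circle-equivariant ones without even invoking the averaging formula). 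With that amendment your argument is complete.
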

Next, for the case $q=0$, take the point $p\in Y$ which we set in the beginning of this section, and we can find that at the point
$$
(x,y,t,\theta)=(p,p,1,0),
$$
there are
$$
{\rm Im}\psi_-=0,~\frac{\partial\psi_-}{\partial t}=0,~\frac{\partial\psi_-}{\partial\theta}=0,
$$
and
$$
\det \psi_-^{''}=\det
\begin{bmatrix}
    \frac{\partial^2\psi_-}{\partial t^2}                 &\frac{\partial^2\psi_-}{\partial\theta\partial t} \\
    \frac{\partial^2\psi_-}{\partial t\partial\theta}     & \frac{\partial^2\psi_-}{\partial\theta^2} \\
    \end{bmatrix}=\det
\begin{bmatrix}
    0       & -1 \\
    -1      & 0  \\
    \end{bmatrix}=-1\neq 0.
$$
Thus the point $(x,y,t,\theta)=(p,p,1,0)$ satisfies the assumption in Proposition \ref{complex critical point}. Moreover, when $(x,y)$ varies near $(p,p)$ we can have the Melin--Sj\"ostrand critical value in Proposition \ref{complex critical point}, because the system of equations
$$
\frac{\partial\tilde{\psi}_-}{\partial {\tilde{t}}}(x,y,\tilde{t},\tilde{\theta})=0
$$
and
$$
\frac{\partial\tilde{\psi}_-}{\partial {\tilde{\theta}}}(x,y,\tilde{t},\tilde{\theta})=0
$$
also has the solution
$$
(\tilde{t},\tilde{\theta})=\bigg(1,y_{2n+1}-\Phi(\mathring{x},\mathring{y})\bigg)\in\mathbb{C}^2
$$
where
$$
\tilde{\psi}_-(x,y,\tilde{t},\tilde{\theta})=\bigg(\tilde{\theta}-y_{2n+1}+\Phi(\mathring{x},\mathring{y})\bigg)\tilde{t}-\tilde{\theta}
$$
is an almost analytic extension of $\psi_-$ with respect to $(t,\theta)$. Therefore, we consider the decomposition also denoted by
$$
I_1=I_7+I_8,
$$
where $I_7$ is the one cut off by a bump function $\chi_3(t,\theta)\in\mathscr{C}^\infty_0(\mathbb{R}^2)$ and $I_8$ is the one cut off by
$1-\chi_3$. Here, $\chi_3$ satisfies $\chi_3\equiv 1$ near $(t,\theta)=(1,0)$, ${\rm supp}\left(\chi_3(t,\theta)\right)\subset [\frac{1}{2},\frac{3}{2}]\times[-\frac{\pi}{2},\frac{\pi}{2}]$. 

On one hand, similar to
the proof of Proposition \ref{q=n circle equiv kernel}, consider a cut-off function
$\tau\in\mathscr{C}^\infty_0(\mathbb{R})$, $\tau(t)\equiv 1$ when $|t|\leq 1$ and $\tau(t)\equiv 0$ when $|t|\geq 2$. We also set $\tau_j$ as in (\ref{dydadic decomposition}), $j\in\mathbb{N}$. Again, by taking $D_p$ small enough, we may assume that $\partial_t\psi_-:=\frac{\partial\psi_-}{\partial t}=\theta-y_{2n+1}+\Phi(\mathring{x},\mathring{y})\neq 0$ on $I_8$. Take the decomposition $I_8:=I_8'+I_8''$, where
\begin{equation}
    \label{I_8'}
I_8':=\frac{m e^{imx_{2n+1}}}{2\pi}\int_{-\pi}^{\pi}\int_0^\infty e^{im\psi_-(x,y,t,\theta)}\tau_0(t)(1-\chi_3)(t,\theta)\chi_1(\mathring{x},\theta)a_-\big((\mathring{x},\theta),y,mt\big)\chi_0(y)dtd\theta    
\end{equation}
and
\begin{equation}
    \label{I_8''}
    I_8'':=\frac{m e^{imx_{2n+1}}}{2\pi}\sum_{j=1}^\infty\int_{-\pi}^{\pi}\int_0^\infty e^{im\psi_-(x,y,t,\theta)}\tau_j(t)(1-\chi_3)(t,\theta)\chi_1(\mathring{x},\theta)a_-\big((\mathring{x},\theta),y,mt\big)\chi_0(y)dtd\theta,
\end{equation}
By $e^{im\psi_-}=\frac{\partial}{\partial t}\left(\frac{e^{im\psi_-}}{im\partial_t\psi_-}\right)$, for (\ref{I_8''}), we can take integration by parts with respect to $t$, and combine with (\ref{dydadic decomposition}), (\ref{dydadic decomposition property}) and $a_-(x,y,t)\in S^n_{\rm cl}(D\times D\times\mathbb{R}_+)$ to show that $I_8''=O(m^{-\infty})$. Also, since $\chi_1$ has support in $(-\pi,\pi)$, for (\ref{I_8'}), we can apply integration by parts with respect to $t$ and $\theta$ as in Proposition \ref{q=n circle equiv kernel} to show that $I_8'=O(m^{-\infty})$.

On the other hand, by the Melin--Sj{\"o}strand complex stationary phase formula, see Theorem \ref{MS formula}, up to an element in $O(m^{-\infty})$, the $I_7$ can be written into 
$$
\frac{m}{2\pi}e^{im(\tilde{\psi}_-(x,y,1,y_{2n+1}-\Phi(\mathring{x},\mathring{y}))+x_{2n+1})}A(x,y,m)=e^{im({x_{2n+1}-y_{2n+1}+\Phi(\mathring{x},\mathring{y})})}A(x,y,m),
$$
where 
\begin{equation}
\label{A(x,y,m)}
    A(x,y,m):=\frac{\tilde{\chi}_3(1,y_{2n+1}-\Phi(\mathring{x},\mathring{y}))\tilde{\chi}_1(x,y_{2n+1}-\Phi(\mathring{x},\mathring{y}))\tilde{a}_-((\mathring{x},y_{2n+1}-\Phi(\mathring{x},\mathring{y})),y,m)\chi_0(y)}{\det\left(\frac{m\tilde{\psi}''_{-}\left(x,y,1,y_{2n+1}-\Phi(\mathring{x},\mathring{y})\right)}{2\pi i}\right)^{\frac{1}{2}}},
\end{equation}
is in the symbol space $S^n_{\rm loc}(1;D_p\times D_p)$,
$\tilde{\chi}_3$, $\tilde{\chi}_1$ and $\tilde{a}_-$ is an almost analytic extensions of $\chi_3$, $\chi_1$ and $a_-$ in the varaible $(t,\theta)$, respectively, and $\tilde{\psi}''_{-}:=\begin{bmatrix}
    \frac{\partial^2\tilde{\psi}_-}{\partial \tilde{t}^2}                 &\frac{\partial^2\tilde{\psi}_-}{\partial\tilde{\theta}\partial \tilde{t}} \\
    \frac{\partial^2\tilde{\psi}_-}{\partial \tilde{t}\partial\tilde{\theta}}     & \frac{\partial^2\tilde{\psi}_-}{\partial\tilde{\theta}^2} \\
    \end{bmatrix}$. Note that by basic properties of symbol space, we have the expansion $A(x,y,m)\sim\sum_{j=0}^\infty m^{n-j}A_j(x,y)$ in $S^n_{\rm loc}(1;D_p\times D_p)$, where $A_j(x,y)\in\mathscr{C}^\infty(D_p\times D_p)$, and by the construction of almost analytic extension, we can find that
\begin{equation}
    \label{symbol A(x,x)}
    A_0(p,p)=(a_-)_0(p,p)=\frac{\det\mathcal{L}_p}{2\pi^{n+1}}.
\end{equation}
We sum up the discussion so far as the following local result:
\begin{proposition}
\label{asymptotic expansion of circle equiv Szego kernel}
Let $p\in Y$, $D$ be a BRT coordinates patch as in Theorem \ref{BRT patch} near $p$ and take an small enough $D_p\subset D$ such that $\overline{D}_p\subset D$. For any $(x,y)\in D_p\times D_p$, the $m$-th Fourier component of the Szeg\H{o} kernel on lower energy functions is
$$
\Pi_{\leq\lambda,m}^{(0)}(x,y)\equiv e^{im (x_{2n+1}-y_{2n+1}+\Phi(\mathring{x},\mathring{y}))}A(x,y,m)~{\rm mod}~O(m^{-\infty}).
$$
Here, $\mathring{x}:=(x_1,\cdots,x_{2n})$, $\mathring{y}:=(y_1,\cdots,y_{2n})$; the function $\Phi(\mathring{x},\mathring{y})$ is a complex-valued function satisfying:
 $$
 {\rm Im}\Phi(\mathring{x},\mathring{y})\geq C|\mathring{x}-\mathring{y}|^2,~\text{for some constant}~C>0,
 $$
 and
 $$
 \Phi(\mathring{x},\mathring{y})=-\overline{\Phi(\mathring{y},\mathring{x})},~\Phi(\mathring{x},\mathring{y})=0~\text{if and only if}~\mathring{x}=\mathring{y}
 $$
 for all $(x,y)\in D_p\times D_p$; and
$A(x,y,m)\in S^n_{\rm loc}(1;D_p\times D_p)$ with asymptotic expansion
$$
A(x,y,m)\sim\sum_{j=0}^\infty m^{n-j}A_j(x,y)~\text{in}~S^n_{\rm loc}(1;D_p\times D_p),~A_j(x,y)\in\mathscr{C}^\infty(D_p\times D_p),A_0(p,p)=\frac{\det\mathcal{L}_p}{2\pi^{n+1}}.
$$
\end{proposition}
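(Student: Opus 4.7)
My plan is to realize $\Pi^{(0)}_{\leq\lambda,m}(x,y)$ on $D_p \times D_p$ as an oscillatory integral whose large parameter is $m$, and then apply the Melin--Sj\"ostrand complex stationary phase formula. The starting point is the identity
$\Pi^{(0)}_{\leq\lambda,m}(x,y)=\frac{1}{2\pi}\int_{-\pi}^{\pi}\Pi^{(0)}_{\leq\lambda}(e^{i\theta}\circ x,y)e^{-im\theta}d\theta$
together with Proposition~\ref{HsiaoMarinescu for q is n_-}, which represents $\Pi^{(0)}_{\leq\lambda}$ on $D\times D$ as a complex-phase Fourier integral operator $\int_0^\infty e^{i\phi_-(x,y)t}a_-(x,y,t)dt$ with $\phi_-=x_{2n+1}-y_{2n+1}+\Phi(\mathring{x},\mathring{y})$ and $a_-\in S^n_{\rm cl}$. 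The properties of $\Phi$ claimed in the statement are inherited directly from this reference.

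First I would insert cut-offs $\chi_0,\chi_1\in\mathscr{C}^\infty_0(D)$ with $\chi_0\equiv 1$ on $D_p$ and $\chi_1\equiv 1$ on $\mathrm{supp}(\chi_0)$, writing $\Pi^{(0)}_{\leq\lambda,m}=I_1+I_2$ exactly as in (\ref{I_1})--(\ref{I_2}). The term $I_2$ is negligible because $(1-\chi_1)\Pi^{(0)}_{\leq\lambda}\chi_0$ is smoothing by Proposition~\ref{HsiaoMarinescu for q is n_-}, and integration by parts in $\theta\in(-\pi,\pi)$ on the circle kills all boundary contributions by periodicity. In $I_1$ I rescale $t\mapsto mt$, so that $I_1$ becomes an oscillatory integral with phase $m\psi_-(x,y,t,\theta)$, where $\psi_-=(\theta-y_{2n+1}+\Phi(\mathring{x},\mathring{y}))t-\theta$, and amplitude $m\,a_-((\mathring{x},\theta),y,mt)$; the order count $a_-\in S^n_{\rm cl}$ combined with this rescaling is what produces the expected $S^n_{\rm loc}(1;D_p\times D_p)$ symbol class.

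Next I localize near the critical point. At $(x,y,t,\theta)=(p,p,1,0)$ one checks $\mathrm{Im}\,\psi_-=0$, $\partial_t\psi_-=\partial_\theta\psi_-=0$, and the Hessian in $(t,\theta)$ has determinant $-1$, so the assumptions of Proposition~\ref{complex critical point} are met. I introduce a bump $\chi_3(t,\theta)$ equal to $1$ near $(1,0)$ and supported in $[\tfrac{1}{2},\tfrac{3}{2}]\times[-\tfrac{\pi}{2},\tfrac{\pi}{2}]$, obtaining $I_1=I_7+I_8$. For $I_8$ I mimic the argument given for $I_6$ in Proposition~\ref{q=n circle equiv kernel}: shrinking $D_p$ ensures $\partial_t\psi_-\neq 0$ on $\mathrm{supp}(1-\chi_3)$ away from $t=0$, so a dyadic decomposition $\{\tau_j\}_{j\ge 1}$ in $t$ together with integration by parts in $t$ (using $a_-\in S^n_{\rm cl}$ and the uniform bound $|t|\approx 2^j$ on $\mathrm{supp}\,\tau_j$) yields $O(m^{-\infty})$; on the remaining piece with $\tau_0(t)$ the $t$-integral is over $[0,2]$, and integration by parts in $\theta$ (which has no boundary contribution thanks to $\chi_1$) handles it, again because $\partial_\theta\psi_-=t-1$ is nonzero on $\mathrm{supp}(1-\chi_3)\cap\mathrm{supp}(\tau_0)$.

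Finally I apply Theorem~\ref{MS formula} to $I_7$. Solving $\partial_{\tilde t}\tilde\psi_-=\partial_{\tilde\theta}\tilde\psi_-=0$ gives the complex critical point $(\tilde t,\tilde\theta)=(1,\,y_{2n+1}-\Phi(\mathring{x},\mathring{y}))$, and the corresponding critical value is $\tilde\psi_-|_{\rm crit}=-y_{2n+1}+\Phi(\mathring{x},\mathring{y})$. Combined with the external factor $e^{imx_{2n+1}}$ this produces the phase $e^{im(x_{2n+1}-y_{2n+1}+\Phi(\mathring{x},\mathring{y}))}$. The symbol $A(x,y,m)$ appears as in (\ref{A(x,y,m)}); the standard asymptotic machinery for complex stationary phase delivers $A\sim\sum_j m^{n-j}A_j(x,y)$ in $S^n_{\rm loc}(1;D_p\times D_p)$, and plugging in $\det\tilde\psi''_-(p,p,1,0)=-1$ together with $(a_-)_0(p,p)=\tfrac{1}{2\pi^{n+1}}|\det\mathcal{L}_p|$ gives $A_0(p,p)=\tfrac{\det\mathcal{L}_p}{2\pi^{n+1}}$ (the determinant is positive since $\mathcal{L}$ is positive near $Y$, so the absolute value drops). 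The main technical obstacle is the non-compact $t$-integration in $I_8$: only the combination of the dyadic decomposition, the classical nature of $a_-$, and the careful choice of the direction of integration by parts on each piece forces the remainder to be smoothing, and one must be equally careful when stitching the almost analytic extensions of $\chi_3$, $\chi_1$ and $a_-$ into the formula for $A(x,y,m)$ to see that the resulting symbol does live in $S^n_{\rm loc}(1;D_p\times D_p)$.
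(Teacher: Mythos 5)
Your proposal is correct and follows essentially the same route as the paper's own argument: the same reduction of $\Pi^{(0)}_{\leq\lambda,m}$ to the $\theta$-integral against the Boutet--Sj\"ostrand representation from Proposition \ref{HsiaoMarinescu for q is n_-}, the same cut-off decomposition $I_1+I_2$ and then $I_7+I_8$, the same dyadic decomposition and integration by parts to dispose of $I_8$, and the same application of Theorem \ref{MS formula} at the critical point $(\tilde t,\tilde\theta)=(1,\,y_{2n+1}-\Phi(\mathring{x},\mathring{y}))$ yielding the phase $x_{2n+1}-y_{2n+1}+\Phi(\mathring{x},\mathring{y})$ and the leading coefficient $\frac{\det\mathcal{L}_p}{2\pi^{n+1}}$. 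No substantive differences to report.
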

Before preceding, note that in view of Proposition \ref{HsiaoMarinescu for q is n_-}, we see that $\Pi^{(0)}_{\leq\lambda}$ is smoothing away from the diagonal. From this observation, we can eother use integration by parts with respect to $\theta$ in (\ref{original circle equivariant Szego kernel}) for the case near $Y$ or apply Thoerem \ref{Main theorem 2} for the case away from $Y$ to show that :
\begin{proposition}
\label{circle equiv kernel is smoothing when x not y}
For $(q_1,q_2)\in X\times X$ such that $q_1$ and $q_2$ are not in the same $S^1$-orbit, then on $U\times V$, $\Pi_{\leq\lambda,m}^{(0)}(x,y)=O(m^{-\infty})$, where $U$ and $V$ are some open neighborhoods of $q_1$ and $q_2$, respectively.
\end{proposition}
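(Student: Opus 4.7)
The plan is to start from the reproducing formula (\ref{original circle equivariant Szego kernel}),
\[
\Pi^{(0)}_{\leq\lambda,m}(x,y)=\frac{1}{2\pi}\int_{-\pi}^{\pi}\Pi^{(0)}_{\leq\lambda}(e^{i\theta}\circ x,y)\,e^{-im\theta}\,d\theta,
\]
and exploit the not-in-same-orbit hypothesis to keep the two arguments of $\Pi^{(0)}_{\leq\lambda}$ separated for every $\theta\in[-\pi,\pi]$. The analysis then splits according to the position of $q_1,q_2$ relative to $Y$: when both points sit in a small neighborhood of $Y$, Proposition \ref{HsiaoMarinescu for q is n_-} gives off-diagonal smoothness that can be exploited via integration by parts in $\theta$; when at least one lies outside a neighborhood of $Y$, Theorem \ref{Main theorem 2} (or more precisely the pointwise estimates coming out of its proof) supplies the decay.

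A preliminary geometric step is needed. Since the orbit $S^1\cdot q_1$ is compact in $X$ and does not contain $q_2$, there exist disjoint open sets $W\supset S^1\cdot q_1$ and $V\ni q_2$. By continuity of the $S^1$-action and compactness of $S^1$, I can then shrink a neighborhood $U$ of $q_1$ so that $e^{i\theta}\circ x\in W$ for every $(x,\theta)\in U\times[-\pi,\pi]$. In particular, for $(x,y)\in U\times V$ and any $\theta$, the point $e^{i\theta}\circ x$ remains bounded away from $y$.

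For the first case, when both $q_1,q_2$ belong to a small neighborhood $\Omega$ of $Y$, I would arrange $W,V\subset\Omega$ and pick cutoffs $\chi\in\mathscr{C}^\infty_0(W)$ with $\chi\equiv 1$ on $S^1\cdot q_1$ and $\tau\in\mathscr{C}^\infty_0(V)$ with $\tau\equiv 1$ near $q_2$. By Proposition \ref{HsiaoMarinescu for q is n_-} the composition $\chi\,\Pi^{(0)}_{\leq\lambda}\,\tau$ is smoothing, hence its Schwartz kernel $\chi(e^{i\theta}\circ x)\,\Pi^{(0)}_{\leq\lambda}(e^{i\theta}\circ x,y)\,\tau(y)$ is jointly smooth in $(\theta,x,y)\in[-\pi,\pi]\times U\times V$. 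Writing $e^{-im\theta}=(im)^{-1}\partial_\theta e^{-im\theta}$ and integrating by parts $N$ times in $\theta$, the boundary contributions at $\theta=\pm\pi$ cancel by $2\pi$-periodicity of the $S^1$-action on $X$; since $N$ is arbitrary, one obtains $\Pi^{(0)}_{\leq\lambda,m}(x,y)=O(m^{-\infty})$ on $U\times V$. This is essentially the same smoothing-plus-periodicity device that was already used on the term $I_2$ in (\ref{I_2}).

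For the second case, when at least one of $q_1,q_2$ lies outside a closed neighborhood of $Y$, the off-diagonal smoothing of Proposition \ref{HsiaoMarinescu for q is n_-} is no longer available, and I would instead invoke the pointwise estimates (\ref{C0 estimate})--(\ref{Ck estimate}) from the proof of Theorem \ref{Main theorem 2}: decomposing an orthonormal basis of $\mathcal{H}^{0}_{b,\leq\lambda,m}(X)$ into its finitely many nonzero torus isotypical components, applying those estimates to each component at points outside $Y$, and reassembling via the reproducing identity (\ref{low energy reproducing kernel in x and y}) to conclude the required rapid decay. The main obstacle I anticipate is precisely this second case: Theorem \ref{Main theorem 2} is formulated for a fixed torus weight $(p_1,\ldots,p_d)$ and a single preimage $Y$, so care is required either to verify uniformity of the constants across the torus weights that can actually support the low-energy space, or to argue that only weights close to $(p_1,\ldots,p_d)$ can contribute at the relevant scale. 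Case one, by contrast, reduces to a routine periodicity argument once the orbit-separation geometry has been fixed.
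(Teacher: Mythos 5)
Your proposal follows essentially the same route as the paper, which disposes of this proposition in a single remark: near $Y$ one combines the off-diagonal smoothing of $\Pi^{(0)}_{\leq\lambda}$ from Proposition \ref{HsiaoMarinescu for q is n_-} with integration by parts in $\theta$ in (\ref{original circle equivariant Szego kernel}) (boundary terms vanishing by periodicity, exactly as for $I_2$), and away from $Y$ one appeals to the Theorem \ref{Main theorem 2} machinery; your orbit-separation step and your case 1 are precisely this argument made explicit. Your worry about case 2 is legitimate --- Theorem \ref{Main theorem 2} concerns the fully torus-equivariant kernel, not $\Pi^{(0)}_{\leq\lambda,m}$, and uniformity over torus weights is not addressed in the paper either --- but it is harmless for the paper's purposes: since $\mu$ is torus invariant, $Y$ is invariant under the whole $S^1\times T^d$ action, and every application of this proposition in the text has both arguments lying in (torus translates of) BRT patches around points of $Y$, so only your case 1 is ever used.
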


Now, for $x\in D$, if $(e^{i\theta_1},\cdots,e^{i\theta_d})\circ x\in D$, let 
$$
x':=x'(\theta_1,\cdots,\theta_d):=(\mathring{x}',x_{2n+1}'):=(e^{i\theta_1},\cdots,e^{i\theta_d})\circ(\mathring{x},x_{2n+1}).
$$
Recall that
\begin{equation}
    \label{original torus equivariant kernel}
        \Pi_{\leq \lambda,m,mp_1,\cdots,mp_d}^{(0)}(x,y)
        =\frac{1}{(2\pi)^d}\int_{T^d}\Pi^{(0)}_{\leq\lambda,m}\left((e^{i\theta_1},\cdots,e^{i\theta_d})\circ x,y\right)e^{-im\sum_{j=1}^d p_j\theta_j}d\theta_1,\cdots d\theta_d.
\end{equation}
Let $p\in Y$ with BRT coordinates patch $D$ near $p$ as in Theorem \ref{BRT patch} and $(x,y)\in D\times D$. We first observe that $(e^{i\theta_1},\cdots,e^{i\theta_d})\circ x\notin D$, then 
$$
{\Pi^{(0)}_{\leq\lambda,m}\left((e^{i\theta_1},\cdots,e^{i\theta_d})\circ x,y\right)}=O(m^{-\infty}).
$$
One way to see this is by writing
$$
{\Pi^{(0)}_{\leq\lambda,m}\left((e^{i\theta_1},\cdots,e^{i\theta_d})\circ x,y\right)}=e^{imy_{2n+1}}{\Pi^{(0)}_{\leq\lambda,m}\left((e^{i\theta_1},\cdots,e^{i\theta_d})\circ x,\hat{y}\right)}.
$$
Clearly, $(e^{i\theta_1},\cdots,e^{i\theta_d})\circ x$ and $\hat{y}$ must not in the same $S^1$-orbit, otherwise there exists a $\mathring{\theta}\in(-\pi,\pi)$ such that $(e^{i\theta_1},\cdots,e^{i\theta_d})\circ x=e^{i\mathring{\theta}}\circ\hat{y}=(\mathring{y},\mathring{\theta})\in D$ leading to a contradiction. From Proposition \ref{circle equiv kernel is smoothing when x not y}, this implies that
$$
{\Pi^{(0)}_{\leq\lambda,m}\left((e^{i\theta_1},\cdots,e^{i\theta_d})\circ x,y\right)}=O(m^{-\infty}).
$$
In view of (\ref{original torus equivariant kernel}), for simplicity, we assume
\begin{equation}
    \label{assumption 1 on group action for BRT calculating torus equiv kernel}
   (e^{i\theta_1},\cdots,e^{i\theta_d})\circ x\in D~\text{for all}~
(\theta_1,\cdots,\theta_d)\in T^d~\text{and}~x\in D. 
\end{equation}
from now on. Moreover,
\begin{proposition}
Let $p\in Y$ and $D_p:=\tilde{D}_p\times(-\epsilon,\epsilon)$, where $\epsilon$ is a small number, as in Proposition \ref{asymptotic expansion of circle equiv Szego kernel}, for $(x,y)\in D_p\times D_p$, if $(e^{i\theta_1},\cdots,e^{i\theta_d})\circ x\notin D_p$, then $\Pi^{(0)}_{\leq\lambda,m}((e^{i\theta_1},\cdots,e^{i\theta_d})\circ x,y)=O(m^{-\infty})$.
\end{proposition}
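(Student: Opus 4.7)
The plan is to reduce the claim to Proposition \ref{circle equiv kernel is smoothing when x not y}. Writing $x':=(e^{i\theta_1},\cdots,e^{i\theta_d})\circ x$, I would split into two cases depending on whether $x'$ and $y$ lie in the same $S^1$-orbit of $X$ or not. In the first (easy) case, Proposition \ref{circle equiv kernel is smoothing when x not y} applies directly to the pair $(x',y)$ and gives $\Pi^{(0)}_{\leq\lambda,m}(x',y)=O(m^{-\infty})$ on small neighborhoods of $x'$ and $y$.

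The real work is to rule out the second case, that is, to show that if $x'\notin D_p$ then $x'$ and $y$ cannot lie in the same $S^1$-orbit. Supposing otherwise, $x'=e^{is}\circ y$ for some $s\in\mathbb{R}/2\pi\mathbb{Z}$, which rearranges to $y=(e^{-is},e^{i\theta_1},\cdots,e^{i\theta_d})\circ x$; thus $x,y\in D_p$ would be related by an element $g\in S^1\times T^d$. By Assumption \ref{assumption 2} the action is free near $Y$, and since $S^1\times T^d$ is compact the action is automatically proper. The tube/slice theorem at $p$ then supplies a product neighborhood $V=V^{S^1}\times V^{T^d}$ of the identity in $S^1\times T^d$ and a neighborhood $W$ of $p$ with the property that any $x,y\in W$ satisfying $y=g\circ x$ forces $g\in V$; in particular the $T^d$-component $(e^{i\theta_1},\cdots,e^{i\theta_d})$ lies in the small neighborhood $V^{T^d}$ of the identity in $T^d$.

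The final step, and the main obstacle, is to choose the BRT patch $D_p=\tilde{D}_p\times(-\epsilon,\epsilon)\subset W$ small enough (shrinking both $\tilde{D}_p$ and $\epsilon$) so that $V^{T^d}\cdot D_p\subset D_p$; this forces $x'\in D_p$ and contradicts the hypothesis $x'\notin D_p$. Since the $T^d$-action commutes with $T_0=\partial/\partial x_{2n+1}$ it preserves the $S^1$-slicing of the BRT chart, and the embedded compact $T^d$-orbit through $p$ (embedded by Assumption \ref{assumption 2} together with compactness of $T^d$) admits an equivariant tube neighborhood; I would intersect this $T^d$-tube with the BRT chart and shrink $\tilde{D}_p$ and $\epsilon$ simultaneously so as to arrange the $V^{T^d}$-invariance. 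The delicate point is the simultaneous compatibility of the $S^1$- and $T^d$-slice structures, but this is a geometric manipulation enabled by the commutation of the two actions and their joint freeness near $Y$.
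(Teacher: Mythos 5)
Your reduction of the ``different $S^1$-orbits'' case to Proposition \ref{circle equiv kernel is smoothing when x not y} matches the paper. The gap is in the other case: you try to show that $x':=(e^{i\theta_1},\cdots,e^{i\theta_d})\circ x$ and $y$ lying in the same $S^1$-orbit is \emph{impossible} when $x'\notin D_p$, but this case is genuinely non-vacuous and cannot be excluded by shrinking $D_p$. Two things go wrong. First, the invariance $V^{T^d}\cdot D_p\subset D_p$ you want is unattainable: since $V^{T^d}$ is an open neighborhood of the identity in the connected group $T^d$, such invariance would force $D_p$ to be invariant under all of $T^d$ and hence to contain the entire compact $d$-dimensional orbit $T^d\cdot p$, which cannot sit inside an arbitrarily small BRT chart (the action being free near $Y$, this orbit is an embedded torus of definite size); shrinking $D_p$ only makes the inclusion worse, since $V^{T^d}$ moves points by a fixed amount. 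Second, even granting the slice-theorem conclusion that the group element $g=(e^{-is},e^{i\theta_1},\cdots,e^{i\theta_d})$ relating $x$ and $y$ is close to the identity, one can still have $x'\notin D_p$: in BRT coordinates $x'=e^{is_0}\circ y=(\mathring{y},\,y_{2n+1}+s_0)$ with $s_0$ small but nonzero, and if $y_{2n+1}$ is close to $\epsilon$ the sum $y_{2n+1}+s_0$ exits the interval $(-\epsilon,\epsilon)$. No choice of $D_p$ eliminates this configuration, so a proof by contradiction along these lines cannot close.

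The paper does not rule this case out; it estimates it. When $x'=(\mathring{y},\mathring{\theta})$ lies on the $S^1$-orbit of $y$ but outside $D_p$, one necessarily has $|\mathring{\theta}|>\epsilon>|y_{2n+1}|$, and since $\Phi(\mathring{y},\mathring{y})=0$ the circle-equivariant kernel becomes an oscillatory integral with phase $(\theta-y_{2n+1})t-\theta$ in which $\theta-y_{2n+1}\neq 0$ on the support of the integrand. Repeated integration by parts in $t$ (with a dyadic decomposition $\sum_j\tau_j(t)$ to control the symbol growth for large $t$, and an integration by parts in $\theta$ to kill the boundary term at $t=0$, using that $\chi_1$ is compactly supported in $\theta$) then yields $O(m^{-\infty})$. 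To repair your argument you would need to replace the attempted contradiction by this kind of non-stationary-phase estimate.
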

\begin{proof}
If $(e^{i\theta_1},\cdots,e^{i\theta_d})\circ x$ and $y$ are in the same $S^1$-orbit, i.e. there is a $\mathring{\theta}\in(-\pi,\pi)$ such that
$(e^{i\theta_1},\cdots,e^{i\theta_d})\circ x=e^{i\mathring{\theta}}\circ\hat{y}=(\mathring{y},\mathring{\theta})\notin D_p$, then there must be $|\theta|>\epsilon>|y_{2n+1}|$. Take cut-off functions $\chi_0,\chi_1\in\mathscr{C}^\infty_0(D)$, where $\chi_0\equiv 1$ on $D_p$ and $\chi_1\equiv 1$ on ${\rm supp}(\chi)$. Pick $\tau\in\mathscr{C}^\infty_0(\mathbb{R})$, where $\tau(t)\equiv 1$ when $|t|\leq 1$ and $\tau(t)\equiv 0$ when $|t|\geq 2$, and set
$\tau_j(t):=\tau(2^{-j}t)-\tau(2^{1-j}t),~j\in\mathbb{N},~\tau_0:=\tau$. From (\ref{I_1+I_2}), (\ref{I_1}), (\ref{I_2}) and $(\ref{circel equivariant Szego kernel on lower energy function})$ we can find
\[
\begin{split}
    ~&
    \Pi^{(0)}_{\leq\lambda,m}((e^{i\theta_1},\cdots,e^{i\theta_d})\circ x,y)\\
    &=\Pi^{(0)}_{\leq\lambda,m}((\mathring{y},\theta),y)\\
    &=\frac{e^{im\mathring{\theta}}}{2\pi}\int_{-\pi}^{\pi}\int_0^\infty e^{im\big((\theta-y_{2n+1}+\Phi(\mathring{y},\mathring{y}))t-\theta\big)}\chi_1(\mathring{y},\theta)a_-((\mathring{y},\theta),y,t)\chi_0(y)dtd\theta\\
    &=\frac{e^{im\mathring{\theta}}}{2\pi}\int_{-\pi}^{\pi}\int_0^\infty e^{im\big((\theta-y_{2n+1})t-\theta\big)}\chi_1(\mathring{y},\theta)a_-((\mathring{y},\theta),y,t)\chi_0(y)dtd\theta\\
    &=\frac{e^{im\mathring{\theta}}}{2\pi}\int_{-\pi}^{\pi}\int_0^\infty e^{im\big((\theta-y_{2n+1})t-\theta\big)}\tau_0(t)\chi_1(\mathring{y},\theta)a_-((\mathring{y},\theta),y,t)\chi_0(y)dtd\theta\\
    &+\sum_{j=1}^\infty\frac{e^{im\mathring{\theta}}}{2\pi}\int_{-\pi}^{\pi}\int_0^\infty e^{im\big((\theta-y_{2n+1})t-\theta\big)}\tau_j(t)\chi_1(\mathring{y},\theta)a_-((\mathring{y},\theta),y,t)\chi_0(y)dtd\theta\\
    &=:I_9+I_{10}.
\end{split}
\]
Since $\theta-y_{2n+1}\neq 0$, we can apply integration by parts with respect to $t$ to show that $I_{10}=O(m^{-\infty})$; moreover, since $\chi_1$ has compact support in $\theta$, we can also apply integration by parts with respect to $\theta$ to show that the boundary term appeared in partial integration with respect to $t$ in $I_{9}$ is also $O(m^{-\infty})$.
So when $(e^{i\theta_1},\cdots,e^{i\theta_d})\circ x$ and $y$ are in the same $S^1$-orbit, $\Pi^{(0)}_{\leq\lambda,m}((e^{i\theta_1},\cdots,e^{i\theta_d})\circ x,y)=O(m^{-\infty})$. And if $(e^{i\theta_1},\cdots,e^{i\theta_d})\circ x$ and $y$ are not in the same $S^1$-orbit, this proposition follows from Proposition \ref{circle equiv kernel is smoothing when x not y}.
\end{proof}
Again, from  (\ref{original torus equivariant kernel}), for simplicity, we assume
\begin{equation}
    \label{assumption 2 on group action for BRT calculating torus equiv kernel}
   (e^{i\theta_1},\cdots,e^{i\theta_d})\circ x\in D_p~\text{for all}~
(\theta_1,\cdots,\theta_d)\in T^d~\text{and}~x\in D_p. 
\end{equation}
from now on. From (\ref{original torus equivariant kernel}), we can write
\begin{equation}
    \label{original torus equivariant kernel on D_p}
    \Pi^{(0)}_{\leq\lambda,m}((e^{i\theta_1},\cdots,e^{i\theta_d})\circ x,y)=\frac{1}{(2\pi)^d}\int_{T^d} e^{im\Psi(x,y,\theta_1,\cdots\theta_d)}{A}(x',y,m)d\theta_1\cdots d\theta_d
\end{equation}
where the phase function is
$$
\Psi(x,y,\theta_1,\cdots\theta_d):=x_{2n+1}'-y_{2n+1}+\Phi(\mathring{x}',w)-\sum_{j=1}^d p_j\theta_j,
$$
and the symbol
$$
B(x,y,\theta_1,\cdots,\theta_d,m):=A(x',y,m).
$$
We shall also notice that when $(\theta_1,\cdots,\theta_d)\neq 0$, there must be $\mathring{x}'\neq \mathring{x}$, otherwise we will have
$$
1\circ(e^{i\theta_1},\cdots,e^{i\theta_d})\circ x=e^{i\theta_0}\circ(1,\cdots,1)\circ x
$$
where $\theta_0:=x_{2n+1}'-x_{2n+1}~{\rm mod}~(-\pi,\pi)$, contradicting Assumption
\ref{assumption 2}. Thus,
$$
{\rm Im}\Psi(x,y,\theta_1,\cdots,\theta_d)={\rm Im}\Phi(\mathring{x}',y)>0~\text{for}~(\theta_1,\cdots,\theta_d)\neq 0.
$$
We hence consider a cut-off function $\chi(\theta_1,\cdots,\theta_d)\in\mathscr{C}^\infty_0(\mathbb{R}^d)$ such that $\chi=1$ near $(\theta_1,\cdots,\theta_d)=(0,\cdots,0)$. Write
$$
\frac{1}{(2\pi)^d}\int_{T^d} e^{im\Psi(x,y,\theta_1,\cdots\theta_d)}{A}(x',y,m)d\theta_1\cdots d\theta_d=I_9+I_{10}
$$
where we cut the integrand of $I_{11}$ by $\chi(\theta_1,\cdots,\theta_d)$ and the one for $I_{12}$ by $1-\chi(\theta_1,\cdots,\theta_d)$. Note that in the integrand of $I_{12}$ we have ${\rm Im}\Psi>0$, so $I_{12}=O(m^{-\infty})$ by the elementary inequality that for any $m,N\in\mathbb{N_0}$, $m^N e^{-m}\leq C_N$ for some constant $C_N>0$. As for the $I_{11}$, we shall apply the Melin--Sj\"ostrand stationary
phase formula Proposition \ref{complex critical point} to establish the asymptotic
expansion for torus equivariant Szeg\H{o} kernel. 

We now fix a point $p\in Y$ and a small enough BRT patch $D_p$ containing $p$. We claim
that the point $(x,y,\theta_1,\cdots,\theta_d)=(p,p,0,\cdots,0)$ satisfies the
requirement in the Proposition $\ref{complex critical point}$. To see this, first note
that in real coordinates $(x,y)=(\mathring{x},x_{2n+1},\mathring{y},y_{2n+1})$ we have
$$
\frac{\partial\Psi}{\partial\theta_j}=-\frac{\partial x_{2n+1}'}{\partial\theta_j}+\frac{\partial\Phi(\mathring{x}',\mathring{y})}{\partial \mathring{x}'}\frac{\partial \mathring{x}'}{\partial\theta_j}-p_j.
$$
Under the local expression of the phase function \cite{HsiaoMarinsecu2017}*{Theorem 3.4} in terms of canonical coordinates, we have relations
\begin{equation}
    \label{phase funciton and contact form}
    \phi_-(x,y)=x_{2n+1}-y_{2n+1}+\Phi(\mathring{x},\mathring{y})~\text{and}~d_x{\phi_-(x,y)}|_{x=y}=-\omega_0(x).
\end{equation}
We can hence get
\begin{equation}
    \label{omega_0 at p}
    -\omega_0(p)=\left.\left(\frac{\partial\phi_{-}(x',y)}{\partial x'}dx'\right)\middle|\right._{\substack{x=y=p\\\theta=0}}=\left(-dx_{2n+1}+\frac{\partial\Phi(\mathring{x}',\mathring{y})}{\partial \mathring{x}'}d\mathring{x}\right)(p,p,0)
\end{equation}
and for $j=1,\cdots,d$,
\begin{equation}
\label{T_0 at p}
    \begin{split}
        T_j(p)
        &:=\left.\frac{\partial}{\partial\theta_j}\left((1,\cdots,e^{i\theta_j},\cdots,1)\circ x\right)\middle|\right._{\substack{x=p\\ \theta_j=0}}\\
        &=\left.\frac{\partial}{\partial\theta_j}\left((e^{i\theta_1},\cdots,e^{i\theta_j},\cdots,e^{i\theta_d})\circ x\right)\middle|\right._{\substack{x=p\\\theta_1=\cdots=\theta_d=0}}\\
        &=\left.\frac{\partial x'}{\partial\theta_j}\frac{\partial}{\partial x}\middle|\right._{\substack{x=p\\ \theta_1=\cdots=\theta_d=0}}\\
        &=\left(\frac{\partial x_{2n+1}'}{\partial\theta_j}\frac{\partial}{\partial x_{2n+1}}+\frac{\partial \mathring{x}'}{\partial\theta_j}\frac{\partial}{\partial \mathring{x}}\right)(p,0).
    \end{split}
\end{equation}
By the definition of  $Y$ and (\ref{omega_0 at p}) and (\ref{T_0 at p}), we know that for each $j=1,\cdots,d$,
\[
p_j=\langle-\omega_0(p),T_j(p)\rangle=\left(-\frac{\partial x_{2n+1}'}{\partial\theta_j}+\frac{\partial\Phi(\mathring{x}',\mathring{y})}{\partial \mathring{x}'}\frac{\partial \mathring{x}'}{\partial\theta_j}\right)(p,p,0).
\]
Thus
\[
\frac{\partial\Psi}{\partial\theta_j}(p,p,0)=0~\text{for all}~j=1,\cdots,d.
\]
Now, for $j,k=1,\cdots,d$, we need to show that
$$
\left(\frac{\partial^2\Psi}{\partial\theta_j\partial\theta_k}\right)_{j,k=1}^d=\left(-\frac{\partial^2 x_{2n+1}'}{\partial\theta_j\partial\theta_k}+\frac{\partial^2\Phi}{\partial \mathring{x}'^2}\frac{\partial \mathring{x}'}{\partial\theta_j}\frac{\partial \mathring{x}'}{\partial\theta_k}+\frac{\partial\Phi}{\partial \mathring{x}'}\frac{\partial^2 \mathring{x}'}{\partial\theta_j\partial\theta_k}\right)_{j,k=1}^d,
$$
where $\frac{\partial^2\Phi}{\partial \mathring{x}'^2}\frac{\partial \mathring{x}'}{\partial\theta_j}\frac{\partial \mathring{x}'}{\partial\theta_k}:=\sum_{a,b=1}^{2n}\frac{\partial^2\Phi}{\partial x'_a\partial x'_b}\frac{\partial x'_a}{\partial\theta_j}\frac{\partial x'_b}{\partial\theta_k}$ and $\frac{\partial\Phi}{\partial \mathring{x}'}\frac{\partial^2 \mathring{x}'}{\partial\theta_j\partial\theta_k}:=\sum_{c=1}^{2n}\frac{\partial\Phi}{\partial x'_c}\frac{\partial^2 x'_c}{\partial\theta_j\partial\theta_k}$, $j,k=1,\cdots,d$, is a non-singular matrix at $(x,y,\theta_1,\cdots,\theta_d)=(p,p,0,\cdots,0)$. Under BRT
coordinates, see Theorem \ref{BRT patch}, write $(z,w)=(z_1,\cdots,z_n,w_1,\cdots,w_n)$,
where $z_j:=x_{2j_1}+i x_{2j}$ and $w_j:=y_{2j-1}+y_{2j}$. In
\cite{HsiaoMarinsecu2017}*{Theorem 3.6}, it suggests that the term
$\Phi(\mathring{x},\mathring{y})$ in the phase function is in the form of
\begin{equation}
    \label{function Phi under BRT}
    \Phi(\mathring{x},\mathring{y})=i(\phi(z)+\phi(w))-2i\sum_{|\alpha|+|\beta|\leq N} \frac{\partial^{\alpha+\beta}\phi}{\partial z^\alpha\partial\overline{z}^\beta}(0)\frac{z^\alpha}{\alpha !}\frac{\overline{w}^\beta}{\beta !}+O(|(z,w)|^{N+1}),
\end{equation}
for every $N\in\mathbb{N}$, where the function $\phi$ is given by $\phi(z)=\sum_{j=1}^n \lambda_j|z_j|^2+O(|z|^3)$ and $\lambda_1,\cdots,\lambda_n>0$ are the eigenvalues of the Levi form $\mathcal{L}_p$ at the point $p$. Hence, at $(x,y,\theta_1,\cdots,\theta_d)=(p,p,0,\cdots,0)$, we have
$$
\frac{\partial\Phi}{\partial \mathring{x}'}(p,p,0)=0.
$$
Observe an easy fact from linear algebra: if $A$ and $B$ are real symmetric matrix, and $B$ is positive definite, then $C:=A+iB$ is non singular. (Consider the orthogonal decomposition $B=P^tP$, and $Q:=P^{-1}$, then $Q^t C Q= Q^t A Q+i$Id. Suppose $\det C=0$, then $-i$ is an eigenvalues of $A$, contradicting the fact that all the eigenvalues of $A$ are real). So it remains to show that
$$
{\rm Im}\left(\frac{\partial^2\Phi}{\partial \mathring{x}'^2}\frac{\partial \mathring{x}'}{\partial\theta_j}\frac{\partial \mathring{x}'}{\partial\theta_k}\right)(p,p,0)~\text{is positive definite}.
$$
Since $\Phi(\mathring{x},\mathring{y})$ has leading term $\sum_{j=1}^n\lambda_j|z_j-w_j|^2$ where $\lambda_j>0$, $j=1,\cdots,d$, we know that 
$$
{\rm Im}\frac{\partial^2\Phi}{\partial \mathring{x}'^2}(p,p,0)~\text{is positive definite}.
$$
To examine whether the submatrix
$$
{\rm Im}\left(\frac{\partial^2\Phi}{\partial \mathring{x}'^2}\frac{\partial \mathring{x}'}{\partial\theta_j}\frac{\partial \mathring{x}'}{\partial\theta_k}\right)(p,p,0)
$$
is positive definite, it sufficies to take any $0\neq u\in M_{d\times 1}(\mathbb{R})$ and $D:=\begin{bmatrix}
\frac{\partial {x}'_1}{\partial\theta_1} &\cdots & \frac{\partial {x}'_1}{\partial\theta_d}\\
\vdots &~ &\vdots\\
\frac{\partial {x}'_{2n}}{\partial\theta_1} &\cdots &\frac{\partial {x}'_{2n}}{\partial\theta_d}
\end{bmatrix}(p,0)$ and check that
$$
u^t{\rm Im}\left(\frac{\partial^2\Phi}{\partial \mathring{x}'^2}\frac{\partial \mathring{x}'}{\partial\theta_j}\frac{\partial \mathring{x}'}{\partial\theta_k}\right)(p,p,0)u=\left((Du)^t{\rm Im}\frac{\partial^2\Phi}{\partial \mathring{x}'^2}(Du)\right)(p,p,0)>0
$$
which is equivalent to examine whether $D$ is of full rank. And this is in fact guaranteed by assumption that the torus action is free near $Y$ and the assumption that $p\in Y$ is a regular level set. One way to see this is to take the map $\sigma_x: T_e T^d\to T_x X$ by $\frac{\partial}{\partial\theta_j}\mapsto T_j=\left.\frac{\partial}{\partial\theta_j}\middle|\right._{\theta_j=0}(1,\cdots,e^{i\theta_j},\cdots,1)\circ x$. For the torus action is free, i.e.~$\{g\in T^d: g\circ x=x, x~\text{near}~Y\}=\{e\}$, the map $\sigma_x$ is injective. So the column vectors 
\[
[T_1,\cdots,T_j](p)=
\begin{bmatrix}
\frac{\partial x_1'}{\partial \theta_1} & \cdots & \frac{\partial x_1'}{\partial \theta_d}\\
\vdots & &\vdots\\
\frac{\partial x_{2n}'}{\partial \theta_1} & \cdots & \frac{\partial x_{2n}'}{\partial \theta_d}\\
\frac{\partial x_{2n+1}'}{\partial \theta_1} & \cdots & \frac{\partial x_{2n+1}'}{\partial \theta_d}
\end{bmatrix}(p,0)
\]
has rank $d$. Also, since $p$ is in a regular level set, we know that 
\[
[d(T_1\lrcorner\omega_0),\cdots,d(T_d\lrcorner\omega_0)](p)=
\begin{bmatrix}
\frac{\partial\langle \omega_0,T_1\rangle_x}{\partial x_1} & \cdots & \frac{\partial\langle \omega_0,T_d\rangle_x}{\partial x_1}\\
\vdots & &\vdots\\
\frac{\partial\langle \omega_0,T_1\rangle_x}{\partial x_{2n}} & \cdots & \frac{\partial\langle \omega_0,T_d\rangle_x}{\partial x_{2n}}\\
\frac{\partial\langle \omega_0,T_1\rangle_x}{\partial x_{2n+1}} & \cdots & \frac{\partial\langle \omega_0,T_d\rangle_x}{\partial x_{2n+1}}
\end{bmatrix}(p)
\]
is of rank $d$. Since the one form $\omega_0$ is torus invariant, we know that $\mathcal{L}_{T_j}\omega_0=0$ for all $j=1,\cdots,d$. So the Cartan formula $\mathcal{L}_{T_j}\omega_0=T_j\lrcorner d\omega_0+d(T_j\lrcorner\omega_0)$
suggests that the one forms $T_j\lrcorner d\omega_0(p)\neq 0$ for all $j=1,\cdots,d$, otherwise for all $j=1,\cdots,d$, $d(T_j\lrcorner\omega_0)(p)=0$, leading to a contradiction. We also note that $T_0\lrcorner d\omega_0\equiv 0$. One way to see this is that under BRT coordinates Theorem \ref{BRT patch} $T_0=\frac{\partial}{\partial x_{2n+1}}$ and $d\omega_0$ is independent of $x_{2n+1
}$. Write
$$
T_j(p)=\sum_{k=1}^{2n+1}\frac{\partial x_k'}{\partial\theta_j}(p,0)\frac{\partial}{\partial x_k},
$$
and if we suppose that
$$
B:=
\begin{bmatrix}
\frac{\partial x_1'}{\partial \theta_1} & \cdots & \frac{\partial x_1'}{\partial \theta_d}\\
\vdots & &\vdots\\
\frac{\partial x_{2n}'}{\partial \theta_1} & \cdots & \frac{\partial x_{2n}'}{\partial \theta_d}
\end{bmatrix}(p,0)
$$
has rank less than $d$, we can find numbers $(\alpha_1,\cdots,\alpha_d)\in\dot{\mathbb{R}}^d$ such that $\sum_{j=1}^d\alpha_k\frac{\partial x_k'}{\partial\theta_j}(p,0)=0$
for all $k=1\cdots,2n$ and $\sum_{j=1}^d \alpha_j\frac{\partial x_{2n+1}'}{\partial\theta_j}(p,0)\neq 0$ (recall $[T_1,\cdots,T_d](p)$ has rank $d$). However, this means that $T_0(p)=\frac{\sum_{j=1}^d\alpha_j T_j(p)}{\sum_{j=1}^d \alpha_j\frac{\partial x_{2n+1}'}{\partial\theta_j}(p,0)}$, which leads to a
contradiction $\left(T_0\lrcorner d\omega_0\right)(p)=0$. So the matrix
$D:=\begin{bmatrix}
\frac{\partial {x}'_1}{\partial\theta_1} &\cdots & \frac{\partial {x}'_1}{\partial\theta_d}\\
\vdots &~ &\vdots\\
\frac{\partial {x}'_{2n}}{\partial\theta_1} &\cdots &\frac{\partial {x}'_{2n}}{\partial\theta_d}
\end{bmatrix}(p,0)$ has full rank $d$, and we conclude that
$$
\det\left(\frac{\partial^2\Psi}{\partial\theta_j\partial\theta_k}\right)_{j,k=1}^d(p,p,0)\neq 0.
$$
The above discussion implies that the point $(x,y,\theta_1,\cdots,\theta_d)=(p,p,0,\cdots,0)$ satisfies the assumption in Proposition \ref{complex critical point}, where $p\in Y$ is fixed. 

Let $\tilde{\theta}_1(x,y),\cdots,\tilde{\theta}_d(x,y)$ be the solution of the equations 
$$
\left.\frac{\partial\tilde{\Psi}}{\partial\tilde{\theta}_j}\middle(x,y,\tilde{\theta}_1(x,y),\cdots,\tilde{\theta}_d(x,y)\right)=0,~j=1,\cdots,d,
$$
in a neighborhood of $(x,y)=(p,p)\in\mathbb{C}^{2n}$ with $\tilde{\theta}_j(x,y)=0$ at $(x,y)=(p,p)$ for all $j=1,\cdots,d$, where $\tilde{\Psi}$ ia an almost analytic extension of $\Psi$ in the variable $\theta_j$'s and $\tilde{\theta}_j$'s are allowed to be complex near $(0,\cdots,0)\in\mathbb{C}^d$. Accordingly, by complex stationary phase formula Proposition \ref{MS formula} and the basic properties of almost analytic extension, up to an element in $O(m^{-\infty})$, the torus equivariant Szeg\H{o} kernel $\Pi^{(0)}_{\leq\lambda,m,mp_1,\cdots,mp_d}(x,y)$ is
\begin{equation}
\label{coarse torus equivariant kernel}
    \begin{split}
    ~& (2\pi)^{-d}e^{im\tilde{\Psi}(x,y,\tilde{\theta}_1(x,y),\cdots,\tilde{\theta}_d(x,y))}\left(\det\frac{m\tilde{\Psi}''_{\tilde{\theta}\tilde{\theta}}(x,y,\tilde{\theta}_1(x,y),\cdots,\tilde{\theta}_d(x,y))}{2\pi i}\right)^{-\frac{1}{2}}\\
        &\tilde{\chi}\left(\tilde{\theta}_1(x,y),\cdots,\tilde{\theta}_d(x,y)\right) \tilde{A}(\widetilde{x'}(\tilde{\theta}_1(x,y),\cdots,\tilde{\theta}_d(x,y)),y,m).
    \end{split}
\end{equation}
We let 
$$
b(x,y,m):=\frac{\tilde{\chi}\left(\tilde{\theta}_1(x,y),\cdots,\tilde{\theta}_d(x,y)\right) \tilde{A}(\widetilde{x'}(\tilde{\theta}_1(x,y),\cdots,\tilde{\theta}_d(x,y)),y,m)}{\left(\det\frac{m\tilde{\Psi}''_{\tilde{\theta}\tilde{\theta}}(x,y,\tilde{\theta}_1(x,y),\cdots,\tilde{\theta}_d(x,y))}{2\pi i}\right)^{\frac{1}{2}}}\neq 0,
$$
and we can check that $b(x,y,m)\in S^{n-\frac{d}{2}}_{\rm loc}(1;D_p\times D_p)$. Moreover, for the asymptotic sum
$$
b(x,y,m)\sim\sum_{j=0}^\infty m^{n-\frac{d}{2}-j}b_j(x,y)~\text{in}~S^{n-\frac{d}{2}}_{{\rm loc}}(1;D_p\times D_p),
$$
we can find that for $p\in Y$, by construction of almost analytic functions, for some constant $C>0$,
\begin{equation}
    b_0(p,p)
    =\frac{1}{(2\pi)^{\frac{d}{2}}}{\frac{A_0(p,p)}{\left(\det\frac{{\Psi}''_{{\theta}{\theta}}(p,p,0,\cdots,0)}{i}\right)^{\frac{1}{2}}}}\\
    \geq C\frac{|\det\mathcal{L}_p|}{2^{\frac{d}{2}+1}\pi^{n+1+\frac{d}{2}}}>0.
\end{equation}
Here, we choose the branch as in Proposition \ref{MS formula} such that $\left(\det\frac{{\Psi}''_{{\theta}{\theta}}(p,p,0,\cdots,0)}{i}\right)^{\frac{1}{2}}>0$. To finish the proof of Theorem \ref{Main theorem 4}, it establish the followings:
\begin{proposition}
\label{Phase function property 0}
Let $f(x,y):=\tilde{\Psi}(x,y,\tilde{\theta}_1(x,y),\cdots,\tilde{\theta}_d(x,y))$, then ${\rm Im}f\geq 0$. Moreover,
$$
f(x,x)=0,~d_x f(x,x)=-\omega_0(x),~d_y f(x,x)=\omega_0(x),~b_0(x,x)>0~\text{for all}~x\in Y\cap D_p.
$$
\end{proposition}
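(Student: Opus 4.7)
\emph{Proof plan.} The statement packages four assertions about the Melin--Sj\"ostrand critical value $f$, and I will treat them in the natural order, each time reducing the claim to material already set up above. The first step is to pin down the critical $\tilde{\theta}$-values along the diagonal over $Y$. The computation that produced $\partial_{\theta_j}\Psi(p,p,0)=0$ above did not use anything special about $p$ beyond $p\in Y$: the BRT-coordinate calculation of $T_j(x)$ together with the identity $d_x\phi_-(x,y)|_{x=y}=-\omega_0(x)$ from (\ref{phase funciton and contact form}) produces
\[
\frac{\partial \Psi}{\partial \theta_j}(x,x,0) = \langle -\omega_0(x),T_j(x)\rangle - p_j
\]
for every $x$ near $p$. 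On $Y$ the moment-map relation $\langle \omega_0,T_j\rangle(x) = -p_j$ forces the right-hand side to vanish. By uniqueness in the implicit function theorem (shrinking $D_p$ if necessary) we conclude $\tilde{\theta}_j(x,x)=0$ for all $j$ and all $x \in Y\cap D_p$. Then $f(x,x)=\tilde{\Psi}(x,x,0)=\Psi(x,x,0)=\Phi(\mathring{x},\mathring{x})=0$ by the defining property of $\Phi$ recalled in Proposition \ref{asymptotic expansion of circle equiv Szego kernel}.

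For the differentials, I will apply the chain rule $d_{(x,y)}f = \partial_{(x,y)}\tilde{\Psi} + \partial_{\tilde{\theta}}\tilde{\Psi}\cdot d_{(x,y)}\tilde{\theta}$ together with the defining equation $\partial_{\tilde{\theta}}\tilde{\Psi}(x,y,\tilde{\theta}(x,y))=0$; this kills the second term and reduces $d_{(x,y)}f$ to $\partial_{(x,y)}\tilde{\Psi}$ at the critical point. At $(x,x)$ with $x \in Y\cap D_p$ the critical value $\tilde{\theta}(x,x)=0$ is real, so the evaluation coincides with $\partial_{(x,y)}\Psi(x,x,0)$. Since $x'|_{\theta=0}=x$ and $\mathring{x}'|_{\theta=0}=\mathring{x}$, a direct computation gives $d_x f(x,x)=dx_{2n+1}+\partial_{\mathring{x}}\Phi(\mathring{x},\mathring{x})\,d\mathring{x}$, which is $-\omega_0(x)$ by (\ref{phase funciton and contact form}). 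For $d_y f(x,x)$ I will differentiate the identity $\Phi(\mathring{x},\mathring{x})\equiv 0$ along the diagonal to obtain $\partial_{\mathring{y}}\Phi(\mathring{x},\mathring{x})=-\partial_{\mathring{x}}\Phi(\mathring{x},\mathring{x})$; substituting yields $d_y f(x,x) = -dx_{2n+1} - \partial_{\mathring{x}}\Phi(\mathring{x},\mathring{x})\,d\mathring{x} = \omega_0(x)$. The non-negativity ${\rm Im}\,f \geq 0$ is immediate from the inequality ${\rm Im}\,\tilde{f}(Z(w),w)\geq C_1|{\rm Im}\,Z(w)|^2$ in Proposition \ref{complex critical point}.

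Finally, the assertion $b_0(x,x)>0$ for $x \in Y\cap D_p$ is proved by rerunning at an arbitrary point of $Y\cap D_p$ the pointwise computation that already gave $b_0(p,p)>0$ just after (\ref{A(x,y,m)}). Pointwise on $Y$ one has $A_0(x,x)=(a_-)_0(x,x)=|\det\mathcal{L}_x|/(2\pi^{n+1})$, strictly positive by Assumption \ref{assumption 3}, while the denominator $(\det(\tilde{\Psi}_{\tilde{\theta}\tilde{\theta}}''(x,x,0)/i))^{1/2}$ is nowhere vanishing on $Y\cap D_p$ because the Hessian was shown above to be non-singular at the critical point. The only delicate point I anticipate is to verify that the branch of the square root prescribed by the homotopy in Theorem \ref{MS formula} remains the positive branch as $x$ varies over $Y\cap D_p$; this follows from continuity of $\tilde{\Psi}_{\tilde{\theta}\tilde{\theta}}''(x,x,0)$ in $x$ after shrinking $D_p$ so that the homotopy stays in ${\rm GL}(d,\mathbb{C})$ uniformly in $x$.
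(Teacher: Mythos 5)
Your proof is correct, but for the heart of the proposition --- passing from the single point $p$ to all of $Y\cap D_p$ --- you take a genuinely different route from the paper. You observe that the first-order condition $\frac{\partial\Psi}{\partial\theta_j}(x,x,0)=\langle-\omega_0(x),T_j(x)\rangle-p_j$ is exactly the moment-map equation, so the real point $\theta=0$ is the (unique, by the implicit function theorem) critical point for \emph{every} $x\in Y\cap D_p$, and you then evaluate $f$, $d_xf$, $d_yf$ and $b_0$ directly at this real critical point, using $d_x\phi_-(x,y)|_{x=y}=-\omega_0(x)$ and the diagonal identity $\Phi(\mathring{x},\mathring{x})\equiv 0$ to handle the differentials. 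The paper instead performs the direct computation only at the center $(p,p)$ of the BRT patch and, for an arbitrary $q\in Y\cap D_p$, runs an indirect comparison: it introduces a second patch $D_q$, writes the same kernel as $e^{imf_1}b_1$ there, and extracts $f(q,q)=0$, $d(f-f_1)(q,q)=0$ and $b_0(q,q)=(b_1)_0(q,q)>0$ from the $m\to\infty$ behavior of the two representations. Your argument is shorter and more conceptual (it makes visible that $Y$ is precisely the locus where the torus phase is stationary on the diagonal), and it avoids the second patch and the limit manipulations; the paper's argument is a more robust ``uniqueness of phase and leading symbol'' device that never needs to locate the critical point and automatically inherits $b_0(q,q)>0$ from the computation at the center of $D_q$, whereas your direct evaluation of $b_0(x,x)$ forces you to track the cutoffs ($\chi_3$, $\chi_1$, $\chi$ must equal $1$ at the real critical points for $(x,x)$, which requires shrinking $D_p$) and the continuity of the square-root branch --- points you correctly flag. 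One small remark: the step where the chain rule ``kills the second term'' is rigorously the displayed estimate in Proposition \ref{complex critical point}, which is exact here precisely because $\tilde{\theta}(x,x)=0$ is real on $Y\cap D_p$; it is worth saying so explicitly.
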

\begin{proof}
First of all, by Proposition \ref{complex critical point} and \ref{MS formula}, ${\rm Im}f\geq 0$ holds. Second, for $p\in Y$ and a small BRT patch $D_p$ near $p$, by the construction of almost analytic extension and (\ref{function Phi under BRT}), we can find
\begin{equation}
    \label{f(p,p)=0}
    f(p,p)=\tilde{\Psi}(p,p,0)=\Phi(0,0)=0.
\end{equation}
By continuity, we may assume that $|f(x,y)|<\frac{1}{2}$ on $D_p\times D_p$ by taking $D_p$ small enough. Also, for $k=1,\cdots,2n+1$,
\begin{equation*}
        \frac{\partial f}{\partial x_k}(p,p)
        =\frac{\partial\tilde{\Psi}}{\partial x_k}(p,p,0)+\sum_{l=1}^d\frac{\partial\tilde{\Psi}}{\partial\tilde{\theta}_l}(p,p,0)\frac{\partial\tilde{\theta}}{\partial x_k}(p,p,0)
        =\frac{\partial\Phi}{\partial x_k}(0,0)
        =
        \begin{cases}
        0:k=1,\cdots,2n.\\
        1:k=2n+1.
        \end{cases}
\end{equation*}
and similarly 
\begin{equation*}
        \frac{\partial f}{\partial y_k}(p,p)
        =
        \begin{cases}
        0:k=1,\cdots,2n.\\
        -1:k=2n+1.
        \end{cases}
\end{equation*}
We can check that on $D_p$, $\omega_0(x)=-dx_{2n+1}+i\sum_{j=1}^n\left(\frac{\partial\phi}{\partial z_j}dz_j-\frac{\partial\phi}{\partial d\overline{z}_j}d\overline{z}_j\right)$, and hence 
\begin{equation}
    \label{df}
    d_x f(p,p)=-\omega_0(p),~d_y f(p,p)=\omega_0(p).
\end{equation}
Second, for all $x\in Y\cap D_p$, we now prove
\begin{equation*}
f(x,x)=0~\text{and}~d_x f(x,x)=-\omega_0(x),~d_y f(x,x)=\omega_0(x).
\end{equation*}
For $p\in Y$ and a small BRT patch 
$D_{p}$ near $p$, if we take any other $q\in Y\cap D_p$ and another small BRT patch $D_{q}$ near $q$, by the discussion in this section, we can write
$$
\Pi^{(0)}_{\leq\lambda,m,mp_1,\cdots,mp_d}(x,y)\equiv e^{im f(x,y)}b(x,y,m)~{\rm mod}~O(m^{-\infty})~\text{on }~D_{p}\times D_{p},
$$
where $f(p,p)=0$, $df(p,p)=0$, $|f(x,y)|<\frac{1}{2}$ on $D_p\times D_p$,
$$
b(x,y,m)\sim\sum_{j=0}^\infty m^{n-\frac{d}{2}-j}b_j(x,y)~\text{in}~S^{n-\frac{d}{2}}_{\rm loc}(1;D_{p}\times D_{p}),~b_j(x,y)\in\mathscr{C}^\infty(D_{p}\times D_{p}),~b_0(p,p)>0
$$
and
$$
\Pi^{(0)}_{\leq\lambda,m,mp_1,\cdots,mp_d}(x,y)\equiv e^{im f_1(x,y)}b_1(x,y,m)~{\rm mod}~O(m^{-\infty})~\text{on}~D_{q}\times D_{q},
$$
where $f_1(q,q)=0$, $df_1(q,q)=0$, $|f_1(x,y)|<\frac{1}{2}$ on $D_q\times D_q$,
$$
b_1(x,y,m)\sim\sum_{j=0}^\infty m^{n-\frac{d}{2}-j}(b_1)_j(x,y)~\text{in}~S^{n-\frac{d}{2}}_{\rm loc}(1;D_{q}\times D_{q}),~(b_1)_j(x,y)\in\mathscr{C}^\infty(D_{q}\times D_{q}),~(b_1)_0(q,q)>0.
$$
By consinuity, we may assume that $|b_0(x,y)|>0$ on $D_p\times D_p$. We arrange
\begin{equation}
\label{difference of kernel}
    e^{imf(x,y)}b(x,y,m)=e^{imf_1(x,y)}b_1(x,y,m)+R(x,y,m),~R(x,y,m)=O(m^{-\infty}).
\end{equation}
into
\begin{equation}
    e^{im(f-f_1)(x,y)}b(x,y,m)=b_1(x,y,m)+e^{-imf_1(x,y)}R(x,y,m),
\end{equation}
and after evaluating at the point $(x,y)=(q,q)$, we get
\begin{equation*}
    e^{im(f-f_1)(q,q)}b(q,q,m)=b_1(q,q,m).
\end{equation*}
Since
\begin{equation*}
    \lim_{m\to\infty} e^{-m{\rm Im}f(q,q)}=\lim_{m\to\infty}\frac{|b_1(q,q,m)|}{|b(q,q,m)|}=\frac{|(b_1)_0(q,q)|}{|b_0(q,q)|},
\end{equation*}
which is a non-zero finite number, we can conclude that ${\rm Im}f(q,q)=0$. Moreover, notice that
\begin{equation*}
    \lim_{m\to\infty}e^{imf(q,q)}=\lim_{m\to\infty}\frac{b_1(q,q,m)}{b(q,q,m)}=\frac{(b_1)_0(q,q)}{b_0(q,q)},
\end{equation*}
i.e.~the limit exists. However, the limit
\begin{equation*}
    \lim_{m\to\infty}e^{imf(q,q)}=\lim_{m\to\infty}e^{im{\rm Re}f(q,q)}=\lim_{m\to\infty}\left(\cos(m{\rm Re}f(q,q))+i\sin(m{\rm Re}f(q,q))\right)
\end{equation*}
does not exists if $|{\rm Re} f(q,q)|<\frac{1}{2}$, ${\rm Re}f(q,q)\neq 0$. Hence, we conclude that 
\begin{equation}
\label{f(x,x)=0}
    f(x,x)=0,~\text{for all}~x\in Y\cap D_p.
\end{equation}
Next, take derivative in both sides of (\ref{difference of kernel}) with respect to $x_j$,
$j=1,\cdots,2n+1$, and evaulate at $(x,y)=(q,q)$, then we have
\begin{equation*}
    im\frac{\partial}{\partial x_j}(f-f_1)(q,q)b(q,q,m)+\frac{\partial}{\partial x_j}b(q,q,m)=\frac{\partial}{\partial x_j}b_1(q,q,m)+\mathring{R}(x,y,m),
\end{equation*}
where $\mathring{R}(x,y,m):=\frac{\partial}{\partial x_j}R(q,q,m)-im\frac{\partial}{\partial x_j}f_1(q,q)R(q,q,m)=O(m^{-\infty})$. Therefore, for some constant $C>0$,
$$
\left|\frac{\partial}{\partial x_j}(f-f_1)(q,q)\right|=\lim_{m\to\infty}\frac{\left|\left(\frac{\partial}{\partial x_j}(b_1-b)+\mathring{R}\right)(q,q,m)\right|}{m|b(q,q,m)|}\leq\lim_{m\to\infty}\frac{C m^{n-\frac{d}{2}}}{|b_0(q,q)|m^{n-\frac{d}{2}+1}}=0
$$
Hence, for all $q\in Y\cap D_p$,
\begin{equation}
    \label{partial x f-f_1}
    \frac{\partial}{\partial x_j}\left(f-f_1\right)(q,q)=0,~j=1,\cdots,2n+1,
\end{equation}
Similarly, for all $q\in Y\cap D_p$,
\begin{equation}
    \label{partial y f-f_1}
    \frac{\partial}{\partial y_j}\left(f-f_1\right)(q,q)=0,~j=1,\cdots,2n+1.
\end{equation}
Combine (\ref{df}), (\ref{partial x f-f_1}) and (\ref{partial y f-f_1}), we establish
\begin{equation}
    d_x f(x,x)=-\omega_0(x),~d_y f(x,x)=\omega_0(x)~\text{for all}~x\in Y\cap D_p.
\end{equation}
In the last, from (\ref{difference of kernel}), by evaluating at the point $(x,y)=(q,q)$, we can find
$$
b(q,q,m)=b_1(q,q,m)+R(q,q,m),
$$
where $R(q,q,m)=O(m^{-\infty})$. Accordingly, 
$$
1=\lim_{m\to\infty}\frac{b(q,q,m)}{b_1(q,q,m)}=\frac{b_0(q,q)}{(b_1)_0(q,q)}.
$$
Thus, we can conclude that $b_0(q,q)=b_1(q,q)>0$, and since this holds for all $q\in Y\cap D_p$, we complete the proof of this proposition.
\end{proof}
Combine all the discussion in this section, the proof of Theorem \ref{Main theorem 3} is completed.
\section*{Acknowledgement}
The methods of microlocal analysis on CR manifolds with group action used in this work are marked by the influence of Professor Chin-Yu Hsiao and Professor George Marinescu. The author in particular wishes to express his hearty thanks to them for discussions on similar subjects and for giving the idea of the proof.

\bibliographystyle{amsxport}
\bibliography{reference}
\end{document}